\newtheorem{thm}{Theorem}[section]
\newtheorem*{thm*}{Main Theorem}
\newtheorem{cor}[thm]{Corollary}
\newtheorem*{cor*}{Corollary}
\newtheorem{lem}[thm]{Lemma}
\newtheorem{klem}[thm]{Key Lemma}
\newtheorem{prop}[thm]{Proposition}
\theoremstyle{definition}
\newtheorem{defn}[thm]{Definition}
\newtheorem{rem}[thm]{Remark}
\newtheorem{para}[thm]{--}
\newcommand\cyr{%
\renewcommand\rmdefault{wncyr}%
\renewcommand\sfdefault{wncyss}%
\renewcommand\encodingdefault{OT2}%
\normalfont\selectfont}
\DeclareTextFontCommand{\textcyr}{\cyr}
\newcommand{\angl}[1]{\left\langle #1\right\rangle}
\newcommand{\tq}{\: | \:}
\newcommand{\pt}{\mbox{ for all }}
\newcommand{\qqet}{\qquad\mbox{and}\qquad}
\DeclareMathOperator{\End}{End}
\DeclareMathOperator{\Gal}{Gal}
\DeclareMathOperator{\GL}{GL}
\DeclareMathOperator{\Hom}{Hom}
\DeclareMathOperator{\id}{id} 
\DeclareMathOperator{\im}{im} 
\DeclareMathOperator{\Lie}{Lie} %
\DeclareMathOperator{\red}{red}
\DeclareMathOperator{\rk}{rank}
\DeclareMathOperator{\spec}{spec}
\newcommand{\hotimes}{\:\widehat\otimes\:}
\newcommand{\Tell}{\textup{T}_\ell}
\newcommand{\TZ}{\textup{T}_{\mathbb Z}}
\newcommand{\Vell}{\textup{V}_{\!\ell}}
\newcommand{\Vnot}{\textup{V}_{\!0}}
\newcommand{\IC}{\mathbb{C}}
\newcommand{\IL}{\mathbb{L}}
\newcommand{\IP}{\mathbb{P}}
\newcommand{\IQ}{\mathbb{Q}}
\newcommand{\IR}{\mathbb{R}}
\newcommand{\IZ}{\mathbb{Z}}
\newcommand{\cO}{\mathcal O}
\newcommand{\cHom}{\mathcal Hom}
\newcommand{\fh}{\mathfrak h}
\newcommand{\fl}{\mathfrak l}
\newcommand{\fp}{\mathfrak p}
\renewcommand{\to}{\xrightarrow{\quad}}
\renewcommand{\mapsto}{\longmapsto}
\date{\today}
\title{Detecting linear dependence on a simple abelian variety}%
\author{Peter Jossen}%
\begin{document}

\begin{abstract}
Let $A$ be a geometrically simple abelian variety over a number field $k$, let $X$ be a subgroup of $A(k)$ and let $P\in A(k)$ be a rational point. We prove that if $P$ belongs to $X$ modulo almost all primes of $k$ then $P$ already belongs to $X$.
\end{abstract}

\maketitle%

\section*{Introduction}

\begin{par}
Let $A$ be an abelian variety over a number field $k$, let $X$ be a subgroup of the Mordell--Weil group $A(k)$ and let $P \in A(k)$ be a rational point. We want to ``decide'' whether $P$ belongs to $X$ or not. To do so, we choose a model of $A$ over an open subscheme $U$ of $\spec\cO_k$, where $\cO_k$ denotes the ring of integers of $k$. Because $A$ is proper, $P$ and all points in $X$ extend to $U$--points. For closed points $\fp\in U$ we can consider the reduction map
$$\red_\fp: A(U) \to A(\kappa_\fp)$$
where $\kappa_\fp := \cO_k/\fp$ denotes the residue field at $\fp$. A necessary condition for $P$ belonging to $X$ is then that for all closed points $\fp \in U$ the reduction of $P$ modulo $\fp$ belongs to the reduction of $X$ modulo $\fp$. Wojciech Gajda asked in 2002 whether this condition is also sufficient. This problem was named the problem of \emph{detecting linear dependence}.
\end{par}

\begin{par}
In a joint work with Antonella Perucca (\cite{JPer09}) we have shown that the answer to Gajda's question is negative in general by giving an explicit counterexample (Banaszak and Kras\'on have found independently such a counterexample). The abelian variety in our counterexample is a power of an elliptic curve. Our main result in this note is:
\end{par}

\vspace{4mm}
\begin{par}{\bf Main Theorem.}\emph{
Let $A$ be a geometrically simple abelian variety over a number field $k$, let $X$ be a subgroup of $A(k)$ and let $P\in A(k)$ be a rational point. If the set of places $\fp$ of $k$ for which $\red_\fp(P)$ belongs to $\red_\fp(X)$ has natural density 1, then $P$ belongs to $X$.}
\end{par}

\vspace{4mm}
\begin{par}
By saying that $A$ is \emph{geometrically simple} we mean that $A$ has no other abelian subvariety other than $0$ and itself defined over an algebraic closure $\overline k$ of $k$. The statement of the theorem is new even in the case where $A$ is an elliptic curve. However, many partial results in this direction have already been obtained, let us mention a few of them. The earliest result on this problem is due to Schinzel (\cite{Schin75}), who showed the analogue of our Main Theorem for the multiplicative group in place of an abelian variety. Weston has shown that for an abelian variety with a commutative endomorphism ring the statement of our theorem holds up to a torsion ambiguity (\cite{West03}), and Kowalski has shown the statement of our theorem to hold for an elliptic curve and a cyclic subgroup (\cite{Kowa03}). Banaszak, Gajda, G\'ornisiewicz and Kraso\'n have proven similar statements under various technical assumptions on the abelian variety and the subgroup (\cite{Bana05, Gajd08, Bana09}), and Perucca has some similar results for products of tori and abelian varieties (\cite{Peru08}).
\end{par}

\vspace{2mm}
\begin{par}
Here is a quick overview on the main ideas of the proof. Let $U$ be an open subscheme of $\spec\cO_k$, where $\cO_k$ is the ring of integers of the number field $k$. A 1--motive over $U$ is a morphism of fppf sheaves
$$M=[u:Y\to G]$$
over $U$ where $Y$ is \'etale locally constant, locally isomorphic to a finitely generated free group, and where $G$ is a semiabelian scheme over $U$. By a semiabelian scheme over $U$ we understand in this paper an extension over $U$ of an abelian scheme scheme by a torus. In the case $Y$ is constant defined by a finitely generated free group which we still denote by $Y$, morphisms of fppf--sheaves $Y\to G$ are the same as homomorphisms of groups $Y\to G(U)$. Given a semiabelian scheme $G$ over $U$ and a finitely generated subgroup $X$ of $G(U)$ we can choose a 1--motive $[Y\to G]$ over $U$ where $Y$ is a constant sheaf defined by a finitely generated free group, such that $u(Y)=X$. In the case $X$ is torsion free on can just take $Y=X$ and for $u$ the inclusion.\\
With any 1--motive $M$ over $U$ and prime number $\ell$ invertible on $U$ is associated a locally constant $\ell$--adic sheaf $\Tell M$ on $U$, which can also be viewed as a finitely generated free $\IZ_\ell$--module equipped with a continuous action of the absolute Galois group of $k$ which is unramified in $U$. For a set $S$ of closed points of $U$ of density 1 we consider the group
$$H^1_S(U,\Tell M) := \ker\bigg(H^1(U,\Tell M)\to \prod_{\fp\in S}H^1(\kappa_\fp,\Tell M)\bigg)$$
where $\kappa_\fp = \cO_k/\fp$ denotes the residue field at $\fp$. Using Kummer theory we will show that the vanishing of the groups $H^1_S(U,\Tell M)$ for all $\ell$ is the obstruction for the local--global principle of the Main Theorem to hold. As observed by Serre and Tate it is essentially a consequence of Chebotarev's Density Theorem that the group $H^1_S(U,\Tell M)$ is isomorphic to the group
$$H^1_\ast(L^M,\Tell M) := \ker\bigg(H^1(L^M,\Tell M)\to \prod_{C\leq L^M}H^1(C,\Tell M)\bigg)$$
where $L^M$ denotes the image of the Galois group $\Gal(\overline k|k)$ in the group of automorphisms of $\Tell M$ and where the product ranges over all subgroups $C$ of $L^M$ topologically generated by one element. In the case where $G$ is an abelian variety we will determine the group $L^M$ up to comensurability, and modulo the Mumford--Tate conjecture. This will allow us then, in the case where $A$ is geometrically simple, to gain sufficient control on $H^1_\ast(L^M,\Tell M)$ in order to prove the Main Theorem.
\end{par}

\vspace{4mm}%
\begin{par}{\bf Acknowledgments:}
Large parts of this article are taken from my Ph.D. thesis directed by Tam\'as Szamuely. I wish to thank him for his help, encouragement and support during this work. Many thanks go to Antonella Perucca who considerably helped to simplify some of the arguments. I am greatful to G. Banaszak and W. Gajda for very useful correspondance and to G. Banaszak and P. Kraso\'n for pointing out a mistake in an earlier version of this text. I acknowledge financial support provided by the DFG-Forschergruppe "Algebraische Zykel und L-Funktionen", Regensburg.
\end{par}

\newpage
\tableofcontents

\vspace{10mm}
\section{On 1--motives and Galois representations}

\begin{par}
In this section I recall what 1--motives are and how to attach $\ell$--adic Galois representations to them. Then I show how these representations are linked with the local--global problem of detecting linear dependence.
\end{par}

\vspace{4mm}
\begin{para}\label{Par:IntroMotive}
Let $S$ be a noetherian regular scheme. A \emph{1--motive} $M$ over $S$ is (\cite{Deli74}, Section 10) a two--term complex of fppf--sheaves over $S$, concentrated in degrees $-1$ and $0$ 
$$M := [Y \xrightarrow{\:\:u\:\:} G]$$
where $Y$ is \'etale locally isomorphic to a finitely generated free $\IZ$--module and where $G$ is representable by a semiabelian scheme over $S$. A \emph{morphism of 1--motives} is a morphism of complexes of fppf--sheaves. One can view $M$ as an object of the derived category of fppf--sheaves on $S$. Applying the derived global section functor $\IR\Gamma(S,-)$ and taking homology yields the flat cohomology groups $H^i(S,M)$. There is a long exact sequence relating the cohomology of $G$ and $Y$ with that of $M$ starting with
$$0\to H^{-1}(S,M) \to H^0(S,Y) \to H^0(S,G) \to H^0(S,M) \to H^1(S,Y) \to \cdots$$
One can also view $M$ as an object of the derived category of \'etale sheaves and obtain \'etale cohomology groups. However, since $G$ and $Y$ are both smooth over $S$, these are canonically isomorphic.
\end{para}

\vspace{4mm}
\begin{para}{\bf Notation:}
For a commutative group $C$, a prime number $\ell$ and an integer $i\geq 0$, we introduce the following notation: $C[\ell^i]$ denotes the group of elements of $C$ of order $\ell^i$, and $C[\ell^\infty]$ denotes the group of elements of $C$ of order any power of $\ell$. We write
$$C\hotimes\IZ_\ell := \lim_{i\geq 0} C/\ell^iC \qqet \Tell C := \lim_{i\geq 0} C[\ell^i]$$
for the $\ell$--adic completion and the $\ell$--adic Tate module of $C$. These groups have a natural $\IZ_\ell$--module structure. There is a canonical morphism $C \to C \hotimes \IZ_\ell$ whose kernel is the intersection of the groups $\ell^iC$ over $i \geq 0$. Remark that if $C$ is finitely generated, we may identify $C\hotimes\IZ_\ell \cong C \otimes_\IZ \IZ_\ell$.
\end{para}

\vspace{4mm}
\begin{para}\label{Par:IntroTateMod}
Following Deligne (\emph{loc.cit.}) we now construct the $\ell$--adic Tate module associated with (or $\ell$--adic realisation of) a 1--motive $M=[u:Y \to G]$ over $S$, where $\ell$ is any prime number invertible on $S$. We shall consider the derived tensor product $M\otimes^\IL\IZ/\ell^i\IZ$, or alternatively (that amounts to the same) the cone of the multiplication--by--$\ell^i$ map on the complex $M$. The homology of $M \otimes^\IL \IZ/\ell^i\IZ$ is concentrated in degree $-1$ because $Y$ is torsion free and $G$ is divisible as a sheaf. The homology group
$$T_{\IZ/\ell^i\IZ}(M) := H^{-1}(M \otimes^\IL \IZ/\ell^i\IZ)$$
is a finite flat group scheme over $S$ annihilated by $\ell^i$, and because we suppose that $\ell$ is invertible on $S$ it is locally constant. We have a natural morphism $T_{\IZ/\ell^{i+1}\IZ}(M) \to T_{\IZ/\ell^i\IZ}(M)$ induced by the map $\IZ/\ell^{i+1}\IZ\to \IZ/\ell^i\IZ$ for all $i\geq0$. The formal limit with respect to these maps
$$\Tell M := \lim_{i\geq 0} T_{\IZ/\ell^i\IZ}(M)$$
is a locally constant $\ell$--adic sheaf on $S$, called \emph{the $\ell$--adic Tate module of $M$}. This construction is functorial in $M$ so we look at $\Tell(-)$ as being a functor from the category of 1--motives over $S$ to the category of $\ell$--adic sheaves over $S$. The cohomology of $\Tell M$ over $S$ is then defined accordingly as
$$H^r(S,\Tell M) := \lim_{i\geq 0}H^{r-1}(S, M \otimes^\IL \IZ/\ell^i\IZ)$$
These cohomology groups have a natural $\IZ_\ell$--module structure. There are natural short exact sequences as follows. The exact ``Kummer" triangle $M \to M \to M\otimes^\IL \IZ/\ell^i\IZ$ induces a long exact sequence of cohomology groups from where we can cut out the piece
$$0\to H^{r-1}(S,M)/\ell^i H^r(S,M) \to H^{r-1}(S,M\otimes^\IL \IZ/\ell^i\IZ)\to H^r(S,M)[\ell^i] \to 0$$
Taking limits over $i$ and observing that the left hand limit system satisfies the Mittag--Leffler condition, we find a short exact sequence of $\IZ_\ell$--modules
$$0\to H^{r-1}(S,M)\hotimes\IZ_\ell \to H^r(S,\Tell M)\to \Tell H^r(S,M) \to 0$$
Naturality in $M$ and $S$ is clear from the construction.
\end{para}

\vspace{4mm}
\begin{para}\label{Par:FundamentalSetup}
For the rest of this section we fix a number field $k$ with algebraic closure $\overline k$ and absolute Galois group $\Gamma := \Gal(\overline k|k)$, a nonempty open subscheme $U$ of $\spec\cO_k$ where $\cO_k$ denotes the ring of integers of $k$, and a prime number $\ell$ invertible on $U$. We write $k_U$ for the maximal subextension of $\overline k|k$ unramified in $U$, and set $\Gamma_U := \Gal(k_U|k)$. In other words, $\Gamma_U = \pi_1(U,u)$ is the \' etale fundamental group of $U$ with respect to the base point $u=\spec\overline k$.
\end{para}

\vspace{4mm}
\begin{para}\label{Par:GrothEquivalenceY}
By  Grothendieck's theory of the fundamental group (see for example \cite{Szam09}, Theorem 5.4.2), there is an equivalence of categories
$$\bigg\{\begin{minipage}{40mm}
locally constant $\IZ$--con-\\structible sheaves on $U$
\end{minipage}
\bigg\} \longleftrightarrow \bigg\{\begin{minipage}{35mm}
finitely generated\\ discrete $\Gamma_U$--modules
\end{minipage}
\bigg\}$$
given by the functor that sends such a sheaf $F$ on $U$ to the $\Gamma_U$--module $F(\overline k)$. In particular, to give a locally constant sheaf $Y$ locally isomorphic to a finitely generated free group is the same, via this equivalence of categories, as to give a finitely generated free group $Y$ together with a continuous action of $\Gamma_U$. Continuity means that the stabiliser of $Y$ in $\Gamma_U$ is an open subgroup of finite index. As a consequence, a 1--motive over $U$ is given by the following data: A finitely generated free group $Y$ together with a continuous action of $\Gamma_U$, a semiabelian scheme $G$ over $U$ and a morphism of $\Gamma_U$--modules $u:Y\to G(k_U)$.
\end{para}

\vspace{4mm}
\begin{para}\label{Par:GrothEquivalenceEll}
The equivalence of categories given in \ref{Par:GrothEquivalenceY} also explains why $\ell$--adic sheaves on $U$ are essentially the same as $\ell$--adic representations of $k$ unramified in $U$. Indeed, this equivalence of categories induces an equivalence
$$\hspace{12mm}\bigg\{\begin{minipage}{40mm}
locally constant $\ell$--adic\\ sheaves on $U$
\end{minipage}
\bigg\} \longleftrightarrow \bigg\{\begin{minipage}{50mm}
finitely generated $\IZ_\ell$--modules \\with continuous $\Gamma_U$--action
\end{minipage}
\bigg\}$$
given by the functor that sends a locally constant $\ell$--adic sheaf on $U$, given by a formal limit system $(T_i)_{i=0}^\infty$ to the $\IZ_\ell$--module $\lim T_i(\overline k)$. A quasi inverse to this functor is can be defined as follows: Given a finitely generated $\IZ_\ell$--module $T$ with continuous $\Gamma_U$--action, one associates with it the formal limit system $(T_i)_{i=0}^\infty$ where $T_i$ is the locally constant sheaf on $U$ corresponding to the finite $\Gamma_U$--module $T/\ell^iT$. 
\end{para}

\vspace{4mm}
\begin{para}\label{Par:TellMExplicit}
Using the equivalence of categories introduced in \ref{Par:GrothEquivalenceEll}, we can give an explicit description of the Tate module of a 1--motive $M=[u:Y\to G]$ over $U$ in terms of Galois representations. For all $i\geq 0$ we have finite Galois modules
$$T_{\IZ/\ell^i\IZ}(M)(\overline k) \cong \frac{\{(y,P)\in Y\times G(\overline k)\tq u(y) = \ell^iP\}}{\{(\ell^iy,u(y))\tq y \in Y\}}$$
which are unramified in $U$. The limit over $i$ of these finite Galois modules is then the Tate module of $M$ seen as a Galois module. Explicitly, an element $x \in \Tell M$ is given by a sequence $(y_i,P_i)_{i=0}^\infty$ where the $y_i$'s are elements of $Y$, the $P_i$'s are elements of $G(\overline k)$, and where it is required that
$$u(y_i) =  \ell^iP_i \qqet  \ell P_i - P_{i-1} = u(z_i) \qqet y_i -y_{i-1} = \ell^{i-1}z_i$$
for some elements $z_i \in Y$. Two sequences $(y_i,P_i)_{i=0}^\infty$ and $(y_i',P_i')_{i=0}^\infty$ represent the same element if and only if for each $i \geq 0$, there exists a $z_i \in Y$ such that $\ell^iz_i = y_i-y_i'$ and $u(z_i) = P_i-P_i'$.
\end{para}

\vspace{4mm}
\begin{prop}
Let $T = (T_i)_{i=0}^\infty$ be a locally constant $\ell$--adic sheaf on $U$ corresponding via the above equivalence to a $\IZ_\ell$--module with continuous $\Gamma_U$--action (also denoted by $T$). For $r = 0,1$, the natural maps
$$H^r(\Gamma_U, T) \to H^r(U,T)$$
are isomorphisms, where $H^r(\Gamma_U, T)$ is defined by means of continuous cocycles.
\end{prop}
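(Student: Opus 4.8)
The plan is to reduce the statement to a comparison between étale cohomology of the scheme $U$ and Galois cohomology of the fundamental group $\Gamma_U = \pi_1(U,u)$, and then to invoke the Leray–Hochschild–Serre spectral sequence for the universal pro-covering of $U$. First I would pass to the finite level: write $T = \lim_i T_i$ with each $T_i = T/\ell^i T$ a finite locally constant sheaf on $U$ corresponding to a finite discrete $\Gamma_U$-module. Since both $H^r(U,-)$ and $H^r(\Gamma_U,-)$ commute with the relevant limits in the appropriate sense — the cohomology $H^r(U,T)$ is defined in \ref{Par:IntroTateMod} as a limit of $H^{r-1}(U, M\otimes^{\IL}\IZ/\ell^i)$, and continuous Galois cohomology $H^r(\Gamma_U,T)$ is likewise a limit of $H^r(\Gamma_U,T_i)$, with the $\lim^1$ terms controlled by Mittag–Leffler since the transition maps on $H^0$ are surjective up to bounded torsion — it suffices to prove the proposition for a single finite locally constant sheaf $F$ on $U$ with associated finite $\Gamma_U$-module $F(\overline k)$, and then take limits.

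For a finite locally constant sheaf $F$, the key input is the spectral sequence associated to the pro-étale (equivalently, in the locally constant setting, the Galois) covering $\spec k_U \to U$: one has
$$H^p\big(\Gamma_U, H^q(\spec k_U \times_U (\text{--}), F)\big) \Rightarrow H^{p+q}(U,F),$$
and the edge maps give exactly the natural maps $H^r(\Gamma_U, F(\overline k)) \to H^r(U,F)$ in low degrees. So the proposition amounts to the vanishing of $H^q$ of the base $\spec k_U$ (more precisely, of the relevant cohomology of the scheme $\spec\cO_{k_U}$-type object, i.e. the normalization of $U$ in $k_U$) with coefficients in $F$ for $q = 1$, together with $H^0$ being just $F(\overline k)$. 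The degree-$0$ statement is trivial ($H^0(U,F) = F(\overline k)^{\Gamma_U} = H^0(\Gamma_U,F(\overline k))$ since $k_U$ by construction trivializes — in the sense of: contains all the relevant residue extensions for — the action, actually $F$ becomes constant on a finite cover dominated by $\spec\cO_{k_U}$). For $r=1$ one gets an exact sequence $0 \to H^1(\Gamma_U, F(\overline k)) \to H^1(U,F) \to H^1(\widetilde U, F)^{\Gamma_U}$, where $\widetilde U$ is the inverse limit of the finite étale covers of $U$ trivializing $F$, so it remains to see the last term vanishes.

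The main obstacle — and the only real content — is therefore showing $H^1(\widetilde U, F) = 0$, i.e. that the "universal cover" direction limit of $U$ has no first cohomology with finite coefficients. Concretely: if $U'$ is a connected finite étale cover of $U$ on which $F$ becomes the constant sheaf $\underline{\IZ/\ell^i}$, then $H^1(U', \IZ/\ell^i) = \Hom(\pi_1(U'), \IZ/\ell^i)$, and the limit over all such $U'$ (ordered by the Galois correspondence) of these groups is $0$ because every homomorphism $\pi_1(U') \to \IZ/\ell^i$ factors through a finite quotient, hence is killed upon passing to the corresponding further cover. This is exactly the statement that the pro-finite fundamental group "sees" all of $H^1$, i.e. that $\varinjlim_{U'} H^1(U',\IZ/\ell^i) = 0$; equivalently $H^1$ of the universal pro-cover vanishes. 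I would phrase this via the Hochschild–Serre spectral sequence directly for $\Gamma_U$ acting, noting $H^1$ of the "geometric" object $\spec k_U$ with finite coefficients is zero since $k_U$ is, for the purposes of $F$, a separably closed base — more carefully, since any torsor under $F$ over the tower splits after a further finite cover, which is already inside the tower. Once $H^1(\widetilde U, F) = 0$ is established, the five-term exact sequence of the spectral sequence gives the isomorphisms for $r=0,1$ immediately, and re-assembling the $\ell$-adic limit (again using Mittag–Leffler) completes the proof.
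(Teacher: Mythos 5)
Your proof is correct, but it takes a genuinely different route from the paper at the finite level. The paper simply cites Proposition~II.2.9 of Milne's \emph{Arithmetic Duality Theorems} for the isomorphism $H^r(U,F)\cong H^r(\Gamma_U,F)$ when $F$ is a finite locally constant sheaf, and then reduces to a limit argument, invoking Serre's Proposition~7 (continuous $H^1$ commutes with inverse limits of compact modules) to handle the $r=1$ case. You instead unwind the finite-level statement: you set up the Hochschild--Serre spectral sequence for the pro-covering given by the normalization $\widetilde U$ of $U$ in $k_U$, and you argue that $H^1(\widetilde U,F)=0$ because $H^1(\widetilde U,F)=\varinjlim_{H}\Hom_{\mathrm{cont}}(H,F(\overline k))$ and every homomorphism from an open subgroup $H\le\Gamma_U$ to a finite group has open kernel, hence dies further up the tower. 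This is essentially Milne's own proof being reconstructed rather than cited, which is a perfectly legitimate choice; it makes the argument self-contained at the cost of length. Two minor points of imprecision worth tightening: first, your description of the limit step (``transition maps on $H^0$ surjective up to bounded torsion'') is not the standard Mittag--Leffler hypothesis --- what you actually need, and what suffices, is simply that the groups $H^0(\Gamma_U,T_i)$ are finite, so any decreasing chain of images stabilizes; this is precisely the compactness used in Serre's Proposition~7. Second, the commutation $H^1(\widetilde U,F)=\varinjlim_{U'}H^1(U',F)$ requires the standard passage-to-the-limit result for \'etale cohomology over a cofiltered system of affine schemes with affine transition maps; this applies here since the $U'$ are open in spectra of rings of integers (hence affine), but it deserves a word. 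With these clarifications the argument is complete and matches the statement for $r=0,1$.
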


\begin{proof}
From Proposition II.2.9 of \cite{MilneADT} we know that if $F$ is a finite locally constant sheaf of order a power of $\ell$ on $U$, then we have canonical isomorphisms $H^r(U,F) \cong H^r(\Gamma_U, F)$ for all $r \geq 0$. Cohomology of $\ell$--adic sheaves over $U$ commutes with limits by definition. It remains to prove that if $T$ is a finitely generated $\IZ_\ell$--module with $\Gamma_U$--action, then the natural map
$$H^r(\Gamma_U, T) \to \lim_{i\geq 0} H^r(\Gamma_U, T/\ell^i T)$$
is an isomorphism for $r = 0,1$. For $r=0$ this is trivial, and for $r=1$ this follows from the well known fact that continuous $H^1$ commutes with limits of compact modules (see Proposition 7 of \cite{Serre1}).
\end{proof}

\vspace{4mm}
\begin{prop}\label{Pro:TateModFixed}
Let $M = [u:Y\to G]$ be a 1--motive over $k$. There is a canonical isomorphism $(\Tell M)^\Gamma \cong \ker(Y^\Gamma \to G(k))\otimes\IZ_\ell$.
\end{prop}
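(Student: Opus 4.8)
The plan is to deduce this from the natural short exact sequence
$$0\to H^{-1}(k,M)\hotimes\IZ_\ell \to H^0(k,\Tell M)\to \Tell H^0(k,M)\to 0$$
of \ref{Par:IntroTateMod}, applied with $S=\spec k$ and $r=0$, after identifying its outer terms. The middle term $H^0(k,\Tell M)$ is by construction $\lim_i T_{\IZ/\ell^i\IZ}(M)(\overline k)^\Gamma$, which equals $(\Tell M)^\Gamma$ because taking $\Gamma$--invariants commutes with the limit. For the left term, the long exact sequence of \ref{Par:IntroMotive} begins $0\to H^{-1}(k,M)\to H^0(k,Y)\to H^0(k,G)$; over $k$ we have $H^0(k,Y)=Y^\Gamma$ and $H^0(k,G)=G(k)$ with the map between them induced by $u$, so $H^{-1}(k,M)=\ker(Y^\Gamma\to G(k))$, which being a subgroup of the finitely generated group $Y^\Gamma$ has $\ell$--adic completion $\ker(Y^\Gamma\to G(k))\otimes\IZ_\ell$. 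So the displayed sequence reads
$$0\to \ker(Y^\Gamma\to G(k))\otimes\IZ_\ell\to (\Tell M)^\Gamma\to \Tell H^0(k,M)\to 0,$$
and the proposition amounts to the assertion that $\Tell H^0(k,M)=0$.

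To see this, push the long exact sequence of \ref{Par:IntroMotive} one step further to get a short exact sequence
$$0\to G(k)/u(Y^\Gamma)\to H^0(k,M)\to \ker\big(H^1(k,Y)\to H^1(k,G)\big)\to 0.$$
The group $H^1(k,Y)$ is finite: if $N\trianglelefteq\Gamma$ is the open kernel of the action on the finitely generated free group $Y$, then $H^1(N,Y)=\Hom_{\textup{cont}}(N,Y)=0$ because a continuous homomorphism from a profinite group to a discrete torsion--free group is trivial, so inflation--restriction gives $H^1(k,Y)=H^1(\Gamma/N,Y)$, finite since $\Gamma/N$ is finite and $Y$ finitely generated. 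Hence $\Tell$ kills the right--hand term and $\Tell H^0(k,M)\cong\Tell\big(G(k)/u(Y^\Gamma)\big)$. The subgroup $u(Y^\Gamma)\subseteq G(k)$ is finitely generated, so this last Tate module vanishes as soon as $G(k)/u(Y^\Gamma)$ has finite torsion, equivalently as soon as the saturation of $u(Y^\Gamma)$ inside $G(k)$ is finitely generated.

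The one genuinely arithmetic input --- and the step I expect to be the main obstacle --- is therefore: \emph{for a semiabelian scheme $G$ over a number field $k$, the group $G(k)$ has finite torsion, and the saturation of any finitely generated subgroup of $G(k)$ is again finitely generated.} I would prove it through the exact sequence $0\to T(k)\to G(k)\to A(k)$ coming from an extension $0\to T\to G\to A\to 0$ of an abelian scheme $A$ by a torus $T$. On the abelian side, $A(k)$ is finitely generated by the Mordell--Weil theorem, so every subgroup of $A(k)$ is finitely generated; this controls the image in $A(k)$ of the saturation. On the torus side, $T$ splits over a finite extension $k'$, so $T(k)/T(k)_{\textup{tors}}$ embeds into $\big((k')^\times/\mu(k')\big)^{\dim T}$, which by Dirichlet's unit theorem and the finiteness of the class number is a free abelian group of countable rank; inside such a group every subgroup of finite rank --- in particular the saturation of any finitely generated subgroup --- is finitely generated, and pulling back through the finite group $T(k)_{\textup{tors}}$ keeps this property. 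Combining the two sides realises the saturation of $u(Y^\Gamma)$ in $G(k)$ as an extension of a finitely generated group by a finitely generated group, hence finitely generated; this gives $\Tell H^0(k,M)=0$ and with it the proposition. Finally one notes that the isomorphism thereby obtained is the intended one, being the inverse of the natural inclusion $\ker(Y^\Gamma\to G(k))\otimes\IZ_\ell\hookrightarrow(\Tell M)^\Gamma$ in the short exact sequence above.
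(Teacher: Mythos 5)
Your proposal is correct, and it takes a genuinely different route from the paper's. The paper does \emph{not} work directly over $\spec k$: it chooses an integral model of $M$ over an open $U\subseteq\spec\cO_k$ and applies the short exact sequence of \ref{Par:IntroTateMod} over $U$, then simply quotes Lemma 3.2 of \cite{Hara05} to conclude that $H^0(U,M)$ is finitely generated (that lemma packages exactly the d\'evissage through Mordell--Weil, Dirichlet, and finiteness of $H^1(U,Y)$), whence $\Tell H^0(U,M)=0$; finally it identifies $H^{-1}(U,M)=\ker(Y^\Gamma\to G(k))$ and $H^0(U,\Tell M)\cong(\Tell M)^\Gamma$ using that everything is unramified in $U$. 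You instead apply the sequence over $\spec k$, where $H^0(k,M)$ is typically \emph{not} finitely generated because $G(k)$ is not (whenever $G$ has a nontrivial toric part), and supply a direct argument that its $\ell$-primary torsion is nonetheless finite: namely that the saturation of the finitely generated subgroup $u(Y^\Gamma)$ inside $G(k)$ remains finitely generated, established by d\'evissage through $0\to T(k)\to G(k)\to A(k)$, Dirichlet for the torus side and Mordell--Weil for the abelian side, together with the finiteness of $H^1(k,Y)$. Both routes reduce to showing $\Tell H^0=0$, and both are correct; yours is more self-contained, the paper's shorter because it delegates the finite generation to the cited lemma and avoids confronting the infinite generation of $G(k)$ at all. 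One small remark: in your torus step the finiteness of the class number is not actually used --- the quotient of $(k')^\times$ by $\cO_{k'}^\times$ is automatically free as a subgroup of the free group $\bigoplus_\fp\IZ$, so Dirichlet's unit theorem alone suffices --- but its inclusion does no harm.
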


\begin{proof}
Let $U$ be an open subscheme of $\spec\cO_k$ such that there is a model of $M$ over $U$, which we still denote by $M$. We have a short exact sequence
$$0\to H^{-1}(U,M) \hotimes \IZ_\ell \to H^0(U,\Tell M) \to \Tell H^0(U,M)\to 0$$
as introduced in \ref{Par:IntroTateMod}. The group $H^0(U,M)$ is finitely generated (that follows by d\'evissage from the Mordell--Weil theorem, Dirichlet's unit theorem and the finiteness of $H^1(U,Y)$, see \cite{Hara05}, Lemma 3.2) hence $\Tell H^0(U,M)$ is trivial. We remain with an isomorphism
$$H^{-1}(U,M) \otimes \IZ_\ell \to H^0(U,\Tell M)$$
but now, observe that $H^{-1}(U,M) = \ker(Y^\Gamma \to G(k))$ and that $H^0(U,\Tell M) \cong (\Tell M)^{\Gamma}$.
\end{proof}

\vspace{4mm}
\begin{defn}
Let $T$ be an $\ell$--adic sheaf on $U$ and let $S$ be a set of closed points of $U$. For each $\fp\in S$ let $\kappa_\fp$ be the residue field at $\fp$ and denote still by $T$ the pull--back of $T$ to $\spec \kappa_\fp$. We define
$$H^1_S(U,T) := \ker\bigg(H^1(U,T)\to \prod_{\fp\in S}H^1(\kappa_\fp,T)\bigg)$$
Alternatively, in terms of Galois cohomology, let $\Gamma_U$ be the Galois group of the maximal extension of $k$ unramified in $U$ and let $D_\fp$ be a decomposition group of $\fp$ in $\Gamma_U$. For every finitely generated free $\IZ_\ell$--module with continuous $\Gamma_U$--action $T$ we define
$$H^1_S(\Gamma_U,T) := \ker\bigg(H^1(\Gamma_U,T)\to \prod_{\fp\in S}H^1(D_\fp,T)\bigg)$$
Observe that the choice of decomposition groups $D_\fp$ is unimportant since all decomposition groups over $\fp$ are conjugate, and a cocycle $c: \Gamma_U \to T$ restricts to a coboundary on $D_\fp$ if and only if it restricts to a coboundary on some conjugate of $D_\fp$.
\end{defn}

\vspace{4mm}
\begin{prop}\label{Pro:InjectionOtimesEll}
Let $k$ be a number field, let $G$ be a semiabelian scheme over $U$ and let $X$ be a subgroup of $G(U)$. Let $S$ be a set of closed points of $U$ of density 1 and write 
$$\overline X := \{P\in G(U) \tq \red_\fp(P)\in \red_\fp(X) \pt \fp\in S\}$$
Let $M = [u:Y\to G]$ be a 1--motive over $U$ where $Y$ is constant and such that $u(Y)$ is equal to $X$. For every prime number $\ell$ invertible on $U$ there exists a canonical, $\IZ_\ell$--linear injection $(\overline X/X) \otimes \IZ_\ell \to H^1_S(\Gamma_U, \Tell M)$.
\end{prop}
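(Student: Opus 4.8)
The plan is to build the injection $(\overline X/X)\otimes\IZ_\ell\to H^1_S(\Gamma_U,\Tell M)$ via Kummer theory attached to the 1-motive $M=[u:Y\to G]$. First I would produce a Kummer map $\kappa_\ell\colon \overline X\to H^1(\Gamma_U,\Tell M)$. Given $P\in\overline X\subseteq G(U)$, choose for each $i\geq 0$ a point $Q_i\in G(\overline k)$ with $\ell^i Q_i=P$; since $P\in X=u(Y)$ need not hold, we cannot use the description in \ref{Par:TellMExplicit} directly with $P$, but we \emph{can} use the fact that $P$ lies in $X$ modulo $\fp$ for $\fp\in S$ only at the level of decomposition groups. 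The honest Kummer class lives in $H^1(\Gamma_U,\Tell G)$ a priori: the cocycle $\sigma\mapsto(\sigma Q_i-Q_i)_i$ represents a class $c_P\in H^1(\Gamma_U,\Tell G)$, well defined independently of the choices, and $c_P=0$ iff $P\in\bigcap_i\ell^i G(k)\otimes\IZ_\ell$-image, i.e. essentially iff $P$ is infinitely $\ell$-divisible in $G(k)$ modulo torsion. The point of passing to $\Tell M$ rather than $\Tell G$ is exactly that $M$ records the subgroup $X$: from \ref{Par:TellMExplicit} there is an exact sequence of Galois modules $0\to \Tell G\to \Tell M\to Y\otimes\IZ_\ell\to 0$, and the obstruction to lifting $c_P$ along $H^1(\Gamma_U,\Tell G)\to H^1(\Gamma_U,\Tell M)$ is measured by the connecting map into $H^1(\Gamma_U,Y\otimes\IZ_\ell)$; because $Y$ is free with trivial enough action and $P$ maps to $0$ in the relevant quotient after lifting, one checks $c_P$ does lift, and moreover the image of $c_P$ in $H^1(\Gamma_U,\Tell M)$ depends only on $P$ modulo $X$, yielding a homomorphism $\overline X\to H^1(\Gamma_U,\Tell M)$ with kernel containing $X$.

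Second, I would show this homomorphism factors through $(\overline X/X)\otimes\IZ_\ell$ and is injective on that. Factoring through the tensor with $\IZ_\ell$ is automatic since the target is a $\IZ_\ell$-module; the substance is injectivity. Suppose $P\in\overline X$ has $\kappa_\ell(P)=0$ in $H^1(\Gamma_U,\Tell M)$; pulling back along $\Tell G\to \Tell M$ and using that $H^0$ and $H^1$ of the free part $Y\otimes\IZ_\ell$ are under control, one deduces $P$ becomes divisible by every power of $\ell$ in $(G(k)/X)\otimes\IZ_\ell$, hence $P\in X\otimes\IZ_\ell$ inside $G(U)\otimes\IZ_\ell$; combined with $P$ being an actual rational point and a standard saturation argument (using that $G(U)/X$ is finitely generated, cf. the Mordell--Weil--type finiteness invoked in the proof of \ref{Pro:TateModFixed}) this forces the class of $P$ in $(\overline X/X)\otimes\IZ_\ell$ to vanish.

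Third, and this is the heart of the matter, I must check that $\kappa_\ell(P)$ actually lands in the \emph{local} kernel $H^1_S(\Gamma_U,\Tell M)$, i.e. that it restricts to a coboundary on each decomposition group $D_\fp$ for $\fp\in S$. This is where the hypothesis $\red_\fp(P)\in\red_\fp(X)$ enters: reducing mod $\fp$, the point $P$ becomes $u(y)$ for some $y\in Y$ plus an $\ell$-divisible contribution from the connected part, so over $\kappa_\fp$ the class $\kappa_\ell(P)$ is the restriction of a class coming from $Y$, which dies in $H^1(D_\fp,\Tell M)$ because the corresponding extension of Galois modules splits over the residue field (the relevant Frobenius-equivariant lift exists since reduction mod $\fp$ realizes $P$ inside $X=u(Y)$). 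Making this compatibility precise — matching the global cocycle $(\sigma Q_i-Q_i)_i$ with a cocycle built from the mod-$\fp$ witnesses $y_\fp\in Y$, uniformly in $i$ so that it survives the limit — is the main obstacle, and requires care with the explicit description in \ref{Par:TellMExplicit} and with the commutativity of reduction with the Kummer sequence. Once the local triviality and the injectivity are in hand, canonicity and $\IZ_\ell$-linearity are immediate from the functoriality of the Kummer construction in $M$ and $U$, completing the proof.
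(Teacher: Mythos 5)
Your proposal is on the right track — the paper's own Remark after the Proposition confirms that the injection \emph{is} the explicit Kummer map $c_P\colon\sigma\mapsto(\sigma P_i-P_i)_i$ you describe — but the write-up has one conceptual confusion and leaves the crucial step unfinished.

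The confusion: you speak of ``the obstruction to lifting $c_P$ along $H^1(\Gamma_U,\Tell G)\to H^1(\Gamma_U,\Tell M)$'' measured by a connecting map into $H^1(\Gamma_U,Y\otimes\IZ_\ell)$. But the exact sequence of Galois modules runs $0\to\Tell G\to\Tell M\to Y\otimes\IZ_\ell\to 0$, so $\Tell G\to\Tell M$ is a \emph{monomorphism}, not an epimorphism. The map on $H^1$ is a pushforward and there is no obstruction at all; $c_P$ simply maps into $H^1(\Gamma_U,\Tell M)$. What the passage from $\Tell G$ to $\Tell M$ actually buys you — and this part you do state correctly — is that the image of $c_P$ now depends only on $P$ modulo $X$, because for $P=u(y)$ the cocycle becomes the coboundary of the element $(y,P_i)_i\in\Tell M$. (Equivalently, the boundary in the exact sequence kills the part of $H^1(\Gamma_U,\Tell G)$ coming from $H^0(\Gamma_U,Y\otimes\IZ_\ell)$.)

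The genuine gap: you correctly identify the local vanishing — that $\kappa_\ell(P)$ restricts to a coboundary on each $D_\fp$ for $\fp\in S$ — as ``the heart of the matter,'' and then explicitly declare that ``making this compatibility precise\dots is the main obstacle, and requires care.'' That is precisely the point you would need to carry out, and your sketch (``$P$ becomes $u(y)$ plus an $\ell$-divisible contribution from the connected part'') misdescribes the situation over a finite residue field, where $G(\kappa_\fp)$ is finite and $\red_\fp(P)$ lies \emph{exactly} in $u_\fp(Y)$. The paper sidesteps all cocycle-by-cocycle bookkeeping: it uses that $Y$ constant forces $H^1(U,Y)=H^1(\kappa_\fp,Y)=0$, so $H^0(U,M)\cong G(U)/X$ and $H^0(\kappa_\fp,M)\cong G(\kappa_\fp)/X_\fp$; then the natural short exact sequence $0\to H^0(-,M)\otimes\IZ_\ell\to H^1(-,\Tell M)\to\Tell H^1(-,M)\to 0$ of \ref{Par:IntroTateMod} together with its compatibility under reduction gives both injectivity (left-exactness) and the containment in $H^1_S$ (the image of $[P]\otimes 1$ dies in $\prod_\fp H^0(\kappa_\fp,M)\otimes\IZ_\ell$, hence in $\prod_\fp H^1(\kappa_\fp,\Tell M)$) by a single diagram chase, with no ad hoc matching of division points across primes. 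You should rewrite your argument in these terms, or else actually supply the uniform-in-$i$ local computation you flag as missing.
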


\begin{proof}
We have chosen a 1--motive $M=[u:Y\to G]$ over $U$ with constant $Y$, such that the image of $Y \to G(U)$ is $X$. The image of $Y \to G(\kappa_\fp)$ is then $X_\fp$, the reduction of $X$ modulo $\fp$. So, if $\fp$ is any element of $S$, then every point $P\in \overline X$ maps to zero in $H^0(\kappa_\fp, M)$  in the following diagram with exact rows
$$\begin{diagram}
\setlength{\dgARROWLENGTH}{4mm}
\node{\cdots} \arrow{e}\node{Y}\arrow{s,=}\arrow{e,t}{u_U}\node{G(U)}\arrow{s}\arrow{e}\node{H^0(U,M)}\arrow{s} \arrow{e}\node{0 = H^1(U,Y)\hspace{-7mm}}\\
\node{\cdots} \arrow{e}\node{Y}\arrow{e,t}{u_\fp}\node{G(\kappa_\fp)}\arrow{e}\node{H^0(\kappa_\fp,M)} \arrow{e}\node{0= H^1(\kappa_\fp,Y)\hspace{-7mm}}
\end{diagram}$$
Denote by $[P]$ the class of $P \in \overline X$ in $H^0(U,M) \cong G(U)/X$. We have seen that $[P]\otimes 1$ belongs to the kernel of the map $\alpha_\ell$ in the diagram
$$\begin{diagram}
\setlength{\dgARROWLENGTH}{4mm}
\node{0}\arrow{e}\node{H^0(U,M)\otimes \IZ_\ell} \arrow{s,l}{\alpha_\ell}\arrow{e} \node{H^1(U,\Tell M)}\arrow{s,l}{\beta_\ell} \arrow{e}\node{\Tell H^1(U,M)}\arrow{s} \arrow{e}\node{0}\\
\node{0}\arrow{e}\node{\!\!\prod \!H^0(\kappa_\fp,\!M)\!\otimes\! \IZ_\ell\!\!} \arrow{e} \node{\!\!\prod\!H^1(\kappa_\fp,\!\Tell M)\!\!} \arrow{e}\node{\!\!\prod\Tell H^1(\kappa_\fp,M)} \arrow{e}\node{0}
\end{diagram}$$
The rows are those introduced in \ref{Par:IntroTateMod} and the products range over $\fp\in S$. The $\ell$--adic completions are here just ordinary tensor products because the involved groups are all finitely generated (\cite{Hara05}, Lemma 3.2). We have natural injections
$$(\overline X/X)\otimes \IZ_\ell \subseteq \ker\alpha_\ell \subseteq \ker\beta_\ell = H^1_S(U,\Tell M)$$
hence the claim.
\end{proof}

\vspace{4mm}
\begin{rem}
The injection whose existence we claim in Proposition \ref{Pro:InjectionOtimesEll} is explicitly given as follows. Let $P$ be an element of $\overline X$, and denote by $[P]$ its class in $\overline X/X$. Choose a sequence of points $(P_i)_{i=0}^\infty$ in $G(\overline k)$ such that $P_0=P$ and such that $\ell P_{i+1}=P_i$ for all $i\geq 0$. The image of $[P]\otimes 1$ in $H^1_\ast(\Gamma_U, \Tell M)$ via the injection under consideration is the class of the cocycle $c_P:\Gamma_U\to \Tell M$ given by
$$c_P: \sigma \mapsto (\sigma P_i - P_i)_{i=0}^\infty$$
This makes sense since indeed each $\sigma P_i - P_i$ is a point in $G(\overline k)$ of order $\ell^i$, and together these points form a compatible system representing an element of the Tate module $\Tell G$, which is a submodule of $\Tell M$.
\end{rem}

\vspace{4mm}
\begin{rem}
Let $G$ be any semiabelian variety over $k$, let $X$ be a \emph{finitely generated} subgroup of $G(k)$ and let $\ell$ be any prime number. It is always possible to find an open subscheme $U$ of $\spec\cO_k$ such that $G$ has a model over $U$, such that all points in $X$ extend to $U$--points, and such that $\ell$ is invertible on $U$. Also observe that $G(U)$ is finitely generated, as a direct consequence of the Mordell--Weil theorem and Dirichlet's unit theorem.
\end{rem}

\vspace{4mm}
\begin{para}
For a 1--motive $M$ over $U$ we may regard the $\ell$--adic sheaf $\Tell M$ as a finitely generated free $\IZ_\ell$--module with continuous $\Gamma_U$--action, as we have explained, $\Gamma_U$ being the Galois group of the maximal extension of $k$ unramified in $U$. The following definition goes back to an idea of Tate and Serre: For a Hausdorff topological group $\Gamma$ and a continuous $\Gamma$--module $T$ we write
$$H^1_\ast(\Gamma,T) := \ker\!\bigg(H^1(\Gamma,T) \to \!\!\prod_{C \leq \Gamma}\!\!H^1(C,T)\bigg)$$
the product running over monogenous subgroups $C$ of $\Gamma$, cohomology being defined by means of continuous cochains. A subgroup of a topological group is called \emph{monogeneous} if is topologically generated by one element, that is, if it is the closure of a subgroup generated by one element. The following two propositions (\cite{Serre1}, Proposition 8 and Proposition 6) explain why the group $H^1_\ast(\Gamma_U, \Tell M)$ is interesting.
\end{para}

\vspace{4mm}
\begin{prop}\label{Pro:SerreGabber}
Let $T$ be a finitely generated $\IZ_\ell$--module with a continuous $\Gamma_U$--action and let $S$ be a set of closed points of $U$ of density 1. The subgroups $H^1_S(\Gamma_U,T)$ and $H^1_\ast(\Gamma_U,T)$ of $H^1(\Gamma_U,T)$ are equal.
\end{prop}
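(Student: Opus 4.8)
The plan is to prove the two inclusions separately; the inclusion $H^1_\ast(\Gamma_U,T) \subseteq H^1_S(\Gamma_U,T)$ is formal and the reverse inclusion is where the work lies. For the formal direction, observe that since every $\fp\in S$ is a closed point of $U$ it is unramified in the maximal extension $k_U$, so its decomposition group $D_\fp\subseteq\Gamma_U$ is procyclic, topologically generated by a Frobenius element. In particular each $D_\fp$ is one of the monogenous subgroups $C$ occurring in the definition of $H^1_\ast$, so a class that restricts to zero in $H^1(C,T)$ for every monogenous $C$ in particular restricts to zero in every $H^1(D_\fp,T)$.

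For the reverse inclusion I would start from the following elementary criterion: if $C=\overline{\langle\sigma\rangle}$ is the monogenous subgroup generated by $\sigma\in\Gamma_U$, then a continuous cocycle $c\colon\Gamma_U\to T$ restricts to a coboundary on $C$ if and only if $c(\sigma)\in(\sigma-1)T$; indeed, subtracting the coboundary $g\mapsto(g-1)t$ with $c(\sigma)=(\sigma-1)t$ produces a continuous cocycle vanishing on $\langle\sigma\rangle$, hence on its closure $C$ since $T$ is Hausdorff. So it suffices to show that if $c$ represents a class in $H^1_S(\Gamma_U,T)$ then $c(\sigma)\in(\sigma-1)T$ for every $\sigma\in\Gamma_U$. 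Since $T$ is a finitely generated $\IZ_\ell$--module and $(\sigma-1)T$ is a closed submodule, one has $\bigcap_{n}\big((\sigma-1)T+\ell^nT\big)=(\sigma-1)T$, so it is enough to prove that the reductions $\bar c_n\colon\Gamma_U\to T/\ell^nT$ satisfy $\bar c_n(\sigma)\in(\sigma-1)(T/\ell^nT)$ for every $n$.

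Now comes the Chebotarev step, which I expect to be the only real obstacle. Fix $n$. Both the $\Gamma_U$--action on the finite module $T/\ell^nT$ and the locally constant cocycle $\bar c_n$ factor through a common finite quotient $\Gamma_n:=\Gamma_U/H_n$, and the condition "$\bar c_n(\sigma)\in(\sigma-1)(T/\ell^nT)$" depends only on the image of $\sigma$ in $\Gamma_n$. By the Chebotarev density theorem the closed points of $U$ whose Frobenius lands in a given conjugacy class of $\Gamma_n$ form a set of positive density; as $S$ has density $1$ its complement has density $0$ and cannot contain such a set, so we may choose $\fp\in S$ together with a prime of $k_U$ above it for which $\mathrm{Frob}_\fp$ maps to the image of $\sigma$ in $\Gamma_n$ (using the remark that restricting to a coboundary on one decomposition group over $\fp$ is equivalent to doing so on any other). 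By hypothesis $c$ restricts to a coboundary on $D_\fp=\overline{\langle\mathrm{Frob}_\fp\rangle}$, so by the criterion $c(\mathrm{Frob}_\fp)\in(\mathrm{Frob}_\fp-1)T$, whence $\bar c_n(\mathrm{Frob}_\fp)\in(\mathrm{Frob}_\fp-1)(T/\ell^nT)$; since $\mathrm{Frob}_\fp$ and $\sigma$ have the same image in $\Gamma_n$ this reads $\bar c_n(\sigma)\in(\sigma-1)(T/\ell^nT)$, as desired. Letting $n$ vary gives $c(\sigma)\in(\sigma-1)T$, and letting $\sigma$ vary gives $c\in H^1_\ast(\Gamma_U,T)$. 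The delicate points are to keep the "coboundary on a monogenous subgroup" criterion compatible with reduction modulo $\ell^n$ and with passage to the finite quotient $\Gamma_n$, and to handle the bookkeeping of Frobenius elements versus their conjugacy classes.
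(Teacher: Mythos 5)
Your proof is correct and takes essentially the same route as the paper's: observe that decomposition groups at $\fp\in S$ are monogenous (giving the easy inclusion), then reduce to finite quotients and invoke Chebotarev to locate a $\fp\in S$ whose Frobenius matches a prescribed $\sigma\in\Gamma_U$ modulo the relevant open normal subgroup. The only cosmetic difference is that the paper reduces to finite coefficient modules at the level of cohomology (using that $H^1$ commutes with limits of finite modules), whereas you phrase the reduction through the pointwise criterion $c(\sigma)\in(\sigma-1)T$ together with the closedness of $(\sigma-1)T$ in $T$.
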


\begin{proof}
It is enough to show that the proposition holds for finite Galois modules of order a power of $\ell$. Indeed, $T$ can be written as a limit of such and the general case follows then because $H^1$ commutes with limits of finite modules, and formation of limits is left exact and commutes with products.\\
So let $F$ be a finite $\Gamma_U$ module of order a power of $\ell$. Let $c: \Gamma_U \to F$ be a continuous cocycle representing an element of $H^1_S(\Gamma_U,F)$ and let $\sigma$ be an element of $\Gamma_U$. We have to show that the restriction of $c$ to the monogeneous subgroup of $\Gamma_U$ generated by $\sigma$ is a coboundary, that is, we have to show that there exists an element $x\in F$ such that $c(\sigma)=\sigma x-x$.\\
Because $F$ is finite there exists an open subgroup $N$ of $\Gamma_U$ on which $c$ is zero. We may suppose that $N$ is normal and acts trivially on $F$. Denote by $\sigma_N$ the class of $\sigma$ in $\Gamma_U/N$. By Chebotarev's density theorem (see for example \cite{Neukirch99} Theorem 13.4), there exists a valuation $v$ of $k$ corresponding to an element $\fp \in S$ and an extension $w$ of $v$ to $k_U$ such that decomposition group of $w$ in $\Gamma_U/N$ equals the group generated by $\sigma_N$. Since the restriction of $c$ to the decomposition group $D_w \subseteq \Gamma_U$ is a coboundary, there exists a $x\in F$ such that
$$c(\tau) = \tau x-x \qquad \pt \tau\in D_w$$
As $N$ acts trivially on $F$, the same holds for all $\tau\in D_wN$, and in particular for $\tau = \sigma$. This shows that $H^1_S(\Gamma_U,F)$ is contained in $H^1_\ast(\Gamma_U,F)$. That $H^1_\ast(\Gamma_U,F)$ is contained in $H^1_S(\Gamma_U,F)$ is clear, since every decomposition group in $\Gamma_U$ corresponding to a place in $S$ is monogenous, topologically generated by the Frobenius element.
\end{proof}

\vspace{4mm}
\begin{prop}\label{Pro:H1astFundamental}
Let $\Gamma$ be a Hausdorff topological group and let $T$ be a a continuous $\Gamma$--module. Let $N$ be a normal closed subgroup of $\Gamma$ acting trivially on $T$. The inflation map $H^1(\Gamma/N,T)\to H^1(\Gamma, T)$ induces an isomorphism $H^1_\ast(\Gamma/N,T)\cong H^1_\ast(\Gamma, T)$.
\end{prop}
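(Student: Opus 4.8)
The statement asks to show that the inflation map $H^1(\Gamma/N,T)\to H^1(\Gamma,T)$ restricts to an isomorphism $H^1_\ast(\Gamma/N,T)\cong H^1_\ast(\Gamma,T)$. My plan is to start from the inflation--restriction exact sequence. Since $N$ acts trivially on $T$, the relevant piece of the five-term exact sequence reads
$$0\to H^1(\Gamma/N,T)\xrightarrow{\;\inf\;} H^1(\Gamma,T)\xrightarrow{\;\res\;} H^1(N,T)^{\Gamma/N},$$
and because $N$ acts trivially, $H^1(N,T)=\Hom_{\text{cont}}(N,T)$ with the conjugation action of $\Gamma/N$. The key point is thus to understand which cocycles on $\Gamma$ lie in $H^1_\ast$ and to show that these are exactly the ones coming from $\Gamma/N$, i.e.\ the ones whose restriction to $N$ vanishes.

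The first and easiest step is to check that inflation carries $H^1_\ast(\Gamma/N,T)$ into $H^1_\ast(\Gamma,T)$. If $\bar c:\Gamma/N\to T$ is a cocycle that restricts to a coboundary on every monogenous subgroup of $\Gamma/N$, and $C\leq\Gamma$ is monogenous, topologically generated by some $\sigma$, then the image $\bar C$ of $C$ in $\Gamma/N$ is monogenous, topologically generated by the image $\bar\sigma$; the restriction of $\inf(\bar c)$ to $C$ factors through $\bar C$, where $\bar c$ is a coboundary, hence $\inf(\bar c)$ restricts to a coboundary on $C$. So $\inf$ is well-defined on the $H^1_\ast$ subgroups, and it is injective because $\inf$ is already injective on all of $H^1$.

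The substance is surjectivity: given $c:\Gamma\to T$ a continuous cocycle representing a class in $H^1_\ast(\Gamma,T)$, I must show its class lies in the image of $\inf$, equivalently (by the exact sequence above) that $\res(c)\in\Hom_{\text{cont}}(N,T)$ is zero. Here I use that $c\in H^1_\ast$: for every $\sigma\in N$, the monogenous subgroup $C=\overline{\langle\sigma\rangle}$ is contained in $N$, which acts trivially on $T$, so $c|_C$ being a coboundary means $c(\sigma)=\sigma x-x=0$ for some $x\in T$. Thus $c(\sigma)=0$ for all $\sigma\in N$, i.e.\ $\res(c)=0$, so the class of $c$ comes from a class $\bar c\in H^1(\Gamma/N,T)$. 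It remains to see that $\bar c$ itself lies in $H^1_\ast(\Gamma/N,T)$: for a monogenous $\bar C\leq\Gamma/N$, topologically generated by $\bar\sigma$, choose a lift $\sigma\in\Gamma$; its monogenous closure $C$ maps onto $\bar C$, and since $c|_C$ is a coboundary and $c$ and $\inf(\bar c)$ agree, $\bar c$ restricted to $\bar C$ is a coboundary. Hence $\bar c\in H^1_\ast(\Gamma/N,T)$ and maps to the class of $c$.

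The only place requiring mild care is the continuity/topological bookkeeping: one must check that the monogenous closure $C$ of a lift $\sigma$ of a topological generator $\bar\sigma$ of $\bar C$ really surjects onto $\bar C$ — this holds because the continuous image of $\overline{\langle\sigma\rangle}$ is a closed subgroup of $\Gamma/N$ containing $\langle\bar\sigma\rangle$, hence contains $\bar C=\overline{\langle\bar\sigma\rangle}$ — and that restriction of a coboundary along a continuous surjection is again a coboundary, which is immediate. I do not expect any genuine obstacle here; the whole argument is a formal manipulation of the inflation--restriction sequence, the essential idea being that the defining condition of $H^1_\ast$ forces the restriction to the trivial-action subgroup $N$ to vanish on every procyclic subgroup of $N$, hence on all of $N$.
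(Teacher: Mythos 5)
Your argument is correct in substance and gives a complete proof of a statement that the paper itself merely cites (it refers to Serre's Proposition 6 with the comment ``straightforward to check''). Your route through the five-term inflation--restriction sequence and the direct cocycle approach one would otherwise take are essentially the same: the crux in both is that the $H^1_\ast$ condition applied to monogenous subgroups \emph{inside} $N$ forces $c|_N=0$ because $N$ acts trivially, so $c$ descends to $\Gamma/N$.

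There is one small inaccuracy in the ``topological bookkeeping'' you flag at the end. You assert that the continuous image of $C=\overline{\langle\sigma\rangle}$ in $\Gamma/N$ is a \emph{closed} subgroup, and hence that $C$ surjects onto $\bar C=\overline{\langle\bar\sigma\rangle}$. For a general Hausdorff topological group this can fail: the image of a closed subgroup under a quotient map is closed precisely when $CN$ is closed in $\Gamma$, which is not automatic (it does hold when $\Gamma$ is compact, which is the only case the paper uses, since $\Gamma_U$ and $L^M$ are profinite). Fortunately you do not need surjectivity. The image of $C$ is contained in $\bar C$ and contains $\langle\bar\sigma\rangle$, hence is dense in $\bar C$. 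The coboundary identity $\bar c(\bar\tau)=\bar\tau x-x$, which you obtain on this dense subset from the fact that $c|_C$ is a coboundary, then extends to all of $\bar C$ by continuity of $\bar c$ and of the action, because $T$ is Hausdorff. With that one repair the proof is complete and valid in the stated generality.
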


\begin{proof}
This is straightforward to check, see \cite{Serre1}, Proposition 6.
\end{proof}

\vspace{4mm}
\begin{para}
This has the following interesting consequence: Let us denote by $L^M$ be the image of $\Gamma_U$ in $\GL(\Tell M)$. Together, Propositions \ref{Pro:SerreGabber} and \ref{Pro:H1astFundamental} yield a canonical isomorphism
$$H^1_\ast(L^M, \Tell M) \cong H^1_S(\Gamma_U, \Tell M)$$
Since $\Gamma_U$ is compact this image $L^M$ is a closed subgroup of $\GL(\Tell M)$, hence has the structure of an $\ell$--adic Lie group (\cite{BourLie}, Ch.III, \S 2, no.2, th\'eor\`eme 2). We therefore can apply the machinery of $\ell$--adic Lie theory, and if we have sufficient knowledge of this Lie group and its Lie algebra, there might be a chance of effectively computing $H^1_\ast(L^M, \Tell M)$, hence $H^1_\ast(\Gamma_U, \Tell M)$. In the next section we will determine $L^M$ as far as we need.
\end{para}

\vspace{14mm}
\section{The image of Galois}

\begin{par}
Let $k$ be a number field contained in $\IC$ and let $M = [Y\to G]$ be a 1--motive over $k$. To $M$ and every prime number $\ell$ we have associated a finitely generated free $\IZ_\ell$--module with a continuous Galois action $\Tell M$. We define
$$\Vell M := \Tell M \otimes_{\IZ_\ell}\IQ_\ell$$
so $\Vell M$ is a finite dimensional $\IQ_\ell$--vector space, and we have a continuous group homomorphism
$$\rho_\ell:\Gal(\overline k|k) \to \GL(\Vell M)$$
We have already noted that the image $L^M$ of the map $\rho_\ell$ is a compact $\ell$--adic Lie subgroup of $\GL(\Vell M)$. We write $\fl^M \subseteq \End(\Vell M)$ corresponding Lie algebra. The aim of this section is to say something halfway useful about the Lie algebra $\fl^M$. We restrict ourselves to 1--motives of the form $M=[Y\to A]$ where $A$ is an abelian variety (rather than a semiabelian variety). 
\end{par}

\vspace{4mm}
\begin{defn}\label{Def:ConstructionVnotM}
Let $M=[Y\to A]$ be a 1--motive over $k$ where $A$ is an abelian variety. We write $\TZ(M)$ for the the pull--back of $Y$ and $\Lie A(\IC)$ over $A(\IC)$ (in the category of commutative groups) and define $\Vnot M := \TZ(M)\otimes\IQ$.
\end{defn}

\vspace{4mm}
\begin{para}\label{Par:ConstructionVnotM}
The $\IQ$--vector space $\Vnot M$ has finite dimension $2\dim A+ \rk Y$, and the $\IC$--vector space $\Vnot M \otimes\IC$ carries a Hodge decomposition of type $(0,0),(0,1),(1,0)$ (\cite{Deli74}, Lemme 10.1.3.2). Hence $\Vnot M$ is an rational mixed Hodge structure. It is called the \emph{rational Hodge realisation} of $M$. 
By construction we have a short exact sequence 
$$0\to \Vnot A \to \Vnot M \to Y\otimes \IQ \to 0$$
and there is a canonical lift $\natural: \ker u\otimes\IQ \to \Vnot M$ of the inclusion of $\ker u\otimes\IQ \subseteq Y\otimes\IQ$. The next proposition is Deligne's construction 10.1.6 of \emph{loc.cit}.
\end{para}

\vspace{4mm}
\begin{prop}\label{Pro:TateModIsRationalM}
For every prime number $\ell$ there is a canonical and natural isomorphism of $\IQ_\ell$--vector spaces $\Vnot M \otimes \IQ_\ell \cong \Vell M$.
\end{prop}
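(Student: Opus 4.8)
The plan is to match the two sides degree by degree through the comparison isomorphism for torsion points. Recall that $\Vnot M = \TZ(M)\otimes\IQ$ where $\TZ(M)$ is the pull-back of $Y\to A(\IC)\leftarrow \Lie A(\IC)$; so an element of $\TZ(M)$ is a pair $(y,v)$ with $y\in Y$, $v\in\Lie A(\IC)$ and $\exp(v)=u(y)$ in $A(\IC)$. First I would fix a prime $\ell$ and exhibit a natural map $\TZ(M)\otimes\IZ_\ell\to \Tell M$. Given $(y,v)$ and $i\geq 0$, the point $P_i:=\exp(v/\ell^i)\in A(\IC)$ satisfies $\ell^i P_i=u(y)$, and the pairs $(y,P_i)$ are compatible under the transition maps of \ref{Par:TellMExplicit} (using $z_i=0$, since $\ell P_i=P_{i-1}$ and $y_i=y$ is constant); this defines an element of $\Tell M$. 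Extending $\IZ_\ell$-linearly, and checking that the construction is independent of choices, gives a natural homomorphism $\TZ(M)\otimes\IZ_\ell\to\Tell M$, hence after tensoring with $\IQ_\ell$ a natural map $\Vnot M\otimes\IQ_\ell\to\Vell M$.

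Next I would show this map is an isomorphism, and here the cleanest route is a five-lemma / filtration argument. Both $\Vnot M$ and $\Tell M$ carry a two-step filtration: on the Betti side, $0\to\Vnot A\to\Vnot M\to Y\otimes\IQ\to 0$ from \ref{Par:ConstructionVnotM}; on the $\ell$-adic side, the corresponding sequence $0\to\Tell A\to\Tell M\to Y\otimes\IZ_\ell\to 0$ coming from the definition of the Tate module of a $1$-motive (the sub is $\Tell G=\Tell A$, the quotient is $Y\hotimes\IZ_\ell$, visible in \ref{Par:TellMExplicit} by reading off $(y_i)$). The map just constructed is compatible with these filtrations: it sends $\Vnot A\otimes\IQ_\ell$ into $\Vell A$ and induces the identity on $Y\otimes\IQ_\ell$. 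On the quotient it is visibly an isomorphism. On the sub it restricts to the classical comparison isomorphism $\TZ A\otimes\IZ_\ell\cong\Tell A=\Tell(A[\ell^\infty])$ for the abelian variety $A$, which identifies $H_1(A(\IC),\IZ)\otimes\IZ_\ell$ with $\varprojlim A[\ell^i](\IC)$ via $v\mapsto(\exp(v/\ell^i))_i$ — a standard fact. By the five lemma the map $\Vnot M\otimes\IQ_\ell\to\Vell M$ is therefore an isomorphism. Naturality in $M$ is clear from the construction, since every step (pull-back, exponential, transition maps) is functorial in $1$-motives.

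The main obstacle is bookkeeping rather than conceptual: one has to be careful that the element of $\Tell M$ produced from $(y,v)$ really satisfies the defining relations in \ref{Par:TellMExplicit} and that the equivalence relation there is respected, so that the map $\TZ(M)\otimes\IZ_\ell\to\Tell M$ is well defined; and one has to verify compatibility with the two filtrations simultaneously on both sides, i.e. that the chosen splittings/lifts (the canonical lift $\natural$ on ${\ker u\otimes\IQ}$, and the $z_i$'s in \ref{Par:TellMExplicit}) are matched up. Once the abelian-variety case $\TZ A\otimes\IZ_\ell\cong\Tell A$ is taken as known, the rest is the five lemma. Alternatively, and more in the spirit of Deligne's \emph{loc.cit.} 10.1.6, one can avoid the explicit cocycle description entirely: realise $\TZ(M)$ as $H_1$ of the $1$-motive computed via the exponential complex and $\Tell M$ as $\varprojlim H_1(M\otimes^\IL\IZ/\ell^i)$, and invoke the general comparison between singular and étale (finite-coefficient) homology for the finite flat group schemes $T_{\IZ/\ell^i\IZ}(M)$, passing to the limit. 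I would present the filtration argument as the main proof since it is the most transparent.
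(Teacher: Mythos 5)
Your map $\TZ(M)\otimes\IZ_\ell\to\Tell M$, $(y,v)\mapsto(y,\exp(v/\ell^i))_i$, is (up to the identification $\TZ(M)/\ell^i\TZ(M)\cong\ell^{-i}\TZ(M)/\TZ(M)$) exactly the one the paper constructs level by level via $(y,x)\mapsto(\ell^iy,\exp(x))$, so the approach is essentially the same. The only difference is in how you verify that it is an isomorphism: you filter by weight and apply the five lemma together with the classical comparison $H_1(A(\IC),\IZ)\otimes\IZ_\ell\cong\Tell A$, whereas the paper checks the explicit formula directly on each finite quotient $\ell^{-i}\TZ(M)/\TZ(M)\cong T_{\IZ/\ell^i\IZ}(M)(\overline k)$ and passes to the limit; both verifications are sound and elementary.
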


\begin{proof}
We show that there is even a natural isomorphism of $\IZ_\ell$--modules $\TZ(M)\otimes \IZ_\ell \cong \Tell M$. To do so, we must show that there are natural and compatible isomorphisms of finite groups
$$\ell^{-i}\TZ(M)/\TZ(M) \xrightarrow{\:\:\cong\:\:}T_{\IZ/\ell^i\IZ}(M)(\overline k)$$
Indeed, elements of $\TZ(M)$ are pairs $(y,x)\in Y\times \Lie A(\IC)$ such that $u(y) = \exp(x)$. Hence elements of $\ell^{-i}\Lambda_M$ are pairs $(y,x)\in Y\times\Lie A(\IC)$ such that $\ell^iu(y) = \ell^i\exp(x)$. Using the expression for $T_{\IZ/\ell^i\IZ}(M)(\overline k)$ introduced in \ref{Par:TellMExplicit}, we must show that there is are natural isomorphisms
$$\frac{\{(y,x)\in Y\times\Lie A(\IC) \tq \ell^iu(y) = \ell^i\exp(x)\}}{\{(y,x)\in Y\times\Lie A(\IC)\tq u(y) = \exp(x)\}} \xrightarrow{\:\:\cong\:\:} \frac{\{(y,P)\in Y \times A(\overline k)\tq u(y) = \ell^iP\}}{\{(\ell^iy,u(y))\tq y \in Y(\overline k)\}}$$
The isomorphisms we are looking for are given by $(y,x) \mapsto (\ell^iy, \exp(x))$. Compatibility is straightforward to check and naturality is clear from the construction.
\end{proof}

\vspace{4mm}
\begin{para}\label{Pro:WeightFiltration}
Let $M=[Y\to A]$ be a 1--motive over $k$ where $A$ is an abelian variety. There are obvious morphisms of 1--motives 
$$A[0] \to M \to Y[1]$$ 
where $A[0]$ denotes the 1--motive $[0\to A]$ and $Y[1]$ denotes the 1--motive $[Y\to 0]$. These morphisms induce a short exact sequence of Galois representations as well as a short exact sequence of rational Hodge structures
$$0\to \Vell A \to \Vell M \to Y\otimes \IQ_\ell \to 0 \qqet 0\to \Vnot A \to \Vnot M \to Y\otimes\IQ \to 0$$
These exact sequences are compatible in the sense that the underlying exact sequence of $\IQ_\ell$--vector spaces of the $\ell$--adic realisations is canonically isomorphic to the underlying exact sequence of $\IQ$--vector spaces of the Hodge realisation tensored with $\IQ_\ell$. This follows from Proposition \ref{Pro:TateModIsRationalM}. Observe that $\Vell A$ is the usual $\ell$--adic Galois representation associated with $A$, obtained by tensoring the $\ell$--adic Tate module $\lim A(\overline k)[\ell^i]$ with $\IQ_\ell$, and that $\Vnot A$ is canonically isomorphic to the singular homology group $H_1(A(\IC),\IQ)$, which also is a rational Hodge structure of pure weight 1.
\end{para}

\vspace{4mm}
\begin{para}\label{Par:LieAlgExtLadic}
Let $M=[Y\to A]$ be a 1--motive over $k$ where $A$ is an abelian variety. Let $\overline k$ be an algebraic closure of $k$ and set $\Gamma:=\Gal(\overline k|k)$. We write $L^M$ and $L^A$ for the image of $\Gamma$ in the group of $\IQ_\ell$--linear automorphisms of $\Vell M$ and $\Vell A$ respectively, and we denote by $L^M_A$ the stabiliser of $\Vell A$ in $L^M$. We have thus a short exact sequence of $\ell$--adic Lie groups $0\to L^M_A \to L^M \to L^A \to 1$ and associated with it is a short exact sequence of Lie algebras
$$0\to\fl^M_A\to \fl^M \to \fl^A \to 0$$
The Lie algebra $\fl^M_A$ acts trivially on $Y\otimes \IQ_\ell$ and on $\Vell A$. Hence it is commutative and may be identified with a $\IQ_\ell$--linear subspace of $\Hom(Y\otimes\IQ_\ell, \Vell A)$. To determine $\fl^M$ amounts to determine the Lie algebras $\fl^A$ and $\fl^M_A$ and to determine how $\fl^M$ is an extension of $\fl^A$ by $\fl^M_A$. We can now formulate the main results of this section.
\end{para}

\vspace{4mm}
\begin{defn}\label{Def:fhMA}
For every a 1--motive $M=[u:Y\to A]$, where $A$ is an abelian variety, we write $\fh^M_A$ for the $\IQ$--linear subspace of $\Hom(Y\otimes\IQ, V_0A)$ consisting of those homomorphisms $f$ such that $\psi_1f(y_1) + \cdots + \psi_nf(y_n)=0$ whenever $\psi_i \in \End_{\overline k}A$ and $y_i\in Y$ are such that $\psi_1u(y_1) + \cdots + \psi_nu(y_n)=0$.
\end{defn}

\vspace{4mm}
\begin{thm}\label{Thm:RibetVersion}
Let $M=[u:Y\to A]$ be a 1--motive over $k$ where $A$ is an abelian variety. The equality $\fh^M_A \otimes \IQ_\ell = \fl^M_A$ holds for all prime numbers $\ell$. In particular the dimension of $\fl^M_A$ is independent of $\ell$.
\end{thm}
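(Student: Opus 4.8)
The plan is to prove the two inclusions $\fl^M_A\subseteq\fh^M_A\otimes\IQ_\ell$ and $\fh^M_A\otimes\IQ_\ell\subseteq\fl^M_A$ separately, arranging that the first is essentially formal and that the second reduces to a known openness theorem for Kummer maps. Before anything I would replace $k$ by a finite extension so that the \'etale sheaf $Y$ becomes constant and $u(Y)\subseteq A(k)$; this turns $L^M_A$ into an open subgroup, hence leaves $\fl^M_A$ unchanged, and it does not affect $\fh^M_A$, which only involves $\End_{\overline k}A$, the points $u(y)\in A(\overline k)$ and $\Vnot A$. It is also convenient to record an intrinsic description of the target: writing $R^0:=\End_{\overline k}A\otimes\IQ$ and letting $P\subseteq A(\overline k)\otimes\IQ$ be the $R^0$--submodule generated by $u(Y)$, one checks directly from Definition \ref{Def:fhMA} --- using that $\Vnot A$ is torsion free, so relations valid only up to torsion are automatically respected --- that $\fh^M_A$ is canonically $\Hom_{R^0}(P,\Vnot A)$, and hence $\fh^M_A\otimes\IQ_\ell\cong\Hom_{R^0\otimes\IQ_\ell}(P\otimes\IQ_\ell,\Vell A)$; in particular $\dim_{\IQ_\ell}\fh^M_A\otimes\IQ_\ell$ is manifestly independent of $\ell$.

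For $\fl^M_A\subseteq\fh^M_A\otimes\IQ_\ell$ I would argue with Kummer cocycles. Let $L'\leq L^M_A$ be the subgroup acting trivially on both $\Vell A$ and $Y\otimes\IQ_\ell$; it is open by \ref{Par:LieAlgExtLadic}, since $\fl^M_A$ already acts trivially on both, so $\fl^M_A$ is also the Lie algebra of $L'$. For each $y\in Y$ choose a compatible system $(P^{(y)}_j)_{j\geq0}$ in $A(\overline k)$ with $\ell^jP^{(y)}_j=u(y)$ and $\ell P^{(y)}_j=P^{(y)}_{j-1}$, and for $\sigma\in L'$ put $\phi_\sigma(y):=(\sigma P^{(y)}_j-P^{(y)}_j)_{j\geq0}\in\Tell A$. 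Because $\sigma$ fixes $\Vell A$ pointwise, $\phi_\sigma(y)$ is independent of the chosen divisions and additive in $y$, and $\sigma\mapsto\phi_\sigma$ is a continuous injective homomorphism $L'\to\Hom(Y\otimes\IQ_\ell,\Vell A)$ whose differential is the canonical inclusion $\fl^M_A\hookrightarrow\Hom(Y\otimes\IQ_\ell,\Vell A)$ of \ref{Par:LieAlgExtLadic}; since $L'$ is compact its image is a $\IZ_\ell$--lattice, so $\fl^M_A$ is exactly the $\IQ_\ell$--span of the cocycles $\phi_\sigma$. It then suffices to see that each $\phi_\sigma$ respects a given relation $\sum_i\psi_iu(y_i)=0$ with $\psi_i\in\End_{\overline k}A$: after enlarging $k$ so that the $\psi_i$ are defined, hence commute with $\sigma$ on $\Tell A$, the points $\sum_i\psi_iP^{(y_i)}_j$ assemble into an element $t\in\Tell A$, and $\sum_i\psi_i\phi_\sigma(y_i)=(\sigma-1)t$, which vanishes because $\sigma$ acts trivially on $\Vell A$. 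Differentiating yields the inclusion.

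The reverse inclusion $\fh^M_A\otimes\IQ_\ell\subseteq\fl^M_A$ is the substantial point, and this is where I expect the main obstacle. By the previous step it is equivalent to the assertion that the Kummer cocycles $\phi_\sigma$ span $\fh^M_A\otimes\IQ_\ell$ --- equivalently, that the image of the Kummer map attached to $u(Y)\subseteq A(k)$ is a full--rank lattice in $\Hom_{R^0\otimes\IQ_\ell}(P\otimes\IQ_\ell,\Vell A)$. This is exactly a form of Ribet's theorem on the openness of the image of the Kummer map of a finitely generated subgroup of an abelian variety over a number field: for $\End_{\overline k}A=\IZ$ it is Ribet's original statement, and for an arbitrary endomorphism ring it is deduced by combining it with Faltings' theorems --- semisimplicity of $\Vell A$ as a $\Gamma$--module and $\End_\Gamma(\Vell A)=R^0\otimes\IQ_\ell$ --- which are what identify $\fh^M_A\otimes\IQ_\ell$ with the full group of endomorphism--compatible homomorphisms $P\otimes\IQ_\ell\to\Vell A$, together with Bogomolov's theorem that $L^A$ contains an open subgroup of homotheties, used to kill $H^1(L^A,\Vell A)$ and thereby propagate fullness from $\Vell A$ to the extension $\Vell M$. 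Granting this, equality of dimensions, hence of the two spaces, follows. My plan would therefore be to isolate precisely the Kummer--openness input in the needed generality and either invoke it or reprove it along Ribet's lines; the remainder of the argument is formal.
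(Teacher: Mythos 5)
Your overall architecture matches the paper's: reduce to $Y$ constant with endomorphisms defined over $k$; reinterpret $\fh^M_A$ intrinsically (your $\Hom_{R^0}(P,\Vnot A)$ is, after the isomorphism $P\cong (R^0\otimes Y\otimes\IQ)/R$, the same object as the paper's $\Vnot B$, where $B$ is the connected component of the smallest algebraic subgroup of $\cHom(Y,A)$ through $u$); prove $\fl^M_A\subseteq\fh^M_A\otimes\IQ_\ell$ by pairing Kummer cocycles against relations and differentiating; and appeal to Faltings and Bogomolov for the converse. The easy inclusion is essentially the paper's Lemma~\ref{Lem:RibetInclusion}, and your observation that on $L'$ one has $\log\sigma=\sigma-1$ (because $(\sigma-1)^2=0$ on $\Vell M$), so that $\fl^M_A$ is literally the $\IQ_\ell$--span of the Kummer homomorphisms, is a clean way to phrase it.

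The gap is in the reverse inclusion, which you acknowledge is ``the substantial point'' but then reduce to an unstated ``Kummer openness theorem.'' There is no off--the--shelf statement in the needed generality: Ribet's \cite{Ribe76} is for CM abelian varieties, and what you actually need is precisely the content of this theorem. The paper closes the gap with two concrete lemmas you do not supply. Lemma~\ref{Lem:DividingInjection} produces an injection $\Hom_k(B,A)\otimes\IZ_\ell\hookrightarrow H^1(L^M,\Tell M)$ by observing that for any $\varphi\in\Hom_k(B,A)$ the point $\varphi(u)$ becomes $\ell$--divisible over the field $k_M$ cut out by $L^M$, so its Kummer class lands in $H^1(L^M,\Tell A)$ and is nonzero by minimality of $B$; this is where the ``Ribet divisibility'' content actually lives. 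Lemma~\ref{Lem:BogoHSSS} then identifies $H^1(L^M,\Vell A)$ with $\Hom_{L^A}(L^M_A,\Vell A)$ via Hochschild--Serre, using Bogomolov plus Sah's lemma to kill $H^1(L^A,\Vell A)$ and $H^2(L^A,\Vell A)$. Composing, and using Faltings's $\Hom_k(B,A)\otimes\IQ_\ell\cong\Hom_{\fl^A}(\Vell B,\Vell A)$, one gets an injection $\Hom_{\fl^A}(\Vell B,\Vell A)\hookrightarrow\Hom_{\fl^A}(\fl^M_A,\Vell A)$ which, by semisimplicity and the already--known inclusion $\fl^M_A\subseteq\Vell B$, forces $\fl^M_A=\Vell B$ by a dimension count. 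You invoke the same ingredients (semisimplicity, $\End_\Gamma=R^0\otimes\IQ_\ell$, Bogomolov, ``propagate fullness via $H^1$'') but never construct the injection or run the dimension argument; ``equality of dimensions\dots follows'' is asserted rather than proved. To make your proof complete you would need to supply these two steps or their equivalent.
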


\vspace{4mm}
\begin{par}
The result is not really new, it essentially is a reformulation of a theorem of Ribet \cite{Ribe76} (see also \cite{Hind88}, Appendix 2). While the inclusion $\fh^M_A \otimes \IQ_\ell \supseteq \fl^M_A$ is elementary to show, the inclusion in the other direction uses Faltings's theorem on homomorphisms of abelian varieties over number fields (\cite{Faltings}) as well as Bogomolov's theorem on the image of the Galois group in the automorphisms of the Tate module of an abelian variety (\cite{Bogo81}).
\end{par}

\vspace{4mm}
\begin{para}\label{Par:TrailerfhM}
We will moreover construct a Lie subalgebra $\fh^M$ of $\End(\Vnot M)$ with the following properties. The Lie algebra $\fh^M$ leaves $\Vnot A$ invariant and acts trivially on $Y\otimes \IQ$. The stabiliser of $\Vnot A$ in $\fh^M$ is the Lie algebra $\fh^M_A$ defined in \ref{Def:fhMA}. So there is a short exact sequence
$$0\to\fh^M_A\to \fh^M \to \fh^A \to 1$$
where $\fh^A$ is the image of $\fh^M$ in the endomorphisms of $\Vnot A$. The Lie algebra $\fh^A$ is chosen in such a way that $\fh^M \otimes \IQ_\ell$ is contained in $\fl^M$, and in the case where  the equality $\fh^A\otimes \IQ_\ell = \fl^A$ holds, the equality $\fh^M\otimes \IQ_\ell = \fl^M$ holds as well. We would of course like to take for $\fh^A$ a Lie algebra such that for every prime number $\ell$ the equality 
$$\fh^A\otimes\IQ_\ell \overset{?}{=} \fl^A$$
holds. The Mumford--Tate conjecture states that such a Lie algebra exists and that it is the Lie algebra associated with the Mumford--Tate group of $A$. We do not want to assume this conjecture here.
\end{para}

\vspace{4mm}
\begin{para}{\bf Notation:} 
For a nontrivial abelian variety $A$ over $\overline k$ and every prime number $\ell$ we let $\fh^A = \fh^A_{(\ell)}$ denote any Lie subalgebra of $\End(\Vnot A)$ having the following properties
\begin{enumerate}
\item As an $\fh^A$--module $\Vnot A$ is semisimple.
\item The Lie algebra $\fh^A$ is contained in the commutator of $\End_{\overline k}(A)$ in $\End(\Vnot A)$.
\item The identity endomorphism of $\Vnot A$ belongs to $\fh^A$.
\item The Lie algebra $\fl^A$ \emph{contains} $\fh^A\otimes\IQ_\ell$.
\end{enumerate}
Such a Lie algebra indeed exists, we could just take $\fh^A$ to be the commutative 1--dimensional Lie algebra $\IQ$ acting as scalar multiplication on $\Vnot A$, independently of $\ell$. A Theorem of Bogomolov (\cite{Bogo81}, Theorem 3) asserts that the Lie algebra $\fl^A$ contains the scalars. Bogomolov's Theorem even assures that we can take $\fh^A$ such that the equality $\fl^A = \fh^A\otimes\IQ_\ell$ holds, but then $\fh^A$ might depend on $\ell$. If the Mumford--Tate conjecture holdsfor $A$ we can take $\fh^A$ to be the Lie algebra of the Mumford--Tate group of $A$.
\end{para}

\vspace{4mm}
\begin{para}\label{Par:RibetPreparation}
We now come to the proof of Theorem \ref{Thm:RibetVersion}, which we split up in several lemmas. We start with three preliminary remarks.

\begin{par}
{\bf (a)} In proving Theorem \ref{Thm:RibetVersion} we can without loss of generality replace $k$ by a finite extension of $k$. Indeed, if we do so the group $L^M$ gets replaced by a subgroup of finite index, which has then the same Lie algebra as $L^M$. In particular, we can and will assume from now on that $Y$ is constant and that all endomorphisms of $A$ are defined over $k$.
\end{par}

\begin{par}
{\bf (b)} The fppf--sheaf $\cHom(Y,A)$ on $\spec k$ is representable by a power of $A$. The morphism $u:Y\to A$ is a $k$--rational point on $\cHom(Y,A)$, and we denote by $B$ the connected component of the smallest algebraic subgroup of $\cHom(Y,A)$ containing $u$. In proving Theorem \ref{Thm:RibetVersion} we can without loss of generality suppose that $u$ belongs to $B$. Indeed, the smallest algebraic subgroup of $\cHom(Y,A)$ containing $u$ has only finitely many connected components because $\cHom(Y,A)$ is proper, hence for some $m>0$ the point $mu$ belongs to $B$. The morphism of 1--motives
$$[Y\xrightarrow{\:\:u\:\:} A] \xrightarrow{\:\:(m,\id)\:\:} [Y\xrightarrow{\:\:mu\:\:} A]$$
induces isomorphisms under the realisation functors $\Vell(-)$ and $\Vnot(-)$, so we may replace $u$ by $mu$.
\end{par}

\begin{par}
{\bf (c)} Let us write $E := \End_{\overline k}A\otimes\IQ$ and denote by $R$ the $\IQ$--linear subspace of $E\otimes Y$ generated by the elements $\psi_1\otimes y_1 +\cdots+\psi_n\otimes y_n \in \End_{\overline k}A\otimes Y$ such that $\psi_1u(y_1) + \cdots + \psi_nu(y_n)=0$ in $A(k)$. The subspace $R$ of $E\otimes Y$ is obviously an $E$--submodule. We have a canonical pairing
$$\angl{-,-}:(E\otimes Y)\times \Hom(Y\otimes\IQ,\Vnot A) \to \Vnot A$$
defined by $\angl{\psi\otimes y,f} = \psi f(y)$. By definition $\fh^M_A$ is the annihilator of $R$ in this pairing.
\end{par}
\end{para}

\vspace{4mm}
\begin{lem}
There is a canonical and natural isomorphism of $E$--modules $\Vnot\cHom(Y,A) \cong \Hom(Y\otimes\IQ, \Vnot A)$. Under this isomorphism $\Vnot B \subseteq \Vnot\cHom(Y,A)$ and $\fh^M_A \subseteq \Hom(Y\otimes\IQ, \Vnot A)$ correspond to each other.
\end{lem}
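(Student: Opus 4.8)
The plan is to prove the two assertions in turn. The isomorphism of $E$--modules is a formal computation with complex tori, while the identification of $\Vnot B$ with $\fh^M_A$ will come from writing both subspaces as intersections of kernels of homomorphisms of abelian varieties that vanish at the point $u$.

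First I would establish the isomorphism $\Vnot\cHom(Y,A)\cong\Hom(Y\otimes\IQ,\Vnot A)$ analytically. Since $\cHom(Y,A)(\IC)=\Hom(Y,A(\IC))$ canonically and $Y$ is free, applying $\Hom(Y,-)$ to $0\to H_1(A(\IC),\IZ)\to\Lie A(\IC)\to A(\IC)\to 0$ identifies $\cHom(Y,A)(\IC)$ with the complex torus $\Hom(Y,\Lie A(\IC))/\Hom(Y,H_1(A(\IC),\IZ))$. Hence $\Vnot\cHom(Y,A)\cong\Hom(Y,H_1(A(\IC),\IZ))\otimes\IQ\cong\Hom(Y\otimes\IQ,\Vnot A)$, and the post--composition action of $\End_{\overline k}A$ on $\cHom(Y,A)$ becomes, under this identification, post--composition with the induced maps on $\Vnot A$; this is exactly the $E$--module structure on $\Hom(Y\otimes\IQ,\Vnot A)$ underlying the pairing $\angl{-,-}$ in \ref{Par:RibetPreparation}. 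Naturality in $Y$ and $A$ is visible from the construction.

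Next I would reinterpret the two subspaces. Identifying $E\otimes Y$ with $\Hom(\cHom(Y,A),A)\otimes\IQ$ via $\psi\otimes y\mapsto(g\mapsto\psi(g(y)))$, an element $\rho=\sum_i\psi_i\otimes y_i$ corresponds to a homomorphism $\phi_\rho\colon\cHom(Y,A)\to A$ with $\phi_\rho(u)=\sum_i\psi_iu(y_i)$, and under the isomorphism of the previous paragraph one checks $\angl{\rho,f}=\Vnot(\phi_\rho)(f)$. Therefore $R$ is precisely the kernel of evaluation at $u$, $E\otimes Y\to A(k)\otimes\IQ$, and its annihilator is
$$\fh^M_A=\bigcap_\phi\ker\big(\Vnot(\phi)\big),$$
the intersection ranging over all homomorphisms $\phi\colon\cHom(Y,A)\to A$ with $\phi(u)=0$. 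On the other hand $\Vnot(-)$ is an exact additive functor on abelian varieties up to isogeny (Poincar\'e reducibility), so $\ker(\Vnot(\psi))=\Vnot\big((\ker\psi)^0\big)$ for every homomorphism $\psi$; moreover, since $B$ is the identity component of the Zariski closure of $\langle u\rangle$, any $\psi$ with $\psi(u)$ torsion kills $B$, while the quotient map $\pi\colon\cHom(Y,A)\to\cHom(Y,A)/B$, multiplied by a suitable integer $N$, satisfies $(N\pi)(u)=0$ and $\ker(N\pi)^0=B$. Combining these gives
$$\Vnot B=\bigcap_\psi\ker\big(\Vnot(\psi)\big),$$
the intersection now ranging over homomorphisms $\psi\colon\cHom(Y,A)\to A'$ into an \emph{arbitrary} abelian variety with $\psi(u)=0$.

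It remains to compare the two intersections. The inclusion $\Vnot B\subseteq\fh^M_A$ is immediate, the homomorphisms with target $A$ forming a subfamily of those with arbitrary target. For the reverse inclusion it is enough to show $\ker(\Vnot(\psi))\supseteq\fh^M_A$ for each $\psi\colon\cHom(Y,A)\to A'$ with $\psi(u)=0$; replacing $A'$ by $\psi(\cHom(Y,A))$ I may assume $\psi$ surjective, and then, $\cHom(Y,A)$ being a power of $A$ (by remark (b) of \ref{Par:RibetPreparation}), Poincar\'e reducibility makes $A'$ isogenous to an abelian subvariety of $\cHom(Y,A)$. Composing $\psi$ with this isogeny, the resulting embedding, and the coordinate projections onto $A$ produces homomorphisms $\phi_1,\ldots,\phi_r\colon\cHom(Y,A)\to A$ with every $\phi_j(u)=0$ and, since $\Vnot(-)$ is injective on closed embeddings and invertible on isogenies, $\ker(\Vnot(\psi))=\bigcap_j\ker(\Vnot(\phi_j))\supseteq\fh^M_A$. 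Intersecting over all such $\psi$ gives $\fh^M_A\subseteq\Vnot B$, hence equality. The step I expect to be the main obstacle is the description of $\Vnot B$ above: pinning $B$ down through the homomorphisms that kill $u$ and correctly handling the passages to identity components and the ``up to isogeny'' identifications. The remaining steps are routine manipulations with the analytic picture and with the exactness of $\Vnot(-)$.
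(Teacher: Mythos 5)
Your proof is correct and rests on the same two pillars as the paper's: the choice of a $\IZ$--basis of $Y$ to identify $\cHom(Y,A)$ with a power of $A$ (equivalently, the analytic description of $\Vnot$), and Poincar\'e reducibility combined with the minimality of $B$. The packaging differs, though. The paper characterizes $x\in\Vnot B$ directly by asking that the one--parameter subgroup $\exp(\IR x)$ lie in $B(\IC)$, then invokes Poincar\'e reducibility once to say that a connected subgroup of $A^r(\IC)$ lies in $B$ iff it lies in $\ker\psi$ for every $\psi\colon A^r\to A$ with $\psi(B)=0$, and finally uses minimality to swap $\psi(B)=0$ for $\psi(u)=0$; written in coordinates $\psi=(\psi_1,\ldots,\psi_r)$ this is literally the defining condition of $\fh^M_A$. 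You instead describe both $\fh^M_A$ and $\Vnot B$ as intersections of kernels of $\Vnot(\psi)$, with $\Vnot B$ cut out by the a priori larger family of $\psi$ with arbitrary abelian target, and then spend an extra step reducing that larger family back to maps landing in $A$ via Poincar\'e reducibility and coordinate projections. That reduction is exactly what the paper sidesteps by restricting to target $A$ from the outset. Both arguments are sound; the paper's is slightly more economical, yours makes the role of $R$ as the kernel of evaluation at $u$ more transparent. One small remark: the paper's appeal to ``minimality of $B$'' in the form $\psi(B)=0\iff\psi(u)=0$ silently uses that $u\in B$, which was arranged in remark (b) of \ref{Par:RibetPreparation}; your version, which only asks $\psi(u)$ to be torsion and multiplies $\pi$ by a suitable $N$, handles this point cleanly without that normalization.
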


\begin{proof}
We choose a $\IZ$--basis $y_1, \ldots, y_r$ of $Y$ so that we can identify $Y$ with $\IZ^r$ and hence the abelian varieties $\cHom(Y,A)$ and $A^r$. This identification is natural in $A$, and the point $u$ of $\cHom(Y,A)$ corresponds to the point $(u(y_1), \ldots, u(y_r))$ of $A^r$. We get isomorphisms of $E$--modules
$$\Vnot\cHom(Y,A) \cong \Vnot(A^r) \cong  (\Vnot A)^r \cong \Hom(Y\otimes\IQ, \Vnot A) $$
whose composition is independent of the choice of the basis of $Y$. An element $x$ of $\Vnot(A^r) \subseteq \Lie A^r(\IC)$ belongs to $\Vnot B$ if and only if the one parameter subgroup $\exp(\IR x)$ of $A^r(\IC)$ is contained in $B(\IC)$. It follows from Poincar\'e's Reducibility Theorem (\cite{Mumf70} IV.19, Theorem 1) that a connected subgroup of $A^r(\IC)$ is contained in $B$ if and only if it is contained in $\ker\psi$ for every morphism $\psi:A^r\to A$ such that $\psi(B)=0$.  By minimality of $B$ we have $\psi(B)=0$ if and only if $\psi(u)=0$, hence we find
$$x\in \Vnot B \iff \psi(\exp(\IR x)) = 0 \:\pt  \psi\in\Hom(A^r,A) \mbox{ such that } \psi(u)=0$$
But now observe that $\psi(\exp(\IR x))= \exp(\IR \psi x)$ and that to say that $\exp(\IR \psi x) = 0$ is the same as to say that $\psi x=0$. If we denote by $\psi_1, \ldots, \psi_r$ the components of $\psi\in\Hom(A^r,A)$, we therefore  have 
$$x\in \Vnot B \iff \psi x = 0 \pt \psi_1, \ldots, \psi_r\in\End A \mbox{ with } \psi_1u(y_1) + \cdots + \psi_ru(y_r)=0$$
If we now look at $x \in \Vnot(A^r)$ as being a homomorphism $Y\otimes\IQ\to \Vnot A$ via the isomorphism we have introduced, the condition that $\psi x =0$ for all $\psi$ means that $x$ belongs to $\fh^M_A$. 
\end{proof}

\vspace{4mm}
\begin{lem}\label{Lem:RibetInclusion}
Let $M=[Y\to A]$ be a 1--motive over $k$ where $A$ is an abelian variety, and let $\ell$ be a prime number. The Lie algebra $\fl^M_A$ is contained in $\fh^M_A \otimes \IQ_\ell$.
\end{lem}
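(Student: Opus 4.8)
The plan is to show that every element of $\fl^M_A$, viewed as a homomorphism $Y\otimes\IQ_\ell \to \Vell A$, satisfies the defining relations of $\fh^M_A\otimes\IQ_\ell$ — namely that $\psi_1 f(y_1)+\cdots+\psi_n f(y_n)=0$ whenever $\psi_1 u(y_1)+\cdots+\psi_n u(y_n)=0$ with $\psi_i\in\End_{\overline k}A$ and $y_i\in Y$. Since by \ref{Par:RibetPreparation}(a) we may assume all endomorphisms of $A$ are defined over $k$ and $Y$ is constant, the $\psi_i$ are Galois-equivariant. The starting point is the explicit description of $\Tell M$ from \ref{Par:TellMExplicit}: an element of $\Tell M$ is a compatible system $(y_i,P_i)$ with $u(y_i)=\ell^i P_i$, and the Galois action is the obvious one. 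An element of $\fl^M_A$ integrates (after passing to an open subgroup, which does not change the Lie algebra) to a one-parameter subgroup inside $L^M_A$; concretely, a cocycle-type description is cleaner, so I would instead argue at the level of the group $L^M_A$ and then pass to Lie algebras.

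The key step is the following. Fix a relation $\sum_j \psi_j u(y_j)=0$ in $A(k)$. This relation propagates to the level of Tate modules: applying the $\psi_j$ to the components of elements of $\Tell M$ gives a Galois-equivariant constraint. More precisely, consider the Galois-submodule of $\Vell M$ spanned by $\Vell A$ together with the "lifts" of the $y_j$; the relation $\sum_j\psi_j u(y_j)=0$ forces $\sum_j \psi_j$ (applied to these lifts) to land in a proper sub-object, in fact to vanish on the quotient $Y\otimes\IQ_\ell$. An element $g\in L^M_A$ acts trivially on both $\Vell A$ and $Y\otimes\IQ_\ell$, so $g-\id$ maps $\Vell M$ into $\Vell A$ and factors through $Y\otimes\IQ_\ell$; writing $f_g\colon Y\otimes\IQ_\ell\to\Vell A$ for the induced map, I must check $\sum_j \psi_j f_g(y_j)=0$. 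This comes down to tracking the lift $\natural$-type splitting: choose, for each generator $y_j$, a point $\widetilde y_j\in\Vell M$ mapping to $y_j$; then $\sum_j\psi_j \widetilde y_j$ maps to $\sum_j\psi_j y_j\in Y\otimes\IQ_\ell$, but the relation in $A(k)$ together with the identification of \ref{Par:TellMExplicit} shows $\sum_j\psi_j\widetilde y_j$ actually lies in $\Vell A$ (it is a genuine lift of $0\in Y\otimes\IQ_\ell$ to an element of $\Vell A$, using $u(y_j)=\ell^i P_{j,i}$). Applying $g-\id$ to $\sum_j\psi_j\widetilde y_j\in\Vell A$ gives $0$ since $g$ fixes $\Vell A$ pointwise; on the other hand $g-\id$ applied term-by-term gives $\sum_j\psi_j f_g(y_j)$. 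Hence $\sum_j\psi_j f_g(y_j)=0$, i.e. $f_g\in\fh^M_A\otimes\IQ_\ell$. Differentiating this inclusion $L^M_A\hookrightarrow \{g : f_g\in\fh^M_A\otimes\IQ_\ell\}$ (a closed algebraic condition) yields $\fl^M_A\subseteq\fh^M_A\otimes\IQ_\ell$.

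I expect the main obstacle to be bookkeeping: making precise the claim that the lift $\sum_j\psi_j\widetilde y_j$ of $0$ genuinely lies in $\Vell A$, which requires carefully unwinding the description of $\Tell M$ in \ref{Par:TellMExplicit} (one has to choose the $P_{j,i}$ compatibly and check the relation $\sum_j\psi_j u(y_j)=0$ forces $\sum_j\psi_j P_{j,i}$ to be $\ell^i$-divisible in a way compatible with the tower, so that it contributes a class in $\Tell A$ rather than merely in $\Tell M / \Tell A$). Once this is set up, Galois-equivariance of the $\psi_j$ (available after the finite base change in \ref{Par:RibetPreparation}(a)) makes everything go through, and the passage from the group-level inclusion to the Lie-algebra inclusion is routine $\ell$-adic Lie theory. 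Note this direction is the elementary one — as remarked after Theorem \ref{Thm:RibetVersion}, the reverse inclusion is the one needing Faltings and Bogomolov — so no deep input is required here.
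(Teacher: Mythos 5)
Your argument is correct and is essentially the paper's own: fix a relation $\sum_j\psi_j u(y_j)=0$, lift the $y_j$ to elements of $\Tell M$ via the explicit description of \ref{Par:TellMExplicit}, observe that the $\psi_j$--weighted sum of the $\Vell A$--components forms a genuine element of $\Tell A$ (precisely the bookkeeping point you flag), and use that $\sigma\in L^M_A$ fixes $\Tell A$ to conclude $\langle r,\sigma-1\rangle=0$. The paper reaches the Lie algebra statement by noting directly that $\log\sigma=\sigma-1$ on the unipotent group $L^M_A$, while you phrase it as differentiating a closed group-level inclusion, but this is the same step.
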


\begin{proof}
Let $r=\psi_1\otimes y_1 +\cdots+\psi_n\otimes y_n$ be an element of $R$ and let us show that we have $\angl{r,x}=0$ for every $x\in \fl^M_A$. Replacing $r$ by some multiple of $r$ we may suppose that the $\psi_i$ are actual endomorphisms of $A$. We must show that for every $\sigma\in L^M_A$ we have $\angl{r,\log \sigma}=0$. We have $\log\sigma= \sigma-1$, so what we have to show is that for all $\sigma\in\Gal(\overline k|k)$ acting trivially on $\Tell A$ we have $\angl{r,\sigma-1} = 0$. For every $y_i$, let $v_i$ be an element of $\Tell M$ mapping to $y_i \otimes 1$ in $Y\otimes\IZ_\ell$. Using our explicit description of the Tate module $\Tell M$ given in \ref{Par:TellMExplicit} we may write these preimages as sequences $v_i = (P_{ij},y_i)_{j=0}^{\infty}$ where the $P_{ij}\in A(\overline k)$ are points such that $P_{i0} = u(y_i)$ and $\ell P_{i, j+1} = P_{ij}$ for all $j\geq 0$. Now we compute
$$\angl{r,\sigma-1} = \sum_{i=1}^n\psi_i (\sigma v_i-v_i) = \sum_{i=1}^n\psi_i (\sigma P_{ij} - P_{ij})_{j=0}^{\infty} =  \sigma \sum_{i=1}^n(\psi_iP_{ij})_{j=0}^\infty - \sum(\psi_iP_{ij})_{j=0}^{\infty} $$
By definition of $R$ we have $\psi_1P_{10} + \cdots + \psi_n P_{n0}=0$ hence $\psi_1P_{1j} + \cdots + \psi_n P_{nj}$ is an element of order $\ell^j$ in $A(\overline k)$. But by hypothesis $\sigma$ acts trivially on $\Tell A$, hence on all $\ell^j$--torsion points of $A(\overline k)$. Therefore, the right hand side of the above equality is zero.
\end{proof}

\vspace{4mm}
\begin{lem}\label{Lem:BogoHSSS}
Let $M=[Y\to A]$ be a 1--motive over $k$ where $A$ is an abelian variety, and let $\ell$ be a prime number. There is a canonical isomorphism $H^1(L^M, \Vell A) \cong \Hom_{L^A}(L^M_A, \Vell A)$. 
\end{lem}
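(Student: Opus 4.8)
The plan is to run the Hochschild--Serre spectral sequence for the extension of profinite groups
$$0\to L^M_A \to L^M \to L^A \to 1$$
with coefficients in the continuous $L^M$--module $\Vell A$, and to kill both boundary terms by invoking Bogomolov's theorem. Observe first that $L^M$ is profinite, being a closed subgroup of the compact $\ell$--adic Lie group $\GL(\Vell M)$, and likewise for its subquotients, so the Hochschild--Serre formalism for continuous cohomology is available. By construction $L^M_A$ is exactly the kernel of the action of $L^M$ on $\Vell A$, so $(\Vell A)^{L^M_A}=\Vell A$; moreover, since $Y$ is constant (see \ref{Par:RibetPreparation}), $L^M_A$ also acts trivially on $Y\otimes\IQ_\ell$, hence embeds into the abelian group $\Hom(Y\otimes\IQ_\ell,\Vell A)$ and is in particular abelian, so the conjugation action of $L^M$ on $L^M_A$ descends to an action of $L^A$. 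Therefore $H^1(L^M_A,\Vell A)=\Hom_{\textup{cont}}(L^M_A,\Vell A)$ carries an $L^A$--action (conjugation on the source, the given action on the target) whose invariants are $\Hom_{L^A}(L^M_A,\Vell A)$, and the five--term exact sequence reads
$$0\to H^1(L^A,\Vell A)\to H^1(L^M,\Vell A)\xrightarrow{\textup{res}}\Hom_{L^A}(L^M_A,\Vell A)\to H^2(L^A,\Vell A).$$

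The substantive step is the vanishing $H^i(L^A,\Vell A)=0$ for $i=1,2$. This is where Bogomolov's theorem enters: since $\fl^A$ contains the homotheties (\cite{Bogo81}, Theorem 3), the group $L^A$ contains an element $z$ acting on $\Vell A$ as a scalar $\lambda\in\IQ_\ell^\times$ with $\lambda\neq 1$. Because $z$ is central in $\GL(\Vell A)$, it is central in $L^A$, so the endomorphism of $H^i(L^A,\Vell A)$ induced by ``multiplication by $\lambda$'' on the coefficient module coincides with the identity; hence $(\lambda-1)$ annihilates $H^i(L^A,\Vell A)$, and since the latter is a $\IQ_\ell$--vector space and $\lambda-1$ is an invertible scalar, it vanishes for every $i\geq 0$.

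Feeding this into the five--term sequence shows that $\textup{res}\colon H^1(L^M,\Vell A)\to\Hom_{L^A}(L^M_A,\Vell A)$ is an isomorphism, which is the asserted canonical isomorphism. If one wishes to avoid the spectral sequence one can argue directly: a continuous cocycle $c\colon L^M\to\Vell A$ restricts to a continuous homomorphism on $L^M_A$ (the action there being trivial), which the cocycle identity shows to be $L^A$--equivariant; injectivity of $\textup{res}$ holds because a cocycle vanishing on $L^M_A$ factors through $L^A$ and $H^1(L^A,\Vell A)=0$, and surjectivity follows by extending a given equivariant homomorphism along a continuous section of $L^M\to L^A$ to a cochain on $L^M$ and then correcting the resulting $2$--cocycle on $L^A$ using $H^2(L^A,\Vell A)=0$.

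The only non--formal ingredient is the vanishing of $H^1$ and $H^2$ of $L^A$ with coefficients in $\Vell A$; everything else is bookkeeping with the inflation--restriction sequence. I expect the step needing the most care is precisely the reduction of that vanishing to Bogomolov's result on the homotheties in the image of the Galois representation, together with the verification that $L^M_A$ is abelian and that the conjugation action descends to $L^A$, so that the target $\Hom_{L^A}(L^M_A,\Vell A)$ is the correct one.
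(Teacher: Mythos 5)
Your proof is correct and follows exactly the same route as the paper: the five--term Hochschild--Serre exact sequence for $1\to L^M_A\to L^M\to L^A\to 1$ with coefficients in $\Vell A$, with the outer terms $H^1(L^A,\Vell A)$ and $H^2(L^A,\Vell A)$ killed by Bogomolov's theorem on homotheties combined with the central-element argument (this is what the paper calls Sah's Lemma, which you simply spell out). The additional bookkeeping you include — that $L^M_A$ is abelian, that conjugation descends to $L^A$, the cocycle-level alternative — is sound but implicit in the spectral-sequence formalism and not separately flagged in the paper.
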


\begin{proof}
The Hochschild--Serre spectral sequence furnishes an exact sequence in low degrees
$$0\to H^1(L^A,\Vell A) \to H^1(L^M,\Vell A) \xrightarrow{\:\:(\ast)\:\:} H^0(L^A, H^1(L^M_A, \Vell A)) \to H^2(L^A,\Vell A)$$
By Bogomolov's theorem (\cite{Bogo81} Theorem 3) there exists an element in $L^A$ which acts as multiplication by a scalar $\neq 1$ on $\Vell A$. Thus, by Sah's Lemma the first and last term in the above exact sequence vanish, and so the map labelled $(\ast)$ is an isomorphism. Since $L^M_A$ acts trivially on $\Vell A$ by definition, we have $H^0(L^A, H^1(L^M_A, \Vell A)) = \Hom_{L^A}(L^M_A, \Vell A)$. 
\end{proof}

\vspace{4mm}
\begin{lem}\label{Lem:DividingInjection}
There is a canonical, injective $\IZ_\ell$--linear map 
$$\Hom_k(B,A)\otimes\IZ_\ell\to H^1(L^M,\Tell M)$$ 
\end{lem}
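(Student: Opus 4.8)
The plan is to build the map out of Kummer theory applied to the rational point $u\in B(k)$, using Deligne's explicit description of $\Tell M$ to make it land in $H^1(L^M,\Tell M)$, and to derive injectivity from Faltings's theorem.

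\emph{The construction.} After the reductions of \ref{Par:RibetPreparation} we may regard $u$ as a $k$-rational point of the abelian subvariety $B\subseteq\cHom(Y,A)\cong A^r$, and we fix a compatible system $(u^{(i)})_{i\ge0}$ of $\ell$-power divisions of $u$ inside $B(\overline k)$ (so $u^{(0)}=u$ and $\ell u^{(i+1)}=u^{(i)}$). Taking the coordinates $P_{j,i}:=(u^{(i)})_j$ as the divisions appearing in \ref{Par:TellMExplicit}, the elements $v_j:=(P_{j,i},y_j)_{i\ge0}$ of $\Tell M$ are lifts of a basis $y_1,\dots,y_r$ of $Y$. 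To a homomorphism $\varphi\colon B\to A$ over $k$ I attach the continuous $1$-cocycle
$$c_\varphi\colon\ \sigma\ \longmapsto\ \bigl(\sigma\varphi(u^{(i)})-\varphi(u^{(i)})\bigr)_{i\ge0}\ \in\ \Tell A\subseteq\Tell M ,$$
which is well defined because $\varphi$ is $k$-rational; its class in $H^1(\Gamma,\Tell M)$ does not depend on the chosen divisions, and — by \ref{Par:TellMExplicit}, since an element of $\Gamma$ acting trivially on $\Tell M$ fixes each $v_j$ and hence fixes each $u^{(i)}$ — it is inflated from a class in $H^1(L^M,\Tell M)$. Since $\Tell{(-)}$ makes sense on $\Hom_k(B,A)\otimes\IZ_\ell$, the additive assignment $\varphi\mapsto[c_\varphi]$ extends $\IZ_\ell$-linearly to the desired map.

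\emph{A point to be careful about.} The coordinate projections $\pi_j=\mathrm{ev}_{y_j}\colon B\to A$ give $c_{\pi_j}(\sigma)=\sigma v_j-v_j$, a coboundary, so the crude cocycle above alone does not separate $\varphi$ from the $\IZ_\ell$-span of the $\pi_j$; one must record how $\varphi$ acts through the position of the $v_j$ \emph{inside} $\Tell M$. Concretely: choose an extension $\tilde\varphi\colon A^r\to A$ of a multiple $N\varphi$ (up to isogeny, by Poincar\'e reducibility), with components $\psi_1,\dots,\psi_r\in\End_kA$, and take the class of $\sigma\mapsto\tfrac1N\sum_j\psi_j(\sigma v_j-v_j)$. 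One checks, using \ref{Par:RibetPreparation}(c), that this depends only on $\tilde\varphi$ modulo $R$ — indeed if $\sum_j\psi_ju(y_j)=0$ then $(\sum_j\psi_jP_{j,i})_i$ lies in $\Tell A\subseteq\Tell M$ and witnesses a coboundary — and, by minimality of $B$, $R$ is precisely the kernel of $(\End_kA)^r\to\Hom_k(B,A)$ up to isogeny. Getting this packaging exactly right is the bookkeeping core of the construction.

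\emph{Injectivity, which I expect to be the main obstacle.} Suppose $\varphi\ne 0$ but $[c_\varphi]=0$; after lifting to $(\psi_j)$, the cocycle $\sigma\mapsto\sum_j\psi_j(\sigma v_j-v_j)$ is then a coboundary $\sigma w-w$ with $w=w_0+\sum_j\lambda_jv_j$, $w_0\in\Tell A$, $\lambda_j\in\IZ_\ell$. Restricting to the normal subgroup $L^M_A$, which acts trivially on $\Tell A$, the right-hand side vanishes, so $\sum_j(\psi_j-\lambda_j)\bigl(f(y_j)\bigr)=0$ for every $f$ in the image of $L^M_A\hookrightarrow\Tell B$, $\sigma\mapsto(\sigma u^{(i)}-u^{(i)})_i$. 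One must deduce that $\sum_j\psi_j\otimes y_j$ already lies in $R$, i.e.\ that $\varphi=0$ in $\Hom_k(B,A)\otimes\IZ_\ell$; this is the hard step, since a priori the image of $L^M_A$ in $\Tell B$ is not yet known to see all $\End_kA$-linear relations among the $u(y_j)$. The route I would take combines Faltings's theorem — so that $\Gamma$-equivariant homomorphisms $\Tell B\to\Tell A$ come from $\Hom_k(B,A)\otimes\IZ_\ell$ — with Bogomolov's theorem, used through Sah's Lemma exactly as in Lemma~\ref{Lem:BogoHSSS} to kill $H^1(L^A,\Vell A)$; the delicate requirement is to organise the argument so that it never presupposes the equality $\fh^M_A\otimes\IQ_\ell=\fl^M_A$ that the present lemma is a stepping stone towards.
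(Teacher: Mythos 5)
Your instinct to build the map from the Kummer cocycle of $\varphi(u)$ is exactly right, but the worry you raise in your second paragraph should have pushed you to a different conclusion than the one you drew, and your injectivity argument is vastly harder than it needs to be.

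On the worry: you are correct that $c_{\pi_j}\colon\sigma\mapsto\sigma v_j-v_j$ is a coboundary in $H^1(L^M,\Tell M)$, since $v_j$ lies in $\Tell M$. It is, however, \emph{not} a coboundary in $H^1(L^M,\Tell A)$ unless $u(y_j)$ is torsion, because there the trivializing element must lie in $\Tell A$. The kernel of $H^1(L^M,\Tell A)\to H^1(L^M,\Tell M)$ is the image of the connecting map $\delta\colon Y\otimes\IZ_\ell\to H^1(L^M,\Tell A)$, and $\delta(y_j)$ is precisely the Kummer class of $u(y_j)$ --- this is the defect you noticed. The correct resolution is to land the map in $H^1(L^M,\Tell A)$ and stop there; the paper's diagram does exactly this, producing an injection into $K\subseteq H^1(L^M,\Tell A)$, and this is also the group needed downstream, where the lemma is composed with the isomorphism $H^1(L^M,\Vell A)\cong\Hom_{L^A}(L^M_A,\Vell A)$ of Lemma~\ref{Lem:BogoHSSS}. (So the $\Tell M$ in the lemma's statement is effectively $\Tell A$.) Your proposed ``fix'' $\sigma\mapsto\tfrac1N\sum_j\psi_j(\sigma v_j-v_j)$ does not change anything: $\sigma v_j-v_j$ already lies in $\Tell A$, and the sum is just the Kummer cocycle of $\sum_j\psi_j u(y_j)=N\varphi(u)$, i.e.\ the same cocycle up to dividing by $N$. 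The repair is the choice of coefficient module, not a different cocycle.

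On injectivity: you reach for Faltings, Bogomolov and Sah's Lemma, and you rightly sense the circularity with Theorem~\ref{Thm:RibetVersion}. None of that is needed; the argument is purely formal. The map $\Hom_k(B,A)\to A(k)$, $\varphi\mapsto\varphi(u)$, is already injective: if $\varphi(u)=0$ then $\ker\varphi$ is an algebraic subgroup of $B$ containing $u$, hence equals $B$ by minimality of $B$, hence $\varphi=0$. Tensoring with the flat ring $\IZ_\ell$ preserves injectivity. The Kummer map $A(k)\otimes\IZ_\ell\to H^1(k,\Tell A)$ is injective. Finally, $\varphi(u)$ is an $\End_kA$-linear combination of the points $u(y_j)$, each of which becomes $\ell$-divisible over $k_M$ by construction of $k_M$; so $\varphi(u)$ dies in $A(k_M)\hotimes\IZ_\ell$, and by inflation--restriction its Kummer class is inflated from $H^1(L^M,\Tell A)$. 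Composing these injections gives the lemma. Faltings, Bogomolov and semisimplicity only enter later, in the proof of Theorem~\ref{Thm:RibetVersion}, once this elementary lemma is in hand.
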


\begin{proof}
Let us write $k_M$ for the field extension of $k$ whose Galois group is the quotient $L^M$ of $\Gamma = \Gal(\overline k|k)$. By our explicit description of the Tate module of $M$ (\ref{Par:TellMExplicit}), this $k_M$ is the smallest field extension of $k$ such that for all $y\in Y$ all $\ell$--division points of $u(y)$ are defined over $k_M$. In other words, $k_M$ is the smallest extension of $k$ such that all elements of $u(Y)$ become $\ell$--divisible in $A(k_M)$. Any point $P\in A(k)$ which is an $\End_kA$--linear combination of points in $u(Y)$ becomes then divisible in $A(k_M)$ as well. We consider now the following diagram
$$\begin{diagram}
\setlength{\dgARROWLENGTH}{4mm}
\node[3]{\Hom_k(B,A)\otimes \IZ_\ell}\arrow{s,r}{(1)}\arrow{sw,--}\\
\node{0}\arrow{e}\node{K}\arrow{s,r}{(4)} \arrow{e}\node{A(k)\otimes \IZ_\ell} \arrow{s,r}{(5)}\arrow{e,t}{(2)}\node{A(k_M)\hotimes \IZ_\ell}\arrow{s,r}{(6)}\\
\node{0}\arrow{e}\node{H^1(L^M, \Tell A)}\arrow{e}\node{H^1(k,\Tell A)} \arrow{e,t}{(3)}\node{H^1(k_M,\Tell A)}
\end{diagram}$$
Let me explain the maps. First, the map (1) is induced by the map $\Hom_k(B,A) \to A(k)$ sending a homomorphism $\varphi$ to the rational point $\varphi(u)$. The maps (2) and (3) are induced by the inclusion of fields $k \subseteq k_M$. We use here that $A(k)$ is finitely generated, so $A(k)\otimes \IZ_\ell$ is the same as $A(k)\hotimes \IZ_\ell$. The vertical maps (5) and (6) are the maps in the Kummer sequences introduced in \ref{Par:IntroTateMod} (for $i=1$), so they are both injective. We define $K$ to be the kernel of (2). From the Hochschild--Serre spectral sequence we see that the kernel of (3) is $H^1(L^M, \Tell A)$. The map (4) is then the restriction of (5) so that the diagram commutes. Since (5) is injective, (4) is injective as well.\\
Having this diagram, all that remains to show is that the dashed arrow exists and that it is injective. In other words, we have to show that (1) is injective and that the composition of (1) and (2) is zero. The map (1) is injective because $\IZ_\ell$ is a flat $\IZ$--module and because already the map $\Hom_k(B,A) \to A(k)$ is injective. Indeed, let $\varphi:B\to A$ be a morphism of abelian varieties such that $\varphi(u)=0 \in A(k)$. The kernel of $\varphi$ is then an algebraic subgroup of $B$ containing $u$, hence equal to $B$ by minimality of $B$, and so $\varphi$ is zero. The composition of (1) and (2) is zero. Indeed, for every homomorphism $\varphi:B\to A$ the point $\varphi(u)$ is an $\End_kA$--linear combination of points in $u(Y)$, hence $\varphi(u)$ is $\ell$--divisible in $A(k_M)$, and hence the class of $\varphi(u)$ in $A(k_M)\hotimes \IZ_\ell$ is trivial.
\end{proof}

\vspace{4mm}
\begin{rem}
Explicitly, the map whose existence we claim in the lemma is the following. Given a homomorphism $\varphi:B\to A$, it sends $\varphi\otimes 1$ to the class of the cocycle $$c_\varphi : \sigma\mapsto (\sigma P_i-P_i)_{i=0}^\infty \in\Tell A$$
where $(P_i)_{i=0}^\infty$ is a sequence of points in $A(\overline k)$ such that $P_0=\varphi(u)$ and $\ell P_{i+1}=P_i$. As we shall see in a moment, this map has a finite cokernel. It is then not hard to see that the points of $P\in A(k)$ which become divisible in $A(k_M)$ are precisely those points such that that for some integer $m>0$ the point $mP$ is an $\End_kA$--linear combination of points in $u(Y)$. This relates Theorem \ref{Thm:RibetVersion} with Ribet's Main Theorem in \cite{Ribe76} on dividing points on abelian varieties.
\end{rem}

\vspace{4mm}
\begin{proof}[Proof of Theorem \ref{Thm:RibetVersion}]
By Faltings's theorem on homomorphisms of abelian varieties over number fields, and because we suppose that all endomorphisms of $A$ are defined over $k$, we have a canonical isomorphism $\Hom_k(B, A) \otimes \IQ_\ell \cong \Hom_{\fl^A}(\Vell B, \Vell A)$ By Lemma \ref{Lem:BogoHSSS} we have a canonical isomorphism $H^1(L^M, \Vell A) \cong \Hom_{L^A}(L^M_A, \Vell A)$. Together with Lemma \ref{Lem:DividingInjection} this yields an injection
$$\Hom_{\fl^A}(\Vell B, \Vell A) \cong \Hom_k(B,A)\otimes\IQ_\ell \to \Hom_{\fl^A}(\fl^M_A, \Vell A)$$
We have seen in Lemma \ref{Lem:RibetInclusion} that the inclusion $\fl^M_A\subseteq \fh^M_A \otimes \IQ_\ell \cong \Vnot B\otimes \IQ \cong \Vell B$ holds. Let us then consider the restriction map
$$\Hom_{\fl^A}(\Vell B, \Vell A) \to \Hom_{\fl^A}(\fl^M_A, \Vell A)$$
Because $\Vell A$, $\Vell B$ and $\fl^M_A$ are all semisimple $\fl^A$--modules by Faltings's results, this map is surjective and it is injective if and only if the equality $\fl^M_A = \Vell B$ holds. This is indeed the case, for dimension reasons.
\end{proof}

\vspace{4mm}
\begin{para}\label{Par:ConstrSection}
We now come to the construction of the Lie algebra $\fh^M \subseteq \End(\Vnot M)$ which will be an extension of $\fh^A$ by $\fh^M_A$ as announced in \ref{Par:TrailerfhM}. Let $M=[u:Y\to A]$ be a 1--motive over $k$ where $A$ is an abelian variety, and consider the 1--motive
$$M_+=[u_+:\End_{\overline k}A\otimes Y\to A]$$
given by $u_+(\psi\otimes y) = \psi u(y)$. There is a canonical morphism of 1--motives $M\to M_+$ inducing a diagram
$$\begin{diagram}
\setlength{\dgARROWLENGTH}{4mm}
\node{0}\arrow{e}\node{\Vnot A}\arrow{s,=}\arrow{e,t}{\subseteq}\node{\Vnot M}\arrow{s,J}\arrow{e,t}{p}\node{Y\otimes \IQ}\arrow{s,J}\arrow{e}\node{0}\\
\node{0}\arrow{e}\node{\Vnot A}\arrow{e}\node{\Vnot M_+}\arrow{e,t}{p_+}\node{\End_{\overline k}A\otimes Y\otimes \IQ}\arrow{e}\node{0}\\
\node[4]{\ker u_+\otimes\IQ}\arrow{nw,b}{\natural}\arrow{n,r}{\subseteq}
\end{diagram}
$$
Because the map $u_+$ is a map of $\End_{\overline k}A$--modules, the maps in the lower exact sequence as well as the canonical lift $\natural$ (cf. \ref{Par:ConstructionVnotM}) are maps of $E := \End_{\overline k}A\otimes\IQ$--modules. Because $E$ is a semisimple $\IQ$--algebra (\cite{Mumf70}, IV.19 Theorem 1) we can choose an $E$--module section $s_+$ of $p_+$ extending $\natural$. Denote by $s$ the restriction of $s_+$ to $Y\otimes\IQ$. This $s$ takes values in $\Vnot M$ and is therefore a section of $p$. We now give the definition of $\fh^M$ and proceed then with checking that this definition makes sense.
\end{para}

\vspace{4mm}
\begin{defn}\label{Def:fhM}
Let $s$ be a section of the canonical projection $\Vnot M\to Y\otimes\IQ$ such as constructed in \ref{Par:ConstrSection}. We define $\fh^M$ to be the Lie subalgebra of $\End(\Vnot M)$ consisting of those endomorphisms which are of the form
$$(e,f)_s: v + s(y) \mapsto ev + f(y) \qquad \pt v\in \Vnot A \subseteq \Vnot M, \: y\in Y\otimes\IQ$$
for some $e \in \fh^A$ and some $f\in\fh^M_A \subseteq \Hom(Y\otimes\IQ,\Vnot A)$.
\end{defn}

\vspace{4mm}
\begin{prop}
The set of endomorphisms $\fh^M$ of $\Vnot M$ defined in \ref{Def:fhM} is indeed a Lie subalgebra of $\End(\Vnot M)$. Moreover, $\fh^M$ does not depend on the choice of the section $s$.
\end{prop}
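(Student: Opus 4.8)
The plan is to record an intrinsic description of $\fh^M$ from which both assertions follow. Fix a section $s$ as in \ref{Par:ConstrSection}, so that $\Vnot M=\Vnot A\oplus s(Y\otimes\IQ)$ as $\IQ$--vector spaces. For $e\in\fh^A$ and $f\in\fh^M_A$ one checks that $(e,f)_s$ maps $\Vnot M$ into $\Vnot A$ (equivalently $p\circ(e,f)_s=0$), restricts to $e$ on $\Vnot A$, and satisfies $(e,f)_s\circ s=f$; conversely any $T\in\End(\Vnot M)$ with $p\circ T=0$, $T|_{\Vnot A}\in\fh^A$ and $T\circ s\in\fh^M_A$ is of this form, with $e=T|_{\Vnot A}$ and $f=T\circ s$. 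So first I would prove the description
$$\fh^M=\{\,T\in\End(\Vnot M)\tq p\circ T=0,\ T|_{\Vnot A}\in\fh^A,\ T\circ s\in\fh^M_A\,\}.$$

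Using this, checking that $\fh^M$ is a Lie subalgebra is a short computation. It is a $\IQ$--subspace since $(e,f)_s+(e',f')_s=(e+e',f+f')_s$ and $\lambda(e,f)_s=(\lambda e,\lambda f)_s$. For the bracket, in block form with respect to $\Vnot A\oplus s(Y\otimes\IQ)$ the endomorphism $(e,f)_s$ is $\bigl(\begin{smallmatrix}e&f\\0&0\end{smallmatrix}\bigr)$, so $[(e,f)_s,(e',f')_s]=([e,e'],\,ef'-e'f)_s$. Now $[e,e']\in\fh^A$ because $\fh^A$ is a Lie algebra, and $ef'-e'f\in\fh^M_A$ because for $r=\sum_i\psi_i\otimes y_i\in R$ with $\psi_i\in\End_{\overline k}A$ one has
$$\angl{r,\,ef'-e'f}=\sum_i\psi_i(ef'-e'f)(y_i)=e\sum_i\psi_i f'(y_i)-e'\sum_i\psi_i f(y_i)=e\angl{r,f'}-e'\angl{r,f}=0,$$
using that $e,e'$ centralise $\End_{\overline k}A$ (property (2) of $\fh^A$) and that $f,f'$ annihilate $R$. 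The same computation shows that $\fh^M_A$ is stable under left composition with elements of $\fh^A$; I would note this for the next step.

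For independence of the section, take two sections $s,s'$ as in \ref{Par:ConstrSection}, restrictions to $Y\otimes\IQ$ of $E$--module sections $s_+,s'_+$ of $p_+$ both extending $\natural$. Put $g:=s'-s\colon Y\otimes\IQ\to\Vnot A$ and $g_+:=s'_+-s_+$. Since $g_+$ is $E$--linear and vanishes on $\ker u_+\otimes\IQ\supseteq R$, for $r=\sum_i\psi_i\otimes y_i\in R$ we get $\angl{r,g}=\sum_i\psi_i g(y_i)=g_+(r)=0$, hence $g\in\fh^M_A$. Then for $T\in\fh^M$ (relative to $s$) we have $T\circ s'=T\circ s+T\circ g=T\circ s+e\circ g$ with $e=T|_{\Vnot A}\in\fh^A$, and $e\circ g\in\fh^M_A$ by the stability just noted, so $T\circ s'\in\fh^M_A$ and $T$ lies in $\fh^M$ relative to $s'$ as well. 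By symmetry the two subalgebras coincide.

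I expect the genuinely delicate point to be this last step: for an \emph{arbitrary} linear section of $p$ the difference $g$ need not land in $\fh^M_A$, so $\fh^M$ would depend on the section; it is exactly the passage to $M_+$ and its $\End_{\overline k}A$--module structure that forces $g$ into $\fh^M_A$.
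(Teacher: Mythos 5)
Your proof is correct and follows essentially the same route as the paper: the bracket identity $[(e,f)_s,(e',f')_s]=([e,e'],\,e\circ f'-e'\circ f)_s$ together with stability of $\fh^M_A$ under left composition with $\fh^A$, and independence of the section via showing that the difference of two $E$--module sections of $p_+$ extending $\natural$ vanishes on $R$ and hence lies in $\fh^M_A$. One small improvement on your side: the paper asserts that composing $f\in\fh^M_A$ with \emph{any} endomorphism of $\Vnot A$ lands back in $\fh^M_A$, which as stated is too strong; you correctly isolate that it is precisely property (2) of $\fh^A$ (commutation with $\End_{\overline k}A$) that makes $e\circ f\in\fh^M_A$ for $e\in\fh^A$.
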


\begin{proof}
The set $\fh^M$ is a linear subspace of $\End(\Vnot M)$. In order to show that $\fh^M$ is a Lie subalgebra we must show that $\fh^M$ is closed under taking commutators. Indeed, the formula $[(e,f)_s,(e',f')_s] = ([e,e'], e\circ f'-e'\circ f)_s$ holds, and $e\circ f'-e'\circ f$ is again an element of $\fh^M_A$ because the composition of $f\in \fh^M_A$ with any endomorphism of $\Vnot A$ again belongs to $\fh^M_A$ by definition of $\fh^M_A$. We now show that $\fh^M$ is independent of $s$. Consider again the diagram of \ref{Par:ConstrSection}, let $s_+$ and $t_+$ be $E$--module sections of $p_+$ extending $\natural$ and write $s$ and $t$ for their restrictions to $Y\otimes\IQ$. We claim that the difference $d := s-t:Y\otimes\IQ\to\Vnot A$ belongs to $\fh^M_A$. Indeed, observe that the objects introduced in \ref{Par:RibetPreparation}.c reappear in the diagram of \ref{Par:ConstrSection}, namely
$$\End_{\overline k}A\otimes Y\otimes \IQ = E \otimes Y \qqet \ker u_+\otimes\IQ = R$$
We have $\angl{d,r}=0$ for all $r\in R$ because $s_+$ and $t_+$ are $E$--module maps that coincide on $R$, and that means by definition that $d$ belongs to $\fh^M_A$. From this we can deduce that the Lie algebras constructed as in the definition \ref{Def:fhM} from $s$ and from $t$ respectively are the same. Indeed, the equalities
$$(e,f)_s = (e,f-e\circ d)_t \qqet (e,f)_t = (e,f+e\circ d)_s$$
hold for all $e \in \fh^A$ and all $f\in\fh^M_A \subseteq \Hom(Y\otimes\IQ,\Vnot A)$. We have seen that $d$ belongs to $\fh^M_A$ hence so do $f-e\circ d$ and $f+e\circ d$. That does it.
\end{proof}

\vspace{4mm}
\begin{cor}[To Theorem \ref{Thm:RibetVersion}]\label{Cor:RibetVersionflM}
Let $M=[u:Y\to A]$ be a 1--motive over $k$ where $A$ is an abelian variety and let $\ell$ be a prime number. The Lie algebra $\fl^M$ contains $\fh^M\otimes \IQ_\ell$, and the equality $\fl^M = \fh^M\otimes \IQ$ holds if and only if the equality $\fl^A = \fh^A\otimes \IQ_\ell$ holds.
\end{cor}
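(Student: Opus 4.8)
The plan is to pin down $\fl^M$ completely as a subspace of $\End(\Vell M)$ in terms of the section $s$ of \ref{Par:ConstrSection}, after which the corollary drops out formally. As in \ref{Par:RibetPreparation}(a) none of the Lie algebras $\fl^M,\fl^A,\fh^M,\fh^A,\fh^M_A$ changes under a finite extension of $k$, so I assume $Y$ is constant and all endomorphisms of $A$ are defined over $k$. By \ref{Par:LieAlgExtLadic} every $x\in\fl^M$ preserves $\Vell A$ and acts trivially on $\Vell M/\Vell A\cong Y\otimes\IQ_\ell$, so relative to $s$ it has a unique expression $x=(e,g)_s$ with $e\in\fl^A$ its image in $\fl^A$ and $g\in\Hom(Y\otimes\IQ_\ell,\Vell A)$ its ``off--diagonal part'', characterised by $g(y)=x(s(y))$; and by Theorem \ref{Thm:RibetVersion} the ideal $\fl^M_A$ is exactly the set of $(0,f)_s$ with $f\in\fh^M_A\otimes\IQ_\ell$. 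The target is therefore the statement
\[ \fl^M=\{(e,f)_s\tq e\in\fl^A,\ f\in\fh^M_A\otimes\IQ_\ell\}. \]
Granting this, comparison with $\fh^M\otimes\IQ_\ell=\{(e,f)_s\tq e\in\fh^A\otimes\IQ_\ell,\ f\in\fh^M_A\otimes\IQ_\ell\}$ (Definition \ref{Def:fhM} tensored with $\IQ_\ell$) and with the inclusion $\fh^A\otimes\IQ_\ell\subseteq\fl^A$ (one of the defining properties of $\fh^A$) yields $\fh^M\otimes\IQ_\ell\subseteq\fl^M$ at once, with equality precisely when $\fh^A\otimes\IQ_\ell=\fl^A$ --- the nontrivial implication coming from applying the projection $(e,f)_s\mapsto e$, i.e.\ the surjection $\fl^M\to\fl^A$.

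The inclusion $\supseteq$ in the displayed formula is formal: $\fl^M$ contains $\fl^M_A=\{(0,f)_s\}_{f\in\fh^M_A\otimes\IQ_\ell}$ and surjects onto $\fl^A$, so for each $e\in\fl^A$ it contains some $(e,g)_s$, and then $(e,f)_s=(e,g)_s+(0,f-g)_s\in\fl^M$ for every $f\in\fh^M_A\otimes\IQ_\ell$ as soon as we know that $g\in\fh^M_A\otimes\IQ_\ell$. So everything reduces to the inclusion $\subseteq$, that is, to showing that the off--diagonal part $g$ of an arbitrary $x\in\fl^M$ lies in $\fh^M_A\otimes\IQ_\ell$; equivalently, by \ref{Par:RibetPreparation}(c), that $\angl{r,g}=0$ for every $r\in R$.

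To prove this I would bring in the auxiliary $1$--motive $M_+=[u_+:\End_{\overline k}A\otimes Y\to A]$ of \ref{Par:ConstrSection}. The morphism $M\to M_+$ realises $\Vell M$ as a Galois--stable subspace of $\Vell M_+$ and hence induces a surjection $\fl^{M_+}\twoheadrightarrow\fl^M$, so $x$ lifts to some $\widetilde x\in\fl^{M_+}$ restricting to $x$ on $\Vell M$. The canonical lift $\natural$ of \ref{Par:ConstructionVnotM} takes its values in the Galois--fixed part of $\Vell M_+$ --- in the description of \ref{Par:TellMExplicit} it is spanned by the classes of the pairs $(w,0)$, which Galois fixes because $Y$ is constant --- hence $\natural$ is annihilated by every element of $\fl^{M_+}$. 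Now write a generator $r=\sum_i\psi_i\otimes y_i\in\ker u_+$ of $R$ with $\psi_i\in\End_{\overline k}A$. Since $s_+$ is $\End_{\overline k}A$--linear and extends $\natural$, we have $\natural(r)=\sum_i\psi_i\,s(y_i)$ in $\Vell M_+$, each $\psi_i$ acting through its action on the $1$--motive $M_+$; and since the $\psi_i$ are defined over $k$ they commute with $\fl^{M_+}$, so
\[ 0=\widetilde x\,\natural(r)=\sum_i\psi_i\,\widetilde x\,s(y_i)=\sum_i\psi_i\,x\,s(y_i)=\sum_i\psi_i\,g(y_i)=\angl{r,g}. \]
As $r$ runs over the generators of $R$ this gives $g\in\fh^M_A\otimes\IQ_\ell$, completing the proof of the displayed description of $\fl^M$ and hence of the corollary.

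The hard part is exactly this off--diagonal computation, and the reason $M$ must be replaced by $M_+$ is that the relation $\angl{r,g}=0$ is produced by combining the Galois--invariance of $\natural$ with the $\End_{\overline k}A$--linearity of the section $s_+$ --- the latter being available only after enlarging $Y$ to $\End_{\overline k}A\otimes Y$; note in particular that $\natural(r)=\sum_i\psi_i s(y_i)$ typically does not lie in $\Vell M$, which is why one needs $\widetilde x$ (agreeing with $x$ on $\Vell M$) together with the fact that it commutes with the $\psi_i$ on all of $\Vell M_+$. Everything else --- the reduction to constant $Y$, the bookkeeping with the decomposition $x=(e,g)_s$, and the final comparison with $\fh^M$ --- is routine, and the only input about $\fh^A$ that intervenes is the inclusion $\fh^A\otimes\IQ_\ell\subseteq\fl^A$.
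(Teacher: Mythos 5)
Your proof is correct, but it takes a genuinely different route from the paper's. The paper proceeds cohomologically: it first uses Bogomolov's theorem and Sah's lemma to get $H^1(\fl^A,\Hom(Y\otimes\IQ_\ell,\Vell A))=H^2(\fl^A,\fl^M_A)=0$, then invokes the $H^2$-vanishing to split the Lie-algebra extension $0\to\fl^M_A\to\fl^M\to\fl^A\to 0$ by some section $\sigma$, uses the $H^1$-vanishing to realise the cocycle $e\mapsto\sigma(e)s(\cdot)$ as a coboundary $e.f$, checks $f\in\fl^M_A$, and finally replaces $\sigma$ by $\sigma-(\cdot).f$ so that the corrected section is ``compatible'' with $s$. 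You instead parametrise an arbitrary $x\in\fl^M$ as $(e,g)_s$ and prove directly that the off-diagonal part $g$ annihilates $R$, by lifting $x$ to $\widetilde x\in\fl^{M_+}$ and combining $\End_{\overline k}A$--linearity of $s_+$ with the fact that $s_+|_{\ker u_+\otimes\IQ_\ell}=\natural$ lands in the Galois-fixed part of $\Vell M_+$. This bypasses the cohomological splitting argument entirely (the only cohomological input left is whatever Theorem~\ref{Thm:RibetVersion} itself consumes), and in effect derives the splitting of the Lie-algebra extension as a by-product of the explicit description $\fl^M=\{(e,f)_s : e\in\fl^A,\ f\in\fh^M_A\otimes\IQ_\ell\}$ rather than as a prerequisite. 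The one genuine subtlety both proofs share is the need to work in $\Vell M_+$ rather than $\Vell M$ because $\psi_i s(y_i)$ generally escapes $\Vell M$; you make this explicit by introducing the lift $\widetilde x$ and using that it commutes with the $\psi_i$ on all of $\Vell M_+$, whereas the paper writes $\sigma(x).s_+(\sum_i\psi_i\otimes y_i)$ with $\sigma(x)\in\fl^M$ acting on an element of $\Vell M_+$, leaving the lift to $\fl^{M_+}$ implicit; your version is the more careful formulation of the same key computation. The two approaches buy the same conclusion; yours is shorter and more elementary, the paper's has the side benefit of making the cohomology vanishing explicit (which the author also uses elsewhere).
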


\begin{proof}
Define $M_+$ and choose $s_+$ as in \ref{Par:ConstrSection}, and construct the Lie algebra $\fh^M$ as in Definition \ref{Def:fhM} from this data. We still denote by $s_+$ and by $s$ the $\IQ_\ell$--linear extensions of $s_+$ and $s$, so we have a split short exact sequence of $\IQ_\ell$--vector spaces
\begin{center}
$ $ \xymatrixrowsep{1.5cm} \xymatrixcolsep{1.5cm} \xymatrix{
0 \ar@{->}[r] & {\Vell A}\ar@{->}[r]^{\subseteq}  & {\Vell M}\ar@{->}[r]_{p} & {Y\otimes \IQ_\ell} \ar@/u1.5ex/_{s}[l] \ar@{->}[r]& 0}  $ $
\end{center}
The $\fl^A$--module $\fl^M_A$ can be identified with a submodule of $\Hom(Y\otimes \IQ_\ell,\Vell A) \simeq \Vell A^r$. Since $\Vell A$ is a semisimple $\fl^A$--module by Faltings's results,  $\fl^M_A$ is isomorphic as an $\fl^A$--module to a direct factor of a power of $\Vell A$. Bogomolov's Theorem (\cite{Bogo81}, Theorem 3) and Sah's Lemma imply that 
$$H^i(\fl^A, \Vell A) = 0 \qqet H^i(\fl^A, \Hom(Y\otimes \IQ_\ell,\Vell A)) = 0 \qqet H^i(\fl^A, \fl^M_A) = 0$$
for all $i\geq 0$. The vanishing of $H^2(\fl^A, \fl^M_A)$ implies that the Lie algebra extension given in \ref{Par:LieAlgExtLadic} is split (\cite{Weib94}, theorem 7.6.3), we can therefore choose a splitting $\sigma$ of the projection map $\pi$ as indicated.
\begin{center}
$ $ \xymatrixrowsep{1.5cm} \xymatrixcolsep{1.5cm} \xymatrix{
0 \ar@{->}[r] & {\fl^M_A} \ar@{->}[r]^{\subseteq}  & {\fl^M}\ar@{->}[r]_{\pi} & {\fl^A} \ar@/u1.5ex/_{\sigma}[l] \ar@{->}[r]& 0}  $ $
\end{center}
Using the splittings $s$ and $\sigma$ we fabricate a map $c:\fl^A \to \Hom(Y\otimes \IQ,\Vell A)$ by setting
$$c(x)(v) = \sigma(x)s(v) \qquad \pt x\in\fl^A, v\in Y\otimes \IQ_\ell$$
This map is a cocycle, hence a coboundary because $H^1(\fl^A, \Hom(Y\otimes \IQ_\ell,\Vell A))$ vanishes. So, there exists a $\IQ_\ell$--linear map $f:Y\otimes \IQ_\ell \to \Vell A$ such that 
$$\sigma(e)s(v) = e.f(y) \qquad \pt e\in\fl^A, y\in Y\otimes \IQ_\ell$$
We claim this $f$ belongs to $\fl^M_A$. in order to check this it suffices by Theorem  \ref{Thm:RibetVersion} to show that for all $y_1, \cdots y_n\in Y$ and all $\psi_1, \ldots, \psi_n \in\End_{\overline k}A$ such that $\psi_1u(y_1)+\cdots + \psi_nu(y_n) = 0$ we have $\psi_1f(y_1)+\cdots + \psi_nf(y_n) = 0$. Indeed, we have
$$\sum_{i=1}^n\psi_if(y_i)= \sum_{i=1}^n\psi_i \sigma(x)s(y_i) = \sigma(x).s_+\Big(\sum_{i=1}^n \psi_i \otimes y_i\Big)$$
Here we have used that the $\psi_i$ commute with elements of $\fl^M$ and $\End_{\overline k}A$--linearity of $s_+$. By hypothesis $s_+$ sends elements of $\ker u_+ \otimes \IQ_\ell$ to $(\Vell M)^{\fl^M}$,  hence the right hand side of the above equation is zero. The map $\fl^A \to \fl^M$ given by $x \mapsto \sigma(x)-x.f$ is therefore another section of $\pi$. Let us replace $\sigma$ by this new section. By construction we have now $\sigma(e)s(y) = 0$ for all $e\in\fl^A$ and all $y\in Y\otimes \IQ_\ell$, hence
$$(\sigma(e) + f).(v + s(y)) = ev + f(y) \qquad\pt e\in\fl^A, f\in \fl^M_A, v\in\Vell A, y\in Y\otimes\IQ_\ell$$
Since $\fl^A$ contains $\fh^A\otimes \IQ_\ell$ and $\fl^M_A$ is equal to $\fh^M_A\otimes \IQ_\ell$, this shows that $\fl^M$ contains $\fh^M\otimes \IQ_\ell$, and that the equality $\fl^M = \fh^M\otimes \IQ$ holds if and only if the equality $\fl^A = \fh^A\otimes \IQ_\ell$ holds.
\end{proof}

\vspace{4mm}
\begin{rem}
We have left two important things undiscussed. First, we have only worked with 1--motives whose semiabelian part is an abelian variety. The benefit we had from this was Poincar\'es Reducibility Theorem and semisimplicity of various objects associated with the abelian variety. It would of course be desirable to have a statement as Corollary \ref{Cor:RibetVersionflM} for general 1--motives. Secondly, we have given the Lie algebra $\fh^M$ by an ad hoc construction. This construction should be compared with the Mumford--Tate group associated with the mixed Hodge structure $\Vnot M$, which one may define directly in terms of Tannakian formalism.
\end{rem}

\vspace{14mm}
\section{Some linear algebra}

\begin{par}
The 1--motives we are working with in this section are of the form $M=[Y\to A]$ where $A$ is a geometrically simple abelian variety over $k$. I recall that this means that $A$ has no abelian subvariety defined over $\overline k$ other than $0$ and itself. Our goal is to prove the following technical result.
\end{par}

\begin{prop}\label{Pro:LinearImageEll}
Let $M=[Y\to A]$ be a 1--motive over $k$ where $A$ is a geometrically simple abelian variety, and let $\ell$ be a prime number. The image of the bilinear map
$$\alpha_\ell: (\Vell M)^\ast \times \fl^M \to (\Vell M)^\ast $$
given by $\alpha_\ell(\pi,x) = \pi\circ x$ consists precisely of those linear forms on $\Vell M$ which are zero on the subspace $\ker u\otimes\IQ_\ell$ of $\Vell M$. In particular, the image of $\alpha_\ell$ is a linear subspace of $(\Vell M)^\ast$. 
\end{prop}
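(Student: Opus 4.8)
The plan is to show that $\im\alpha_\ell$ is a linear subspace; once that is known it must coincide with its own linear span, which I compute first. After replacing $k$ by a finite extension — which only replaces $L^M$ by a subgroup of finite index, hence with the same Lie algebra $\fl^M$, and changes neither $\Vell M$ nor $u$ nor $\ker u$ — I may assume $Y$ is constant, all endomorphisms of $A$ are defined over $k$, $u$ lies in the abelian subvariety $B\subseteq\cHom(Y,A)$ of \ref{Par:RibetPreparation}.b, and, after one more finite extension, that $(\Vell M)^{\fl^M}=(\Vell M)^{\Gamma}$; by Proposition \ref{Pro:TateModFixed} the latter equals $W:=\ker u\otimes\IQ_\ell$, which I regard as a subspace of $\Vell M$. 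Every $x\in\fl^M$ kills $W=(\Vell M)^{\fl^M}$, so $\pi\circ x$ vanishes on $W$ for all $\pi,x$ and $\im\alpha_\ell\subseteq W^\perp$; conversely the annihilator of $\im\alpha_\ell$ in $\Vell M$ is $\{v : xv=0\text{ for all }x\in\fl^M\}=W$, so the linear span of $\im\alpha_\ell$ is exactly $W^\perp$. Thus everything reduces to the inclusion $W^\perp\subseteq\im\alpha_\ell$.

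Next I would make $\im\alpha_\ell$ explicit using Corollary \ref{Cor:RibetVersionflM}. In the splitting $\Vell M=\Vell A\oplus s(Y\otimes\IQ_\ell)$ with the section $\sigma\colon\fl^A\to\fl^M$ constructed there, a general $x\in\fl^M$ is $\sigma(e)+f$ with $e\in\fl^A$ and $f\in\fl^M_A\subseteq\Hom(Y\otimes\IQ_\ell,\Vell A)$, and acts by $v+s(y)\mapsto ev+f(y)$; since its image lies in $\Vell A$, dualizing the splitting gives $\pi\circ x=\bigl(\rho\circ e,\ \rho\circ f\bigr)$ for $\pi=(\rho,\chi)$, where $\rho\circ f$ denotes the form $y\mapsto\rho(f(y))$. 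Hence
$$\im\alpha_\ell=\bigcup_{\rho\in(\Vell A)^*}\bigl(\rho\circ\fl^A\bigr)\times\bigl(\rho\circ\fl^M_A\bigr).$$
Reading $(\Vell M)^{\fl^M}$ off in the splitting shows $\bigcap_{f\in\fl^M_A}\ker f=\ker u\otimes\IQ_\ell$, so every $\rho\circ f$ annihilates $\ker u\otimes\IQ_\ell$; writing $\overline Y_\ell^*$ for the space of forms on $Y\otimes\IQ_\ell$ killing $\ker u\otimes\IQ_\ell$, we get $W^\perp=(\Vell A)^*\times\overline Y_\ell^*$. It therefore suffices to produce one $\rho_0\in(\Vell A)^*$ with $\rho_0\circ\fl^A=(\Vell A)^*$ and $\rho_0\circ\fl^M_A=\overline Y_\ell^*$: then $\pi_0\circ\fl^M=(\Vell A)^*\times\overline Y_\ell^*=W^\perp$ for any $\pi_0$ lying over $\rho_0$.

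Each of these two conditions on $\rho$ is the non-vanishing of the top minors of a linear map depending linearly on $\rho$, hence cuts out a Zariski-open subset of $(\Vell A)^*$; as $(\Vell A)^*$ is irreducible it is enough to see that each is nonempty and then to intersect. For the second condition I would use that $A$ is geometrically simple: for a nonzero class $\bar y$ in $(Y\otimes\IQ)/(\ker u\otimes\IQ)$ the evaluation $\mathrm{ev}_{\bar y}\colon B\to A$ is nonzero (else $\bar y$ vanishes by minimality of $B$), hence surjective since $A$ is simple, hence induces a surjection $\Vnot B\to\Vnot A$; since $\fl^M_A=\fh^M_A\otimes\IQ_\ell$ by Theorem \ref{Thm:RibetVersion} and $\fh^M_A$ is canonically $\Vnot B$, it follows that $f\mapsto\rho\circ f$ from $\fh^M_A$ to the space of forms on $Y\otimes\IQ$ killing $\ker u\otimes\IQ$ is surjective for every nonzero $\rho\in(\Vnot A)^*$, so the locus where this map drops rank is a proper closed subscheme defined over $\IQ$ and, $\IQ_\ell$ being infinite, its $\IQ_\ell$-points do not exhaust $(\Vell A)^*$. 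For the first condition I would use the structure of $\fl^A$: by Faltings's semisimplicity theorem $\Vell A$ is a semisimple $\fl^A$-module, so $\fl^A$ is reductive, and together with Faltings's identification $\End_{\fl^A}(\Vell A)=\End_{\overline k}A\otimes\IQ_\ell$ the double centralizer theorem shows the associative envelope of $\fl^A$ is the full commutant of $\End_{\overline k}A\otimes\IQ_\ell$ in $\End(\Vell A)$; since $\fl^A$ moreover contains the scalars (Bogomolov), $\Vell A$ has a cyclic vector for the $\fl^A$-action, which is the required nonemptiness. The one point I have not pinned down, and which I expect to be the heart of the matter, is precisely this last assertion — that a reductive Lie algebra containing the scalars and whose envelope is the full commutant algebra acts on its defining representation with a cyclic vector; here containing the scalars is essential (it rescues cases such as $\mathfrak{so}_n$ on the standard representation), and making it precise will likely need either a direct Lie-algebraic argument or a short case analysis against the possibilities for the pair $(\fl^A,\Vell A)$.
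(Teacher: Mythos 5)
Your reduction to the splitting $\Vell M=\Vell A\oplus s(Y\otimes\IQ_\ell)$, the formula $\im\alpha_\ell=\bigcup_{\rho\in(\Vell A)^*}\bigl(\rho\circ\fl^A\bigr)\times\bigl(\rho\circ\fl^M_A\bigr)$, and the identification of $W^\perp$ are all correct and parallel to what the paper does via \ref{Par:ConstrSection} and Proposition \ref{Pro:LinearImageConcrete}. Your geometric argument for the $\fl^M_A$--part (that $\rho\circ\fh^M_A$ is the full annihilator of $\ker u\otimes\IQ$ for every nonzero rational $\rho$, via surjectivity of $\mathrm{ev}_{y}\colon B\to A$ because $A$ is simple) is sound and is essentially the geometric translation of Proposition \ref{Pro:LinearImageAbstract}, whose proof uses that $\End_{\overline k}(A)\otimes\IQ$ is a division algebra.

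The genuine gap is the step you flag at the end, and it is where you part ways with the paper. You try to find a \emph{single} $\rho_0$ satisfying simultaneously $\rho_0\circ\fl^A=(\Vell A)^*$ and $\rho_0\circ\fl^M_A=\overline Y_\ell^*$. The first condition is equivalent, after transposing, to $\fl^A v_0=\Vell A$ for some $v_0$, i.e.\ to $\Vell A$ being a prehomogeneous vector space for the algebraic group with Lie algebra $\fl^A$. That is a nontrivial assertion about the representation; it is not a formal consequence of reductivity, containing the scalars, and the double-centralizer property, and you have not proved it. More to the point, it is \emph{not needed}: the paper does not demand a single $\rho_0$. For a given target $(\eta_A,\eta_Y)\in W^\perp$ it is enough to let $\rho$ depend on $\eta$. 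If $\eta_A\neq0$, take $\rho=\eta_A$ and $e=\id\in\fl^A$ (the identity lies in $\fl^A$ by Bogomolov's theorem, which is hypothesis (3) on $\fh^A$ and is used in Proposition \ref{Pro:LinearImageConcrete}); then $\rho\circ e=\eta_A$, and one only has to find $f\in\fl^M_A$ with $\rho\circ f=\eta_Y$. If $\eta_A=0$, take $e=0$ and any nonzero $\rho$. This sidesteps your cyclic-vector problem entirely.

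One further point: even for the $\fl^M_A$--part, what is needed in the ``vary $\rho$'' argument is surjectivity of $f\mapsto\rho\circ f$ for \emph{every} nonzero $\rho\in(\Vell A)^*$ over $\IQ_\ell$, not merely for a generic one. Your Zariski-density argument (``a proper closed subscheme over $\IQ$ has $\IQ_\ell$-points that do not exhaust $(\Vell A)^*$'') only yields the existence of \emph{some} good $\rho$, and is adequate for the single-$\rho_0$ strategy but not for the varying-$\rho$ strategy. The paper handles the passage from $\IQ$ to $\IQ_\ell$ at the level of the whole bilinear map, via the compactness-of-projective-space argument in Proposition \ref{Pro:BilinearScalarExt}; that is the device by which it upgrades the rational statement of Proposition \ref{Pro:LinearImageConcrete} to the $\ell$-adic one in the present proposition.
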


\vspace{4mm}
\begin{para}\label{Par:LinalgSetup}
Here is the setup for this section. We fix a finite dimensional division algebra $E$ over $\IQ$, a nontrivial $E$--module $V_1$ of finite rank and a $\IQ$--vector space of finite dimension $V_0$. There is a canonical pairing
$$\angl{-,-}:(E\otimes V_0)\times \Hom(V_0,V_1) \to V_1$$
given by $\angl{\psi\otimes y,f} = \psi f(y)$. Furthermore, we fix an $E$--submodule $R$ of $E\otimes V_0$ and define $\fh_R \subseteq \Hom(V_0,V_1)$ to be the annihilator of $R$ in this pairing. The following proposition remains valid if one replaces $E$ by a finite product of division algebras over $\IQ$ -- the price to pay are more indices.
\end{para}

\vspace{4mm}
\begin{prop}\label{Pro:LinearImageAbstract}
In the situation of \ref{Par:LinalgSetup}, let $\pi$ be a nonzero linear form on $V_1$ and let $v$ be an element of $V_0$. The equality $\pi(f(v)) = 0$ holds for all $f\in \fh_R$ if and only if $1_E\otimes v$ belongs to $R$.
\end{prop}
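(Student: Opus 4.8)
The plan is to recast the statement as a question about $E$--linear maps by means of the tensor--hom adjunction, and then to exploit that $E$ is a division algebra. One implication is immediate: if $1_E\otimes v$ lies in $R$, then by the very definition of $\fh_R$ as the annihilator of $R$ we have $f(v)=\angl{1_E\otimes v,f}=0$ for every $f\in\fh_R$, hence a fortiori $\pi(f(v))=0$.

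For the converse I would argue by contraposition, so assume $1_E\otimes v\notin R$. Regard $E\otimes_\IQ V_0$ as a left $E$--module via left multiplication on the first tensor factor. The assignment $f\mapsto F$, $F(\psi\otimes y):=\psi f(y)$, is then the tensor--hom adjunction isomorphism $\Hom_\IQ(V_0,V_1)\cong\Hom_E(E\otimes_\IQ V_0,V_1)$. Since $R$ is an $E$--submodule, under this isomorphism the annihilator $\fh_R$ corresponds precisely to $\Hom_E(W,V_1)$, where $W:=(E\otimes_\IQ V_0)/R$; moreover, writing $\overline w\in W$ for the image of $1_E\otimes v$, the element $F\in\Hom_E(W,V_1)$ corresponding to $f$ satisfies $F(\overline w)=f(v)$, so that $\pi(f(v))=\pi(F(\overline w))$. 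The hypothesis $1_E\otimes v\notin R$ says exactly that $\overline w\neq 0$.

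It remains to produce an $E$--linear $F\colon W\to V_1$ with $\pi(F(\overline w))\neq 0$. This is where I use that $E$ is a division algebra: $W$ is then a finite--dimensional left $E$--vector space, so the nonzero vector $\overline w$ can be completed to an $E$--basis $\overline w=b_1,b_2,\dots,b_d$ of $W$. Since $\pi\neq 0$, choose $v_1\in V_1$ with $\pi(v_1)\neq 0$, and let $F$ be the $E$--linear map determined by $F(b_1)=v_1$ and $F(b_i)=0$ for $i\geq 2$. Then $\pi(F(\overline w))=\pi(v_1)\neq 0$, and the $f\in\fh_R$ matching $F$ satisfies $\pi(f(v))\neq 0$, which completes the contrapositive. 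I expect the only steps needing a little care to be the bookkeeping of left/right module conventions over the non-commutative algebra $E$, together with the fact --- valid because $E$ is a division ring --- that every $E$--module has a basis, so that $\overline w$ may be taken as a basis vector; everything else is formal. For the stated variant where $E$ is a finite product of division algebras, one decomposes $V_1$, $E\otimes_\IQ V_0$ and $R$ along the central idempotents of $E$ and runs the same argument in each block.
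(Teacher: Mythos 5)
Your proof is correct. Both directions check out: the easy direction is the same as the paper's, and for the converse your use of the tensor--hom adjunction $\Hom_\IQ(V_0,V_1)\cong\Hom_E(E\otimes_\IQ V_0,V_1)$ correctly identifies $\fh_R$ with $\Hom_E(W,V_1)$ for $W=(E\otimes V_0)/R$, reducing the problem to finding an $E$--linear $F:W\to V_1$ with $\pi(F(\overline w))\neq 0$, which exists because $W$ is a free $E$--module (as $E$ is a division algebra) and $\pi\neq 0$.

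The underlying mathematics is the same as the paper's --- both hinge on the freeness of $W$ as an $E$--module --- but the organization is genuinely different and worth comparing. The paper argues directly and in coordinates: it chooses elements $y_1,\dots,y_r\in V_0$ so that the $1_E\otimes y_i$ form an $E$--basis of $W$, completes to a $\IQ$--basis of $V_0$ by adjoining $z_1,\dots,z_s$, writes down explicit generators $r_j$ of $R$, characterizes $\fh_R$ by the linear relations they impose, and finally shows that the coefficients $\rho_i\in E$ appearing in $\pi(f(v))=0$ must vanish (here the division--algebra hypothesis enters: $\pi\circ\rho_i=0$ forces $\rho_i=0$). Your proof makes the same structural fact --- $\fh_R$ is essentially the $E$--linear dual of $W$ valued in $V_1$ --- explicit via the adjunction, and then argues by contraposition: a nonzero $\overline w$ can be sent wherever you like. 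The benefit of your route is that the basis bookkeeping disappears and the proof becomes almost tautological once the right identification is made; the benefit of the paper's route is that it exhibits the relevant element $\rho_i$ concretely, which may be useful if one needs to track constants. Your remark that the division--algebra hypothesis is used precisely to guarantee freeness of $W$ is the right way to see where the paper's $\rho_i=0$ step comes from.
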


\begin{proof}
\begin{par}
If $1_E\otimes v$ belongs to $R$ then $f(v)=0$ for all $f\in\fh_R$ by definition, so the \emph{if} part is obvious. To prove the converse, let us fix an element $v\in V_0$ such that 
$$\pi f(v)=0 \qquad\pt f\in\fh_R$$
We must show that $1_E\otimes v$ belongs to $R$. Let us choose a $\IQ$--basis of $V_0$ as follows. We begin by choosing elements $y_1, \ldots, y_r\in V_0$ such that $1_E\otimes y_1, \ldots, 1_E\otimes y_r$ form an $E$--basis of $(E\otimes V_0)/R$. These elements are $K$--linearly independent, hence we can choose elements  $z_1, \ldots, z_s$ of $V_0$ completing $y_1, \ldots, y_r$ to  basis of $V_0$. There exist unique elements $\psi_{ij}$ of $E$ such that for all $1\leq j \leq s$
$$r_j := 1_E \otimes z_j - (\psi_{j1} \otimes y_1 +\cdots + \psi_{jr}\otimes y_r)$$
belongs to $R$. We claim that a homomorphism $f:V_0\to V_1$ belongs to $\fh_R$ if and only if the relations
$$f(z_j)= \psi_{j1}f(y_1) +\cdots + \psi_{jr}f(y_r) \qquad\qquad \pt 1\leq j \leq s$$
hold. In other words we claim that $f$ belongs to $\fh_R$ if and only if $\angl{r_i,f}=0$ holds for $1\leq j\leq s$. Indeed, since $r_j\in R$, every $f\in \fh_R$ must satisfy $\angl{f,r_j}=0$ by definition. On the other hand, we must show that if $\angl{r_j,f} = 0$ holds for $1\leq j\leq s$, then we have $\angl{r,f} =0$ for all $r\in R$. This is the case because $R$ is $E$--linearly generated by $r_1, \ldots, r_s$. Indeed, we can write every $r\in R$ as $r=\psi_{1j}\otimes y_1 +\cdots + \psi_{rj}\otimes y_r + \varphi_1\otimes z_1 + \cdots + \varphi_s\otimes z_s$. After subtracting $\varphi_1 r_1 + \cdots + \varphi_sr_s$ from $r$ we remain with an element $r'\in R$ of the form $r'=\psi_{1j}'\otimes y_1 +\cdots + \psi_{rj}'\otimes y_r$. But this element can only be zero because the $1_E\otimes y_1, \ldots, 1_E\otimes y_r$ are an $E$--basis of $(E\otimes V_0)/R$.
\end{par}

\begin{par}
In summary, if we want to give an element $f\in\fh \subseteq \Hom(V_0,V_1)$, we may freely choose the values $f(y_1), \ldots, f(y_r) \in V_1$, and must then follow the rules $f(z_j)= \psi_{1j}f(y_1) +\cdots + \psi_{rj}f(y_r)$ to determine the value of $f$ on the remaining basis elements $z_1, \ldots, z_s$.
\end{par}

\begin{par}
Let us write $v=\alpha_1y_1 + \cdots + \alpha_ry_r + \beta_1z_1 + \cdots + \beta_sz_s$ for scalars $\alpha_i$ and $\beta_j\in \IQ$, and define elements $\rho_1, \ldots, \rho_r$ of $E$ by
$$\rho_i := \alpha_i1_E+\beta_1\psi_{1i} + \cdots + \beta_s\psi_{si}$$
for $1\leq i \leq r$. Using these definitions, the relation $\pi(f(v))=0$ becomes
$$0 = \pi\bigg(\sum_{i=1}^r \alpha_i f(y_i) + \sum_{j=1}^s \beta_jf(z_j)\bigg) = \pi\bigg(\sum_{i=1}^r \alpha_i f(y_i) + \sum_{i=1}^r\sum_{j=1}^s \beta_j\psi_{ji}f(y_i)\bigg) = \pi\sum_{i=1}^r \rho_i f(y_i)$$
For every $1\leq i \leq r$ and every $x\in V_1$ there exists an $f\in \fh_R$ such that $f(y_i)=x$ and $f(y_k)=0$ for $k\neq i$. The above relation shows thus in particular that $\pi(\rho_i(x))=0$ for all $x\in V_1$, that is, $\pi\circ \rho_i =0$. Since $\pi$ is nonzero, this means that $\rho_i$ is not invertible, and since $E$ is a division algebra, we find $\rho_i=0$. Thus, the equality 
$$0=\alpha_i1_E \otimes y_i +\beta_1\psi_{1i} \otimes y_i + \cdots + \beta_s\psi_{si} \otimes y_i$$
holds in $E\otimes V_0$ for all $1\leq i \leq r$. Summing over all $i$ yields then
$$0=\sum_{i=1}^r\alpha_i1_E \otimes y_i +\sum_{j=1}^s\beta_j\sum_{i=1}^r\psi_{1i} \otimes y_i = \underbrace{\sum_{i=1}^r\alpha_i1_E \otimes y_i +\sum_{j=1}^s\beta_j1_E \otimes z_j}_{1_E\otimes v} - \sum_{j=1}^s\beta_jr_j$$
Hence $1_E\otimes v = \beta_1r_1 + \cdots + \beta_sr_s$ belongs to $R$, and that is what we wanted to show.
\end{par}
\end{proof}

\vspace{4mm}
\begin{prop}\label{Pro:LinearImageConcrete}
Let $M=[u:Y\to A]$ be a 1--motive over $\overline k$ where $A$ is a simple abelian variety. The image of the bilinear map
$$\alpha_0: (\Vnot M)^\ast \times \fh^M \to (\Vnot M)^\ast $$
given by $\alpha_0(\pi,x) = \pi\circ x$ consists precisely of those linear forms on $\Vnot M$ which are zero on the subspace $\ker u\otimes\IQ$ of $\Vnot M$. In particular, the image of $\alpha_0$ is a linear subspace of $(\Vnot M)^\ast$. 
\end{prop}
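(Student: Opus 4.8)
The plan is to exploit the splitting $\Vnot M=\Vnot A\oplus s(Y\otimes\IQ)$ provided by the section $s$ of \ref{Par:ConstrSection} in order to reduce the assertion to the linear-algebra fact already proved in Proposition \ref{Pro:LinearImageAbstract}. Recall that $s$ restricts to $\natural$ on $\ker u\otimes\IQ$, so that the subspace $\ker u\otimes\IQ\subseteq\Vnot M$ of the statement is precisely $s(\ker u\otimes\IQ)$, and that by Definition \ref{Def:fhM} every $x=(e,f)_s\in\fh^M$ sends $v+s(y)$ to $ev+f(y)\in\Vnot A$; in particular $x(\Vnot M)\subseteq\Vnot A$. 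Consequently $\pi\circ x=(\pi|_{\Vnot A})\circ x$ for every $\pi\in(\Vnot M)^\ast$, so the image of $\alpha_0$ is unchanged if the first argument is restricted to $(\Vnot A)^\ast$; and under the decomposition $(\Vnot M)^\ast\cong(\Vnot A)^\ast\oplus(Y\otimes\IQ)^\ast$ dual to the splitting, the form $\pi'\circ(e,f)_s$ with $\pi'\in(\Vnot A)^\ast$ corresponds to the pair $(\pi'\circ e,\ \pi'\circ f)$.

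With this reformulation, the inclusion of $\im\alpha_0$ into the space of forms vanishing on $\ker u\otimes\IQ$ is immediate: for $f\in\fh^M_A$ the choice $n=1$, $\psi_1=\id$ in Definition \ref{Def:fhMA} shows that $f$ kills $\ker u$, hence $\pi'\circ f$ vanishes on $\ker u\otimes\IQ$, hence $\pi'\circ x$ vanishes on $s(\ker u\otimes\IQ)=\ker u\otimes\IQ$. For the reverse inclusion I would take a form $\lambda$ on $\Vnot M$ vanishing on $\ker u\otimes\IQ$, write it as a pair $(\pi_A,\mu)$ with $\mu\in(Y\otimes\IQ)^\ast$ vanishing on $\ker u\otimes\IQ$, and distinguish two cases: if $\pi_A\neq0$, set $e=\id\in\fh^A$ and $\pi'=\pi_A$; if $\pi_A=0$, set $e=0$ and let $\pi'$ be any nonzero form on $\Vnot A$. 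In either case it remains to produce $f\in\fh^M_A$ with $\pi'\circ f=\mu$.

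This last step is where Proposition \ref{Pro:LinearImageAbstract} is used, applied with $E=\End_{\overline k}A\otimes\IQ$ (a division algebra since $A$ is simple), $V_1=\Vnot A$, $V_0=Y\otimes\IQ$, and $R$ the $E$--submodule of \ref{Par:RibetPreparation}.c, so that $\fh_R=\fh^M_A$. The $\IQ$--linear map $\Phi:\fh^M_A\to(Y\otimes\IQ)^\ast$, $f\mapsto\pi'\circ f$, has image contained in $(\ker u\otimes\IQ)^{\perp}$ by the computation above; to see it is onto, I would pass to orthogonal complements inside the finite-dimensional space $Y\otimes\IQ$. A vector $v\in Y\otimes\IQ$ lies in $(\im\Phi)^{\perp}$ iff $\pi'(f(v))=0$ for all $f\in\fh^M_A=\fh_R$, which, since $\pi'\neq0$, is equivalent by Proposition \ref{Pro:LinearImageAbstract} to $1_E\otimes v\in R$, that is, to $u(v)=0$ in $A(\overline k)\otimes\IQ$, that is, to $v\in\ker u\otimes\IQ$. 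Hence $(\im\Phi)^{\perp}=\ker u\otimes\IQ$, so $\im\Phi=(\ker u\otimes\IQ)^{\perp}$ and in particular contains $\mu$. Putting the two inclusions together identifies $\im\alpha_0$ with the space of linear forms on $\Vnot M$ vanishing on $\ker u\otimes\IQ$; being an annihilator, this is a linear subspace.

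The genuine content is already isolated in Proposition \ref{Pro:LinearImageAbstract}; what remains is bookkeeping. The points that I expect to require a little care are the reduction of the first variable to $(\Vnot A)^\ast$, the degenerate case $\pi_A=0$ (forcing the choice $e=0$ rather than $e=\id$), and the passage to double duals that converts the pointwise criterion of Proposition \ref{Pro:LinearImageAbstract} into surjectivity of $\Phi$. I do not foresee a real obstacle.
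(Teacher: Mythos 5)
Your proof is correct and follows essentially the same route as the paper's: fix the section $s$, decompose $(\Vnot M)^\ast\cong(\Vnot A)^\ast\oplus(Y\otimes\IQ)^\ast$, observe that $\alpha_0$ acts as $((\pi_A,\pi_Y),(e,f)_s)\mapsto(\pi_A\circ e,\pi_A\circ f)$, split into the cases $\eta_A\neq0$ and $\eta_A=0$, and invoke Proposition \ref{Pro:LinearImageAbstract} with $E=\End_{\overline k}(A)\otimes\IQ$, $V_0=Y\otimes\IQ$, $V_1=\Vnot A$, $R$ as in \ref{Par:RibetPreparation}.c. The only difference is cosmetic: you spell out the double-duality step that turns the pointwise criterion of Proposition \ref{Pro:LinearImageAbstract} into surjectivity of $f\mapsto\pi'\circ f$ onto $(\ker u\otimes\IQ)^\perp$, which the paper states without elaboration.
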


\begin{proof}
Let us fix a linear section $s:(Y\otimes \IQ)\to\Vnot M$ such as in the construction of $\fh^M$, so that every element of $\fh^M$ is of the form
$$(e,f)_s: v + s(y) \mapsto ev + f(y) \qquad \pt v\in \Vnot A, \: y\in Y\otimes\IQ$$
for some $e \in \fh^A$ and some $f\in\fh^M_A$. Using this section, every linear form $\pi$ on $\Vnot M$ can be uniquely written as  $\pi = (\pi_A,\pi_Y)$, where $\pi_A$ is a form on $\Vnot A$ and $\pi_Y$ is a form on $Y\otimes \IQ$. With this notation, the map $\alpha_0$ in the proposition becomes
$$\alpha_0: \big((\pi_A,\pi_Y), (e,f)_s\big) \mapsto (\pi_A\circ e, \pi_A\circ f)$$
For every linear form $(\pi_A,\pi_Y)$ on $\Vnot M$, every element $(e,f)_s$ of $\fh^M$ and every $y\in\ker u\otimes \IQ$ we have $(\pi_A\circ e, \pi_A\circ f)_s(0,s(y)) = f(y)=0$ by definition of $\fh^M_A$, so all forms in the image of $\alpha_0$ annihilate $\ker u\otimes \IQ$. On the other hand, let $(\eta_A,\eta_Y)$ be a linear form on $\Vnot M$ such that $\eta_Y(y)=0$ for all $y\in\ker u$. Let us define
$$e := \begin{cases}\id & \mbox{if $\eta_A\neq 0$}\\ 0 & \mbox{if $\eta_A = 0$} \end{cases} \qqet (\pi_A, \pi_Y) := \begin{cases}(\eta_A,0) & \mbox{if $\eta_A\neq 0$}\\ (\pi_A,0) \mbox{ for some $\pi_A\neq 0$} & \mbox{if $\eta_A = 0$} \end{cases}$$
In order to make use of Proposition \ref{Pro:LinearImageAbstract}, we specialise the objects introduced in \ref{Par:LinalgSetup} as follows. We take $E$ to be the $\IQ$--algebra $\End_{\overline k}(A)\otimes \IQ$, which is a division algebra according to \cite{Mumf70}, IV.19 Corollary 2 to Theorem 1. Then $V_1 := \Vnot A$ is an $E$--module of finite rank, and we specialise $V_0 := Y\otimes\IQ$. Finally we let $R$ be the $E$--submodule of $E\otimes (Y\otimes\IQ)$ introduced in \ref{Par:RibetPreparation}.c, so that according to Definition \ref{Def:fhMA} we have $\fh_R=\fh^M_A$. Proposition \ref{Pro:LinearImageAbstract} states that the image of the \emph{linear} map $\fh^M_A \to (Y\otimes \IQ)^\ast$ given by $f\mapsto \pi_A\circ f$ is equal to the annihilator of the subspace $\ker u\otimes \IQ$ of $Y\otimes\IQ$. In particular there exists an element $f\in \fh^M_A$ such that $\pi_A\circ f = \eta_Y$. With this choice of $f$ we have
$$\alpha_0\big((\pi_A,\pi_Y), (e,f)_s\big) = (\pi_A\circ e, \pi_A\circ f) = (\eta_A,\eta_Y)$$
in both cases, $\eta_A=0$ and $\eta_A\neq 0$. This proves the proposition. 
\end{proof}

\vspace{4mm}
\begin{para}
It follows from Theorem \ref{Thm:RibetVersion} (or rather its Corollary \ref{Cor:RibetVersionflM}) that the $\IQ_\ell$--bilinear map in Proposition \ref{Pro:LinearImageEll} is obtained from the $\IQ$--bilinear map of Proposition \ref{Pro:LinearImageConcrete} by extension of scalars. However, it is not clear whether or not the property of a bilinear map to be surjective is invariant under scalar extension. Let $L|K$ be an extension of fields. Given finite dimensional $K$--vector spaces $U,V,W$ and a $K$--bilinear map $\beta_K:U\times V\to W$, denote by $\beta_L$ the $L$--bilinear map obtained from $\beta_K$. Which of the following implications is true (for a fixed field extension $L|K$ and all $K$--bilinear maps $\beta_K$ between finite dimensional $K$--vector spaces)?
$$\mbox{$\beta_K$ is surjective} \quad \overset{a)}{\Longleftarrow} \:\: \overset{b)}{\Longrightarrow} \quad \mbox{$\beta_L$ is surjective}$$
We were unable to find a satisfying answer to this general problem. Our next proposition shows that the implication b) holds for the extension $\IQ_\ell|\IQ$, and that is all we need.
\end{para}

\vspace{4mm}
\begin{prop}\label{Pro:BilinearScalarExt}
Let $V,V'$ and $W$ be $\IQ$--vector spaces and let $\alpha:V\times V'\to W$ be a bilinear map. Let $K$ be either the field of real numbers or the field of $\ell$--adic numbers for some prime number $\ell$. If the image of $\alpha$ is a linear subspace of $W$, then the image of the induced $K$--bilinear map
$$\alpha_K: (V\otimes K) \times (V'\otimes K) \to W \otimes K$$
is a linear subspace of $W\otimes K$, and the equality $\im\alpha_K=\im\alpha\otimes K$ holds.
\end{prop}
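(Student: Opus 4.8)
The plan is to reduce to the case where $W = \im\alpha$, and then to prove the statement by a counting/density argument that works uniformly for $K = \IR$ and $K = \IQ_\ell$. First I would observe that, since $\im\alpha$ is by hypothesis a linear subspace of $W$, we may replace $W$ by $\im\alpha$; this does not change $\im\alpha_K$ except to identify it with a subspace of $\im\alpha\otimes K \subseteq W\otimes K$, and the claim becomes: the bilinear map $\alpha_K$ obtained from a \emph{surjective} $\IQ$--bilinear map $\alpha$ is again surjective. Choosing bases, we may assume $V = \IQ^m$, $V' = \IQ^n$, $W = \IQ^d$, and we are given that the vectors $\{\alpha(e_i, e_j')\}$ together with all their $\IQ$--linear combinations of the special shape $\alpha(v,v')$ span $\IQ^d$; equivalently, the $\IQ$--Zariski closure of the image of the associated polynomial map $\IA^{m}\times\IA^{n}\to\IA^{d}$ is all of $\IA^d$ — no, more precisely, that the image \emph{is} the whole space set-theoretically.

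The key point is the following elementary fact about bilinear maps over an infinite field $k$: for a bilinear map $\beta: k^m\times k^n\to k^d$, the image is a cone (closed under scalar multiplication, since $\beta(\lambda v, v') = \lambda\beta(v,v')$) and contains $0$; and if that cone equals a linear subspace $W_0$, then already a \emph{fixed finite} $k$--linear combination realises every element — indeed, if $\dim W_0 = r$, then for generic choices the map $(v_1,v_1',\dots,v_r,v_r')\mapsto \beta(v_1,v_1')+\cdots+\beta(v_r,v_r')$ is surjective onto $W_0$, because its differential at a suitable point is surjective. The clean way to package this: surjectivity of $\alpha$ onto a linear subspace $W_0$ of dimension $r$ is equivalent to the existence of a single $\IQ$--point $(v_1,v_1',\dots,v_r,v_r')$ at which the $\IQ$--polynomial map $\Phi:\IA^{r(m+n)}\to \IA^d$, $\Phi = \sum_i \alpha(\,\cdot\,,\,\cdot\,)$, is a submersion onto $W_0$ — i.e. the Jacobian of $\Phi$ at that point has rank $r$ and image $W_0$. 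This is because the image of $\Phi$ always lies in $W_0$ (as $W_0 \supseteq \im\alpha$ is linear and $\Phi$ is a sum of values of $\alpha$), is a constructible cone, and hence equals $W_0$ iff it is Zariski-dense iff $\Phi$ is dominant iff the Jacobian attains rank $r$ somewhere; and since $\IQ$ is infinite, if the Jacobian attains rank $r$ over $\overline\IQ$ it does so at a $\IQ$--point, by choosing the point generically within a suitable $\IQ$--subvariety, or simply because the locus where the rank is $<r$ is a proper $\IQ$--subvariety of affine space and $\IQ$ is infinite.

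Now I would run this criterion over $K$. The same polynomial map $\Phi$, viewed over $K$, has image contained in $W_0\otimes K$ (the defining linear equations of $W_0$ are defined over $\IQ$, hence still vanish on $\im\Phi_K$). At the $\IQ$--point $(v_1,v_1',\dots)$ found above, the Jacobian of $\Phi_K$ is the base change of a rank-$r$ matrix over $\IQ$, so it still has rank $r$ and image $W_0\otimes K$. By the inverse/implicit function theorem over $K$ — the archimedean one for $K=\IR$, the $\ell$--adic one for $K = \IQ_\ell$ (both available since $\Phi$ is a polynomial map and $K$ is a complete valued field) — the map $\Phi_K$ is locally surjective onto $W_0\otimes K$ near that point, so $W_0\otimes K \subseteq \im\alpha_K \subseteq W_0\otimes K$, giving $\im\alpha_K = W_0\otimes K = \im\alpha\otimes K$, which is linear. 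This proves the proposition.

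The main obstacle is the argument that surjectivity of $\alpha$ onto a linear subspace forces the Jacobian of the "$r$-fold sum" map $\Phi$ to attain full rank $r$ at a rational point. This is exactly the place where one uses both that $\IQ$ is infinite (so that a proper subvariety cannot contain all rational points) and that the image is genuinely a \emph{linear} subspace and not merely a Zariski-dense cone — for a general dominant polynomial map the image need not be Zariski-closed, but here it is assumed to be the full linear space $\im\alpha$, and a constructible set containing a Zariski-dense open and closed under scalars which is contained in and Zariski-dense in a linear space must be that whole linear space. Once that rational submersion point is secured, the passage to $K$ is a formal application of the implicit function theorem and costs nothing.
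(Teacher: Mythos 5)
Your reduction to the surjective case and the observation that set-theoretic surjectivity of $\alpha$ onto $W_0 := \im\alpha$ over $\IQ$ forces the differential to be onto at some $\IQ$-rational point (dominance plus generic smoothness in characteristic zero, plus $\IQ$ infinite) are both correct and are a reasonable starting point. The genuine gap comes at the final step. The implicit function theorem over $K$ gives that $\im\alpha_K$ contains an open neighbourhood of $\alpha(p)$ in $W_0\otimes K$, and nothing more; the inference ``so $W_0\otimes K\subseteq\im\alpha_K$'' is not justified. The image of a bilinear map is a cone, stable under scaling by $K^\ast$, but a cone containing an open neighbourhood of one nonzero point need not be the whole space: the $\IR^\ast$-cone over a small ball around $(1,0)$ in $\IR^2$ misses $(0,1)$. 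Some global ingredient is required to promote ``contains an open set'' to ``is everything'', and the proposal does not supply one. This is precisely where the paper's proof does real work: it passes to the induced map $\overline\alpha_K:\IP V_K\times\IP V'_K\to\IP W_K$ of projective spaces, whose source is compact because $K$ is a locally compact field; the image of $\overline\alpha_K$ is therefore compact, hence closed, and it contains the dense subset $\IP W$ coming from density of $\IQ$ in $K$, which forces surjectivity. No analogue of that compactness/closedness step appears in your argument.

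There is also a secondary slip: you apply the implicit function theorem to the $r$-fold sum $\Phi_K$, whose image is the set of $r$-fold sums of values of $\alpha_K$ and is in general strictly larger than $\im\alpha_K$. So even if $\Phi_K$ were shown to be surjective onto $W_0\otimes K$, this would not by itself give $\im\alpha_K=W_0\otimes K$, which is what the proposition asserts. Taking $r=1$ removes this particular confusion (the submersion point exists already for $\alpha$ itself, by the dominance argument), but the local-to-global gap described above remains and is the essential missing idea.
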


\begin{proof}
To ease notation let us define $V_K := V\otimes K$ and analogously $V'_K$ and $W_K$. The image of $\alpha_K$ is certainly contained in the $K$--linear subspace $\im\alpha\otimes K$. We may thus, replacing $W$ by $\im\alpha$, suppose without loss of generality that $\alpha$ is surjective. We have to show that $\alpha_K$ is surjective as well. We consider the projective spaces
$$\IP V := (V\setminus \{0\})/\IQ^\ast \qqet \IP V_K := (V_K\setminus \{0\})/K^\ast$$
Because $\IQ$ is dense in $K$, the subset $\IP V$ is dense in $\IP V_K$, and again the same goes for $V'$ and $W$ in place of $V$. The map $\alpha$ induces well defined maps 
$$\overline\alpha : \IP V \times \IP V' \to \IP W \qqet \overline\alpha_K : \IP V_K\times \IP V'_K \to \IP W_K$$
Considering $\IP V \times \IP V'$ as a subset of $\IP V_K\times \IP V'_K$, the map $\overline\alpha$ extends to $\overline\alpha_K$, hence in particular the image of $\overline \alpha$ contains the dense subset $\IP W$ of $\IP W_K$. On the other hand, the topological spaces $\IP V_K$ and $\IP V_K'$ are compact, hence so is their product, and the map $\overline \alpha_K$ is continuous. Thus, the image of $\overline \alpha_K$ must be compact, hence closed, and therefore consist of all of $\IP W_K$. But then, surjectivity of $\alpha_K$ immediately follows from surjectivity of $\overline \alpha_K$.
\end{proof}

\vspace{4mm}
\begin{proof}[Proof of Proposition \ref{Pro:LinearImageEll}]
On one hand, let $\pi$ be a linear form on $\Vell M$ and let $x$ be an element of $\fl^M$. For every $v\in\ker u\otimes\IQ_\ell \subseteq \Vell M$ we have $x.v=0$ and hence $\pi(x.v)=0$. On the other hand, let $\eta$ be a linear form on $\Vell M$ which is trivial on $\ker u \otimes \IQ_\ell$. By Corollary \ref{Cor:RibetVersionflM} the Lie algebra $\fl^M$ contains $\fh^M \otimes \IQ_\ell$, hence it is enough to show that the image of the bilinear map
$$(\Vell M)^\ast \times (\fh^M\otimes \IQ_\ell) \to (\Vell M)^\ast $$
contains all linear forms on $\Vell M \cong \Vnot M \otimes \IQ_\ell$ which are trivial on $\ker u\otimes\IQ_\ell$. Indeed, that follows from Proposition \ref{Pro:LinearImageConcrete} and Proposition \ref{Pro:BilinearScalarExt}.
\end{proof}

\vspace{14mm}
\section{Proof of the Main Theorem}

\begin{par}
For this section we prove our main theorem as announced in the introduction. Our strategy is as follows: Given a geometrically simple abelian variety $A$ over the number field $k$ and a subgroup $X$ of $k$, we consider the group 
$$\overline X := \{P\in A(k) \tq \red_\fp(P) \in \red_\fp(X) \pt \fp\in S\}$$
where $S$ is any fixed set of places of $k$ of density 1 where $A$ has good reduction. The main theorem states that for all $X$ and all $S$ the equality $X=\overline X$ holds. A simple argument will show that in order to prove this equality, it suffices to prove that the quotient  group $\overline X/X$ is torsion free. Since $\overline X/X$ is finitely generated, it is enough to show that for all primes $\ell$ the group $(\overline X/X) \otimes\IZ_\ell$ is torsion free. But then, using Propositions \ref{Pro:InjectionOtimesEll} and \ref{Pro:H1astFundamental} this amounts to show that the group $H^1(L^M, \Tell M)$ is torsion free for a suitable 1--motive $M$. Our program consists now of establishing a general condition ensuring that $H^1_\ast(L,T)$ is torsion free for an $\ell$--adic Lie group $L$ acting on a finitely generated free $\IZ_\ell$--module $T$, and then to show that $L^M$ acting on $\Tell M$ meets this condition.
\end{par}

\vspace{4mm}
\begin{klem}\label{Lem:H1astTorsionFree}
Let $T$ be a finitely generated free $\IZ_\ell$--module, let $L$ be a Lie subgroup of $\GL(T)$ with Lie algebra $\fl$ and set $V:=T\otimes \IQ_\ell$. Suppose that
\begin{enumerate}
 \item The set $\{\pi\circ x\tq x\in\fl, \pi\in V^\ast\}$ is a linear subspace of $V^\ast$
 \item The equality $V^L=V^\fl$ holds.
\end{enumerate}
Then the group $H^1_\ast(L,T)$ is torsion free.
\end{klem}

\vspace{4mm}
\begin{para}
The proof needs some preparation. Let us introduce the following ambulant terminology: Given a finitely generated free $\IZ_\ell$--module $T$ and a Lie subgroup $L \subseteq \GL(T)$ as in the Lemma, we say that $L$ acts \emph{tightly} if the equality
$$\bigcap_{g\in L}\big(T+V^g) = T+V^L$$
where $V := T\otimes\IQ_\ell$. The inclusion $\supseteq$ always trivially holds. More generally, if $V_2$ is another $\IQ_\ell$--vector space we say that a family of linear maps $\Phi \subseteq \Hom(V, V_2)$ is \emph{tight} if the equality
\begin{equation}
\bigcap_{\varphi\in\Phi}\big(T+\ker\varphi\big) = T + \bigcap_{\varphi\in\Phi}\ker\varphi\tag{$\ast$}
\end{equation}
holds. Again the inclusion $\supseteq$ is trivial. So, $L$ acts tightly on $V$ if and only if for $V_2 =V$ the family $\{(g-1_V)\tq g\in L\}$ is tight. The following lemma shows how this is related with the torsion of $H^1_\ast(L,T)$.
\end{para}

\vspace{4mm}
\begin{lem}\label{Lem:TightEtH1ast}
Let $T$ be a finitely generated free $\IZ_\ell$--module, let $L$ be a Lie subgroup of $\GL(T)$ with Lie algebra $\fl$ and set $V:=T\otimes \IQ_\ell$. If $L$ acts tightly on $V$ then the group $H^1_\ast(L,T)$ is torsion free.
\end{lem}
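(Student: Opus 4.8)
The plan is to take a continuous cocycle $c\colon L\to T$ whose class $[c]\in H^1_\ast(L,T)$ is torsion, and to show $[c]=0$; since $H^1(L,T)$ is a $\IZ_\ell$--module, torsion-freeness follows once every such class is killed. The first step is to pass to rational coefficients: a torsion class in $H^1(L,T)$ maps to a torsion class in $H^1(L,V)$, and the latter group, being a $\IQ_\ell$--vector space, has no torsion, so the image of $[c]$ in $H^1(L,V)$ vanishes. As cohomology is computed by continuous cochains and $0$--cochains are just elements of $V$, this means there is a vector $v\in V$ with $c(g)=(g-1)v$ for all $g\in L$.

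The second step exploits the hypothesis $[c]\in H^1_\ast(L,T)$. For each $g\in L$, the restriction of $c$ to the monogenous subgroup topologically generated by $g$ is a coboundary; a coboundary on a procyclic group being determined by the image of a topological generator, there is $t_g\in T$ with $c(g)=(g-1)t_g$. Comparing with $c(g)=(g-1)v$ gives $(g-1)(v-t_g)=0$, i.e. $v-t_g\in V^g$, hence $v\in t_g+V^g\subseteq T+V^g$. Since this holds for every $g\in L$, we obtain $v\in\bigcap_{g\in L}(T+V^g)$, which by the tightness hypothesis equals $T+V^L$.

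The third and final step is immediate: write $v=t+w$ with $t\in T$ and $w\in V^L$; then $(g-1)w=0$, so $c(g)=(g-1)v=(g-1)t$ for all $g\in L$, exhibiting $c$ as the coboundary of an element of $T$. Hence $[c]=0$, and $H^1_\ast(L,T)$ is torsion free.

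I do not expect a genuine obstacle in this lemma: the only points needing care are the standard comparison between continuous--cochain cohomology with $\IZ_\ell$-- and $\IQ_\ell$--coefficients (that a torsion class dies in $H^1(L,V)$, and that a class trivial there comes from an actual vector $v\in V$), and the elementary remark that a coboundary on a procyclic group is pinned down by a topological generator. The real content of the Key Lemma lies in verifying that its two hypotheses force $L$ to act tightly, which is carried out separately using Propositions~\ref{Pro:LinearImageEll} and the structural results of the preceding sections.
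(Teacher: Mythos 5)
Your proof is correct and follows essentially the same route as the paper: pass to $\IQ_\ell$--coefficients to obtain $v\in V$ with $c(g)=(g-1)v$, use membership in $H^1_\ast$ to get $t_g\in T$ with $c(g)=(g-1)t_g$, deduce $v\in\bigcap_g(T+V^g)$, and invoke tightness to replace $v$ by an element of $T$. The only cosmetic difference is that the paper starts from an $\ell$--torsion class and produces $v$ directly as $t/\ell$, whereas you argue more abstractly that torsion dies in the $\IQ_\ell$--vector space $H^1(L,V)$; both are valid.
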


\begin{proof}
Let $c:L\to T$ be a cocycle representing an element of $H^1_\ast(L,T)[\ell]$, and let us show that $c$ is a coboundary. As $\ell c$ is a coboundary, $c$ is a coboundary in $H^1(L,V)$ and there exists an element $v\in V$ such that $c(g) = gv-v$ for all $g\in L$. To say that the cohomology class of $c$ belongs to the subgroup $H^1_\ast(L,T)$ of $H^1(L,T)$ is to say that for all $g\in L$, there exists a $t_g \in T$ such that $c(g) = gt_g-t_g$. We find that
$$(g-1_V)t_g = (g-1_V)v \qquad\pt g\in L$$
or in other words $v-t_g \in\ker(g-1_V)$, that is to say $v\in T+V^g$. This is true for all $g\in L$ and since $L$ acts tightly this implies that $v=t+v_0$ for some $t\in T$ and some $v_0\in V^L$. Hence $c(g) = gt-t$ is a coboundary as needed.
\end{proof}

\vspace{4mm}
\begin{lem}\label{Lem:LinearTight}
Let $V$ and $V_2$ be $\IQ_\ell$--vector spaces with linear duals $V^\ast$ and and $V_2^\ast$ let $\Phi$ be a linear subspace of $\Hom(V,V_2)$. If the set $\Psi:=\{\pi\circ\varphi\tq \varphi\in\Phi, \pi\in V^\ast_2\}$ is a linear subspace of $V^\ast$, then $\Phi$ is tight.
\end{lem}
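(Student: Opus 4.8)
The plan is to strip the notion of tightness down to a statement about the single subspace $\Psi$ and the lattice $T$. Write $W := \bigcap_{\varphi\in\Phi}\ker\varphi$; the inclusion $T + W \subseteq \bigcap_{\varphi\in\Phi}(T+\ker\varphi)$ is automatic, so only the reverse inclusion needs proof. First I would observe that $T + \ker\varphi = \varphi^{-1}(\varphi(T))$ for each $\varphi\in\Phi$, so that $\bigcap_{\varphi\in\Phi}(T+\ker\varphi)$ is exactly the set of $v\in V$ with $\varphi(v)\in\varphi(T)$ for every $\varphi\in\Phi$. For such a $v$ and any $\psi = \pi\circ\varphi\in\Psi$ one has $\psi(v) = \pi(\varphi(v))\in\pi(\varphi(T)) = \psi(T)$; hence every $v$ in that intersection satisfies $\psi(v)\in\psi(T)$ for all $\psi\in\Psi$.

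The crucial point is the next step, which is where the hypothesis on $\Psi$ is used. One identifies $W$ with the annihilator $\Psi^\perp$ of $\Psi$ in $V$ — this is immediate from the definition of $\Psi$, since $\varphi(v)=0$ for all $\varphi\in\Phi$ if and only if $\pi(\varphi(v))=0$ for all $\varphi\in\Phi$ and all $\pi\in V_2^\ast$. Now $V = T\otimes\IQ_\ell$ is finite dimensional and $\Psi$ is by assumption a linear subspace of $V^\ast$, so the double-annihilator identity $(\Psi^\perp)^\perp = \Psi$ applies: $\Psi$ is precisely the subspace of linear forms on $V$ vanishing on $W$. I expect this to be the heart of the matter, and the hypothesis is genuinely indispensable here: if $\Psi$ were only known to contain a finite spanning set of forms (rather than to be a subspace) the conclusion would fail, as one sees already for $V=\IQ_\ell^2$, $T=\IZ_\ell^2$ and the two forms $(x,y)\mapsto x$ and $(x,y)\mapsto x+\ell y$, whose kernels give $\bigcap(T+\ker) = \IZ_\ell\times\ell^{-1}\IZ_\ell$ whereas $W=0$.

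The remaining step is routine linear algebra over $\IZ_\ell$. Choose a $\IZ_\ell$--basis $e_1,\dots,e_d$ of $T$ such that $e_{r+1},\dots,e_d$ is a basis of $T\cap W$; this is possible because $T\cap W$ is a lattice in $W$ and $T/(T\cap W)$ embeds into the $\IQ_\ell$--vector space $V/W$, hence is finitely generated and torsion free, hence free, so $T\cap W$ is a direct summand of $T$. Then $W$ is the $\IQ_\ell$--span of $e_{r+1},\dots,e_d$, so the dual basis vectors $e_1^\ast,\dots,e_r^\ast$ vanish on $W$ and therefore lie in $\Psi$ by the previous step. Given $v$ in the intersection, applying the relation $\psi(v)\in\psi(T)$ with $\psi = e_i^\ast$ for $i\le r$ yields $e_i^\ast(v)\in e_i^\ast(T) = \IZ_\ell$; writing $v = \sum_{j=1}^d a_j e_j$ with $a_j\in\IQ_\ell$, this forces $a_1,\dots,a_r\in\IZ_\ell$, and hence $v = \big(\sum_{i\le r}a_ie_i\big) + \big(\sum_{j>r}a_je_j\big)$ lies in $T + W$. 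Apart from the double-annihilator identity, everything is bookkeeping with bases of free $\IZ_\ell$--modules, so the whole argument is short once the reduction in the first two paragraphs is in place.
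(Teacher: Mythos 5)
Your proof is correct and follows essentially the same route as the paper's: reduce to the family $\Psi$ of scalar-valued forms, invoke the double-annihilator identity (which the paper states more tersely as "Here we use that $\Phi = \Psi$ is a linear subspace") to see that $\Psi$ is exactly the annihilator of $W$, and then split $T$ along a basis adapted to $T\cap W$ and read off integrality of coordinates via the dual basis vectors lying in $\Psi$. The only cosmetic differences are your restatement of $T+\ker\varphi$ as $\varphi^{-1}(\varphi(T))$ and the added counterexample showing the subspace hypothesis is genuinely needed, which is a nice remark but not part of the proof itself.
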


\begin{proof}
In ($\ast$), the inclusion $\supseteq$ holds trivially, we have to show that the inclusion $\subseteq$ holds as well. We have
$$\bigcap_{\varphi\in\Phi}\big(T+\ker\varphi\big) \subseteq \bigcap_{\psi\in\Psi}\big(T+\ker\psi\big) \qqet \bigcap_{\varphi\in\Phi}\ker\varphi = \bigcap_{\psi\in\Psi}\ker\psi$$
Hence, it is enough to show that the lemma holds in the case where $V_2 = \IQ_\ell$ and $\Phi = \Psi$. Write $W$ for the intersection of the kernels $\ker \varphi$, so that
$$W = \{v\in V \tq \varphi(v) = 0 \pt \varphi \in \Phi\} \qqet \Phi = \{\varphi\in V^\ast \tq \varphi(w)=0 \pt w\in W\}$$
Here we use that $\Phi = \Psi$ is a linear subspace of $V^\ast$. Because $T/(T\cap W)$ is torsion free the submodule $W\cap T$ is a direct factor of $T$ (every finitely generated torsion free $\IZ_\ell$--module is free, hence projective), hence we can choose a $\IZ_\ell$--basis $e_1, \ldots, e_s, \ldots, e_r$ of $T$ such that $e_1, \ldots, e_s$ make up a $\IZ_\ell$--basis of $W\cap T$. Let $v$ be an element of $V$ that is contained in $T+\ker\varphi$ for all $\varphi\in \Phi$. We can write $v$ as
$$v = \underbrace{\lambda_1e_1 + \cdots + \lambda_se_s}_{\in W} + \lambda_{s+1}e_{s+1} + \cdots + \lambda_re_r$$
where the $\lambda_i$ are scalars in $\IQ_\ell$. Taking for $\varphi$ the projection onto the $i$--th component for $s<i\leq r$ shows that $\lambda_i \in \IZ_\ell$ for $s<i\leq r$. Hence $\lambda_{s+1}e_{s+1} + \cdots + \lambda_re_r \in T$, and we find that $v \in W+T$ as required.
\end{proof}

\vspace{4mm}
\begin{proof}[Proof of Lemma \ref{Lem:H1astTorsionFree}]
Let $H$ be an open subgroup of $L$ such that the logarithm map is defined on $H$. Such a subgroup always exists, and the exponential of $\log h$ is then also defined and one has $\exp\log h = h$ for all $h\in H$ (\cite{BourLie}, Ch.II, \S8, no.4, proposition 4). The Lie algebra of $H$ is also $\fl$. Let $h$ be an element of $H$ and set $\varphi := \log h$, so that $h=\exp \varphi$. We claim that equality $V^h = \ker \varphi$ holds. On one hand if $hv=v$, then the series
$$\varphi(v) = \log h(v) = (h-1)(v)-\frac{(h-1)^2(v)}{2} + \cdots + (-1)^{n-1}\frac{(h-1)^n(v)}{n} + \cdots$$
is zero, whence $V^h \subseteq \ker\varphi $. On the other hand, if $\varphi(v) = 0$, then the series
$$h(v) = \exp \varphi (v) = 1_V(v) + \varphi(v) + \frac{\varphi^2(v)}{2} + \cdots + \frac{\varphi^n(v)}{n!} + \cdots $$
is trivial except for its first term which is $1_V(v)=v$, whence the inclusion in the other direction. The Lie algebra $\fl$ is a linear subspace of $\End V$ satisfying the hypothesis of Lemma \ref{Lem:LinearTight}. Using this lemma and the hypothesis (2) we find
$$\bigcap_{g\in L}(T+V^g) \:\:\subseteq\:\: \bigcap_{\varphi \in \fl}(T+\ker\varphi) \:\:\overset{\ref{Lem:LinearTight}}{=}\:\: T + V^\fl \:\:=\:\: T + V^L$$
hence $L$ acts tightly on $V$. By Lemma \ref{Lem:TightEtH1ast} this implies that $H^1_\ast(L,T)$ is torsion free as claimed. Mind that in the second intersection it does not matter whether we take the intersection over $\varphi\in \fl$ or $\varphi \in \log(H)$, because every element of $\fl$ is a scalar multiple of an element in $\log(H)$.
\end{proof}

\vspace{4mm}
\begin{cor}\label{Cor:TateModFixed}
Let $M = [u:Y\to A]$ be a 1--motive over a number field $k$ where $Y$ is constant and $A$ is a geometrically simple abelian variety. The group $H^1_\ast(L^M, \Tell M)$ is torsion free.
\end{cor}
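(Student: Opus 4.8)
The plan is to deduce this from the Key Lemma~\ref{Lem:H1astTorsionFree}, applied to the finitely generated free $\IZ_\ell$--module $T=\Tell M$, the Lie subgroup $L=L^M\subseteq\GL(\Tell M)$ with Lie algebra $\fl=\fl^M$, and $V=\Vell M$. It then suffices to verify the two hypotheses of that lemma. Hypothesis~(1), that the set $\{\pi\circ x\tq x\in\fl^M,\ \pi\in(\Vell M)^\ast\}$ is a linear subspace of $(\Vell M)^\ast$, is precisely the content of Proposition~\ref{Pro:LinearImageEll} --- and it is here, through that proposition, that geometric simplicity of $A$ is used, via the fact that $\End_{\overline k}A\otimes\IQ$ is a division algebra. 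So the real work is hypothesis~(2): the identity $(\Vell M)^{L^M}=(\Vell M)^{\fl^M}$.

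To establish~(2) I would first compute the left--hand side. Since $Y$ is constant we have $Y^\Gamma=Y$, so Proposition~\ref{Pro:TateModFixed} gives $(\Tell M)^\Gamma\cong\ker(u\colon Y\to A(k))\otimes\IZ_\ell$, and hence $(\Vell M)^{L^M}=\ker u\otimes\IQ_\ell$. The inclusion $(\Vell M)^{L^M}\subseteq(\Vell M)^{\fl^M}$ is automatic, since a vector fixed by all of $L^M$ is killed by every $x\in\fl^M$ (differentiate $t\mapsto\exp(tx)$ through that vector). Thus everything reduces to proving the reverse inclusion $(\Vell M)^{\fl^M}\subseteq\ker u\otimes\IQ_\ell$.

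For this I would use the explicit description of $\fh^M$ from Definition~\ref{Def:fhM} together with the containment $\fh^M\otimes\IQ_\ell\subseteq\fl^M$ of Corollary~\ref{Cor:RibetVersionflM}. Fix a section $s$ as in~\ref{Par:ConstrSection}, so that $\Vell M=\Vell A\oplus s(Y\otimes\IQ_\ell)$ and every $(e,f)_s$ with $e\in\fh^A$, $f\in\fh^M_A$ annihilates a given $w\in(\Vell M)^{\fl^M}$. Taking $e=\id_{\Vell A}$ (allowed by property~(3) of $\fh^A$) and $f=0$, the operator $(\id,0)_s$ is the projection onto $\Vell A$ with kernel $s(Y\otimes\IQ_\ell)$, so $w\in s(Y\otimes\IQ_\ell)$, say $w=s(y)$. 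Taking $e=0$ and $f\in\fh^M_A$ arbitrary then forces $f(y)=0$ for all $f\in\fh^M_A$ (extended $\IQ_\ell$--linearly). Since $\fh^M_A$ is by Definition~\ref{Def:fhMA} the annihilator of the $\End_{\overline k}A\otimes\IQ$--submodule $R$ of~\ref{Par:RibetPreparation}, the ``$\pi$--free'' direction of Proposition~\ref{Pro:LinearImageAbstract} (used over $\IQ$ with any fixed nonzero $\pi$, and again using that $\End_{\overline k}A\otimes\IQ$ is a division algebra) shows that the common kernel of all $f\in\fh^M_A$ on $Y\otimes\IQ$ equals $\ker u\otimes\IQ$ --- the point being that $1\otimes y\in R$ is equivalent to $u(y)$ being torsion, i.e.\ to $y\in\ker u\otimes\IQ$. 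Extending scalars to $\IQ_\ell$ gives $y\in\ker u\otimes\IQ_\ell$, and hence $w=s(y)$ lies in the canonical copy of $\ker u\otimes\IQ_\ell$ inside $\Vell M$, as wanted.

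I expect the main obstacle to be exactly this last inclusion $(\Vell M)^{\fl^M}\subseteq\ker u\otimes\IQ_\ell$: it is the single place where one genuinely needs the structure theory of $\fl^M$ developed in Section~2 (Theorem~\ref{Thm:RibetVersion} and Corollary~\ref{Cor:RibetVersionflM}) together with the division--algebra input underlying Section~3, whereas hypothesis~(1) is handed to us directly by Proposition~\ref{Pro:LinearImageEll}. If one prefers, the computation of $(\Vell M)^{\fl^M}$ can be carried out purely formally: $(\id,0)_s\in\fl^M$ already gives $(\Vell M)^{\fl^M}\subseteq s(Y\otimes\IQ_\ell)$, and then the action of $\fl^M_A=\fh^M_A\otimes\IQ_\ell$ on $s(Y\otimes\IQ_\ell)\cong Y\otimes\IQ_\ell$ cuts this down to $\ker u\otimes\IQ_\ell$ by Proposition~\ref{Pro:LinearImageAbstract}. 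Once~(2) is in hand, the Key Lemma~\ref{Lem:H1astTorsionFree} immediately yields that $H^1_\ast(L^M,\Tell M)$ is torsion free.
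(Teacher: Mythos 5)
Your proposal is correct, and its overall shape (check the two hypotheses of Key Lemma~\ref{Lem:H1astTorsionFree}, with~(1) coming from Proposition~\ref{Pro:LinearImageEll}) matches the paper. Where you diverge is in how you verify hypothesis~(2), and the paper's route is considerably shorter. The paper observes that $(\Vell M)^{\fl^M}=\bigcup_N(\Vell M)^N$, the union over open normal subgroups $N$ of $L^M$; fixing such an $N$ with preimage $\Gamma'=\Gal(\overline k|k')$, one applies Proposition~\ref{Pro:TateModFixed} over $k$ and over $k'$, and because $Y$ is constant both $(\Tell M)^\Gamma$ and $(\Tell M)^{\Gamma'}$ are equal to $\ker u\otimes\IZ_\ell$. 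That is the whole argument: no section $s$, no $\fh^M_A$, no Proposition~\ref{Pro:LinearImageAbstract}. Your version instead unwinds the Lie--algebra structure of $\fh^M$ and re-derives the identification of $(\Vell M)^{\fl^M}$ with $\ker u\otimes\IQ_\ell$ by hand; the argument is sound (the projection $(\id,0)_s$ cuts $V^{\fl^M}$ into $s(Y\otimes\IQ_\ell)$, and the $f$'s cut it further to $\ker u\otimes\IQ_\ell$ via Proposition~\ref{Pro:LinearImageAbstract} and scalar extension; a dimension count then shows the inclusion $(\Vell M)^{L^M}\subseteq(\Vell M)^{\fl^M}\subseteq s(\ker u\otimes\IQ_\ell)$ is a chain of equalities), but it duplicates work already packaged in Proposition~\ref{Pro:TateModFixed}. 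One further remark: your parenthetical claim that hypothesis~(2) is ``the single place where one genuinely needs the structure theory of $\fl^M$'' inverts the actual division of labour. In the paper it is hypothesis~(1), through Proposition~\ref{Pro:LinearImageEll}, Corollary~\ref{Cor:RibetVersionflM} and Theorem~\ref{Thm:RibetVersion}, that absorbs the Ribet/Bogomolov/Faltings input; hypothesis~(2) needs only the arithmetic finiteness behind Proposition~\ref{Pro:TateModFixed} and the constancy of $Y$.
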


\begin{proof}
We check that the two conditions of Lemma \ref{Lem:H1astTorsionFree} are satisfied. The first condition holds by Proposition \ref{Pro:LinearImageEll}. To check the second condition, we have to show that for every subgroup $N$ of $L^M$ of finite index the equality $(\Vell M)^{L^M} = (\Vell M)^N$ holds. It is enough to show this for normal subgroups, so let us fix a normal subgroup $N$ of $L^M$, and denote by $k'$ the subfield of $\overline k$ fixed by the preimage $\Gamma'$ of $N$ in $\Gamma := \Gal(\overline k|k)$. So $k'$ is a finite Galois extension of $k$, and what we have to show is that the inclusion
$$(\Tell M)^\Gamma \subseteq (\Tell M)^{\Gamma'} $$
is an equality. Indeed, by Proposition \ref{Pro:TateModFixed} and because $Y$ is constant both of these submodules of $\Tell M$ are equal to $(\ker u)\otimes\IZ_\ell$.
\end{proof}

\vspace{4mm}
\begin{proof}[Proof of the Main Theorem]
\begin{par}
We fix a geometrically simple abelian variety $A$ over a number field $k$ with algebraic closure $\overline k$. We also choose a model of $A$ over an open subscheme $U$ of $\spec\cO_k$, which we still denote by $A$, and we fix a set $S$ of closed points of $U$ of density $1$. For every subgroup $X$ of $A(k)$ we define
$$\overline X := \{P\in A(U) \tq \red_\fp(P) \in \red_\fp(X) \pt \fp\in S\}$$
Our aim is to show that for all $X \subseteq A(k)$ the equality $X=\overline X$ holds.
\end{par}

\begin{par}{\bf Claim.}
\emph{It suffices to prove that for all subgroups $X \subseteq A(k)$ the group $\overline X/X$ is torsion free.}
\end{par}

\begin{par}
Indeed, let $X$ be a subgroup of $A(k)$, and let $X'$ be any subgroup of finite index of $A(k)$ containing $X$. Because $X$ is contained in $X'$ the group $\overline X$ is contained in $\overline X'$. Moreover $X'$ is of finite index in $\overline X'$, so if $\overline X'/X'$ is torsion free we must have equality $X'=\overline X'$. Hence, as $X'$ was arbitrary, $\overline X$ is contained in every subgroup of finite index of $A(k)$ which contains $X$. This in turn implies that the equality $X=\overline X$ holds, because $A(k)$ is finitely generated.
\end{par}

\begin{par}
We now fix a subgroup $X$ of $A(k)$ and a prime number $\ell$, and we show that $\overline X/X$ contains no $\ell$--torsion, or equivalently that $(\overline X/X)\otimes \IZ_\ell$ is torsion free. Replacing $U$ by a smaller open subscheme $U'\subseteq U$ and deleting some finitely many elements from $S$ we may suppose without loss of generality that $\ell$ is invertible on $U$. Let us then choose a 1--motive $M = [u:Y\to A]$ over $U$ such that $Y$ is constant and such that $u(Y)=X$. From the propositions \ref{Pro:InjectionOtimesEll}, \ref{Pro:SerreGabber} and \ref{Pro:H1astFundamental} we get a canonical $\IZ_\ell$--linear injections
$$(\overline X/X)\otimes\IZ_\ell \xrightarrow{\:\:\ref{Pro:InjectionOtimesEll}\:\:} H^1_S(\Gamma_U, \Tell M) \xrightarrow{\:\:\ref{Pro:SerreGabber}\:\:} H^1_\ast(\Gamma_U, \Tell M)\xrightarrow{\:\:\ref{Pro:H1astFundamental}\:\:} H^1_\ast(L^M, \Tell M)$$
It is therefore enough to show that $H^1_\ast(L^M, \Tell M)$ is torsion free. But this is guaranteed by Lemma \ref{Lem:H1astTorsionFree} and the hypothesis that $A$ is geometrically simple.
\end{par}
\end{proof}

\vspace{4mm}
\begin{rem}
In the proof we only needed information on the torsion of $H^1_\ast(L^M, \Tell M)$ because of the trick that permitted us to suppose that $X$ is of finite index in $\overline X$. One can show that the group $H^1_\ast(L^M, \Tell M)$ is in fact trivial for such 1--motives. 
\end{rem}

\vspace{4mm}
\begin{par}{\bf Question 1.}
Let $G$ be a semiabelian variety over a number field $k$, let $X$ be a finitely generated subgroup of $G(k)$ and let $P\in G(U)$ be a point. Suppose that for \emph{all} finite places $v$ of $k$, the point $P$ belongs to the closure of $X$ in $G(k_v)$. Does then $P$ belong to $X$? Here, $k_v$ denotes the completion of $k$ at $v$, and we equip $G(k_v)$ with the topology induced by the topology of $k_v$. If $G$ has good reduction at $v$ and if $X$ and $P$ are integral at $v$ (so this concerns all but finitely many places) then to say that $P$ is in the closure of $X$ in $G(k_v)$ is equivalent with saying that $P$ belongs to $X$ modulo $v$, essentially by Hensel's Lemma.
\end{par}

\vspace{4mm}
\begin{par}{\bf Question 2.}
Let $A$ be an abelian variety over a number field $k$, let $X \subseteq A(k)$ be a subgroup of the group of rational points and let $P\in A(k)$ be a rational point. What can one say about the density of the set of places $\fp$ of $k$ with the property that $\red_\fp(P)$ belongs to $\red_\fp(X)$?
\end{par}

\vspace{14mm}
\bibliographystyle{amsalpha}

\begin{thebibliography}{}

\bibitem[BGK05]{Bana05} G.~Banaszak, W.~Gajda, and P.~Kraso{\'n}, \emph{Detecting linear dependence by reduction maps}, Journal of number theory \textbf{115} (2005), no.~2, 322--342.

\bibitem[BK09]{Bana09} G.~Banaszak and P.~Kraso{\'n}, \emph{On arithmetic in {Mordell--Weil} groups}, (to appear)

\bibitem[Bog81]{Bogo81} F.A.~Bogomolov, \emph{Points of finite order on an abelian variety}, Math USSR Izv. \textbf{17} (1981), 55--72.

\bibitem[Bou72]{BourLie} N.~Bourbaki, \emph{Groupes et alg\`ebres de {L}ie}, Diffusion C.C.L.S., 1972.

\bibitem[Del74]{Deli74} P.~Deligne, \emph{Th\'eorie de {Hodge} {III}}, Pub. math. de l'Inst. Hautes \'{E}tudes Scientifiques \textbf{44} (1974), 5--77.

\bibitem[GG09]{Gajd08} W.~Gajda and K.~G{\'o}rnisiewicz, \emph{Linear dependence in {M}ordell--{W}eil groups}, J. reine und angew. Math. (2009), 1--15.

\bibitem[Fal83]{Faltings} G.~Faltings, \emph{Endlichkeitss\"atze f\"ur abelsche {Variet\"aten} \"uber {Zahlk\"orpern}}, Inventiones Math. \textbf{73} (1983), 349--366.

\bibitem[HSz05]{Hara05} D.~Harari and T.~Szamuely, \emph{Arithmetic duality theorems for 1--motives}, J. reine angew. Math. \textbf{578} (2005), 93--128.

\bibitem[Hin88]{Hind88} M.~Hindry, \emph{Autour d'une conjecture de Serge Lang}, Inventiones Math. \textbf{94} (1988), 575--603.

\bibitem[JR87]{Jacq87} O.~Jacquinot and K.~Ribet, \emph{Deficient points on extensions of abelian varieties by $\mathbb{G}_m$}, J. Number Theory \textbf{25} (1987), no.~2, 133--151.

\bibitem[JP09]{JPer09} P.~Jossen, A.~Perucca,  \emph{A counterexample to the problem}, Comptes rendus acad. sci. {\bf 348} (2010), pp. 9--10

\bibitem[Kow03]{Kowa03} E.~Kowalski, \emph{Some local--global applications of Kummer theory}, Manuscripta math. \textbf{111} (2003), no.~1, 105--139.

\bibitem[Mil08]{MilneADT} J.S. Milne, \emph{Arithmetic duality theorems, 2nd ed.}, Academic Press, 2006.

\bibitem[Mum70]{Mumf70} D.~Mumford, \emph{Abelian varieties, 2nd ed.}, Oxford University Press, 1970.

\bibitem[Neu99]{Neukirch99} J.~Neukirch, \emph{Algebraic number theory}, Grundl. math. Wiss., vol. 322, Springer, 1999.

\bibitem[Per08]{Peru08} A.~Perucca, \emph{On the problem of detecting linear dependence for products of abelian varieties and tori}, arXiv:math/0811.1495v1 (2008), 1--9, to appear in Acta Arith.

\bibitem[Rib76]{Ribe76} K.~Ribet, \emph{Dividing rational points on abelian varieties of {CM}--type}, Compositio math. \textbf{33}, (1976), no.~1,  69--74.

\bibitem[Rib87]{Ribe87} \bysame, \emph{Cohomological realization of a family of 1--motives}, Journal of Number Theory \textbf{25} (1987), no.~2, 152--161.

\bibitem[Sch75]{Schin75} \bysame, \emph{On power residues and exponential congruences}, Acta Arithmetica \textbf{27} (1975), 397--420.

\bibitem[Ser64]{Serre1} J-P.~Serre, \emph{Sur les groupes de congruence des vari\'et\'es ab\'eliennes}, Izv. Akad. Nauk. SSSR \textbf{28} (1964), 3--18.

\bibitem[Sza09]{Szam09} T. Szamuely, \emph{Galois groups and fundamental groups}, Cambridge studies in adv. math. vol. 117, Cambridge University Press, 2009.
  
\bibitem[Wei94]{Weib94} C.A.~Weibel, \emph{An introduction to homological algebra}, Cambridge University Press, 1994.

\bibitem[Wes03]{West03} T.~Weston, \emph{Kummer theory of abelian varieties and reduction of Mordell--Weil groups}, Acta Arith. \textbf{110} (2003), 77--88.

\end{thebibliography}

\providecommand{\bysame}{\leavevmode\hbox to3em{\hrulefill}\thinspace}

\providecommand{\href}[2]{#2}

\vspace{15mm}
$$ $$
\hspace{70mm}
\begin{minipage}[]{80mm}
Peter Jossen\\[1mm]
Fakult\"at f\"ur Mathematik\\
Universit\"at Regensburg\\
Universit\"atsstr. 31\\
93040 Regensburg, GERMANY\\[2mm]
{\tt peter.jossen@gmail.com} 
\end{minipage}

\end{document}